\apptocmd{\sloppy}{\hbadness 10000\relax}{}{} 
\newcommand{\A}{{\mathbb A}}
\newcommand{\B}{B}
\newcommand{\Q}{{\mathbb Q}}
\newcommand{\Z}{{\mathbb Z}}
\newcommand{\R}{{\mathbb R}}
\newcommand{\C}{{\mathbb C}}
\newcommand{\N}{{\mathbb N}}
\newcommand{\F}{{\mathbb F}}
\newcommand{\bs}{\backslash}
\newcommand{\p}{\mathfrak p}
\newcommand{\OF}{{\mathfrak o}}
\newcommand{\GL}{{\rm GL}}
\newcommand{\PGL}{{\rm PGL}}
\newcommand{\SL}{{\rm SL}}
\newcommand{\Sp}{{\rm Sp}}
\newcommand{\GSp}{{\rm GSp}}
\newcommand{\PGSp}{{\rm PGSp}}
\newcommand{\St}{{\rm St}}
\renewcommand{\sc}{{\sf sc}}
\newcommand{\triv}{{\mathbf1}}
\newcommand{\K}[1]{{\rm K}(\p^{#1})}
\newcommand{\Ind}{{\rm Ind}}
\newcommand{\mat}[4]{{\setlength{\arraycolsep}{0.5mm}\begin{bsmallmatrix}#1&#2\\#3&#4\end{bsmallmatrix}}}
\newcommand{\forget}[1]{}
\def\qdots{\mathinner{\mkern1mu\raise0pt\vbox{\kern7pt\hbox{.}}\mkern2mu
		\raise3.4pt\hbox{.}\mkern2mu\raise7pt\hbox{.}\mkern1mu}}
\newtheorem{lemma}{Lemma}[section]
\newtheorem{theorem}[lemma]{Theorem}
\newtheorem{corollary}[lemma]{Corollary}
\newtheorem{proposition}[lemma]{Proposition}
\newtheorem{definition}[lemma]{Definition}
\newcommand\blfootnote[1]{%
	\begingroup
	\renewcommand\thefootnote{}\footnote{#1}%
	\addtocounter{footnote}{-1}%
	\endgroup
}
\newcommand\appendix@section[1]{%
	\refstepcounter{section}%
	\orig@section*{Appendix \@Alph\c@section: #1}%
	\addcontentsline{toc}{section}{Appendix \@Alph\c@section: #1}%
}
\g@addto@macro\appendix{\let\section\appendix@section}
\let\orig@section\section
\def\thickhline{%
	\noalign{\ifnum0=`}\fi\hrule \@height \thickarrayrulewidth \futurelet
	\reserved@a\@xthickhline}
\def\@xthickhline{\ifx\reserved@a\thickhline
	\vskip\doublerulesep
	\vskip-\thickarrayrulewidth
	\fi
	\ifnum0=`{\fi}}
\newcommand\KLfour{\Gamma_0'(4)}
\newcommand\KLN{\Gamma_0'(N)}
\newcommand\middlefour{M(4)}
\newcommand\Ekone{E_{k,1}}
\newcommand\Efone{E_{4,1}}
\renewcommand\N{{\mathbb N}}
\newcommand\Span{\operatorname{Span}}
\newcommand\Jac[2]{{\rm J}_{#1,#2}}
\newcommand\Jmero[2]{{\rm J}_{#1,#2}^{\rm mero}}
\newcommand\UHP{{\mathcal H}} % upper half plane
\newcommand\smallpmat[4]{\left(\begin{smallmatrix}
{#1}&{#2}\\{#3}&{#4}\end{smallmatrix}\right)}
\def\Grit{\operatorname{Grit}}
\newcommand\inv{^{-1}}
\newcommand{\zzeta}{y}
\title{Dimension formulas for Siegel modular forms of level $4$}
\author{Manami Roy, Ralf Schmidt, and Shaoyun Yi}
\date{}
\begin{document}
	
\maketitle
\blfootnote{2020 Mathematics Subject Classification: Primary 11F46, 11F70. \\ \hspace*{0.2in } Key words and phrases. Cuspidal automorphic representations, Siegel modular forms, dimension formulas, Satake compactification.}

\vspace{-10ex}
\begin{center}
 with an appendix by Cris Poor and David S.\ Yuen
\end{center}

\vspace{2ex}
\begin{abstract}
We prove several dimension formulas for spaces of scalar-valued Siegel modular forms of degree $2$ with respect to certain congruence subgroups of level $4$. In case of cusp forms, all modular forms considered originate from cuspidal automorphic representations of $\GSp(4,\A)$ whose local component at $p=2$ admits non-zero fixed vectors under the principal congruence subgroup of level $2$. Using known dimension formulas combined with dimensions of spaces of fixed vectors in local representations at $p=2$, we obtain formulas for the number of relevant automorphic representations. These in turn lead to new dimension formulas, in particular for Siegel modular forms with respect to the Klingen congruence subgroup of level~$4$.
\end{abstract}

\tableofcontents
\section{Introduction}
In this work we consider dimension formulas for spaces of scalar-valued Siegel modular forms of degree $2$, weight $k$ and level dividing $4$. The notion of level is ambiguous; for example, level $4$ could refer to modular forms with respect to the paramodular group $K(4)$, the Siegel congruence subgroup $\Gamma_0(4)$, the Klingen congruence subgroup $\Gamma_0'(4)$, or others. We consider the following 11 congruence subgroups of $\Sp(4,\Q)$, all of which are in some sense level $1$, $2$ or $4$:

\vspace{-0.5ex}
\begin{equation}\label{congruencesubgroupseq}
\raisebox{-14ex}{
\begin{tikzpicture}[auto]
%  \path (0,0) node[minimum size=2.8cm,shape=circle,fill=yellow,text width=2.2cm,align=center]
%            (G) {geometric objects}
%        (5,0) node[minimum size=2.8cm,shape=circle,fill=green,text width=2.2cm,align=center]
%            (L) {$L$-functions}
%        (10,0) node[minimum size=2.8cm,shape=circle,fill=yellow,text width=2.2cm,align=center]
%            (A) {automorphic objects};
%
 \node (Sp4Z)      at (0,0) {$\Sp(4,\Z)$};
 \node (Gamma02)   at (-1.5,-1.2) {$\Gamma_0(2)$};
 \node (Gamma04)   at (-3,-2.4) {$\Gamma_0(4)$};
 \node (Gamma04s)  at (-3,-3.6) {$\Gamma^*_0(4)$};
 \node (Gammap02)  at (1.5,-1.2) {$\Gamma_0'(2)$};
 \node (Gammap04)  at (3,-2.4) {$\Gamma_0'(4)$};
 \node (B2)        at (0,-2.4) {$B(2)$};
 \node (Gamma2)    at (0,-4) {$\Gamma(2)$};
 \node (K2)        at (3,0) {$K(2)$};
 \node (K4)        at (6,0) {$K(4)$};
 \node (M4)        at (4.5,-1.2) {$M(4)$};

 \draw (Gamma04s) to node {$\scriptscriptstyle2$} (Gamma04) to node {$\scriptscriptstyle8$} (Gamma02) to node {$\scriptscriptstyle15$} (Sp4Z);
 \draw (Gamma02) to node [swap] {$\scriptscriptstyle3$} (B2) to node [swap] {$\scriptscriptstyle16$} (Gamma2);
 \draw (Sp4Z) to node {$\scriptscriptstyle15$} (Gammap02) to node {$\scriptscriptstyle3$} (B2);
 \draw (K2) to node [swap] {$\scriptscriptstyle3$} (Gammap02) to node [swap] {$\scriptscriptstyle8$} (Gammap04);
 \draw (K4) to node {$\scriptscriptstyle3$} (M4) to node {$\scriptscriptstyle2$} (Gammap04);
\end{tikzpicture}
}
\end{equation}
The connecting lines indicate inclusions (with the bigger group on top), and their labels show indices. The group $\Gamma(2)$ is the principal congruence subgroup of level $2$, $B(2)$ is the Borel congruence subgroup of level $2$, $\Gamma_0^*(4)$ is a certain subgroup of index $2$ in $\Gamma_0(4)$, and $M(4)$ is the ``middle'' group, which lies between $\Gamma_0'(4)$ and $K(4)$. For precise definitions, see Table~\ref{congruencesubgroupstable} in the notations section.

For many of the subgroups $\Gamma$ in \eqref{congruencesubgroupseq} the dimension of the space of Siegel modular forms $M_k(\Gamma)$ and the subspace of cusp forms $S_k(\Gamma)$ is known; Table~\ref{historytable} gives some references. In case a conjugate of $\Gamma$ lies between $\Gamma(2)$ and $\Sp(4,\Z)$, there is a well-known method based on Igusa's classic paper \cite{Igusa1964}. Theorem~2 of \cite{Igusa1964} gives the character of the representation of $\Sp(4,\Z/2\Z)\cong S_6$ on $M_k(\Gamma(2))$. Using standard character theory, one can thus easily calculate $\dim M_k(\Gamma)$. This method works for all groups in \eqref{congruencesubgroupseq} except $K(2)$ and $\Gamma_0'(4)$. The results are summarized in Table~\ref{dimMktable}. All the dimension formulas in this paper are packaged into generating series like $\sum_{k=0}^\infty\dim M_k(\Gamma)t^k$.

The result for $K(2)$ in Table~\ref{dimMktable} is taken from the literature. The result for $\Gamma_0'(4)$, which follows from our considerations using automorphic representation theory, is new. In fact, calculating $\dim M_k(\Gamma_0'(4))$ and $\dim S_k(\Gamma_0'(4))$ provided the original motivation for the present work.

At least for $k\geq6$ the codimension $\dim M_k(\Gamma)-\dim S_k(\Gamma)$ can be determined from the cusp structure of the Satake compactification and Satake's characterization of the image of the global $\Phi$-map. In degree $2$ the method distills down to a simple formula, which we record in Theorem~\ref{thmcodim}. We thus obtain codimension formulas for all the groups in \eqref{congruencesubgroupseq}; see Table~\ref{codimensionstable}. Together with the information about $\dim M_k(\Gamma)$ in Table~\ref{dimMktable}, we get the dimension formulas for $\dim S_k(\Gamma)$ in Table~\ref{dimSktable} for all $\Gamma$ except $\Gamma_0'(4)$.

To obtain further results we consider the automorphic representations $\pi$ generated by the eigenforms in $S_k(\Gamma)$ for $\Gamma$ in \eqref{congruencesubgroupseq}. If we factor an irreducible such $\pi$ into local representations $\pi\cong\otimes\pi_v$ with irreducible, admissible representations $\pi_v$ of $\GSp(4,\Q_v)$, then $\pi_\infty$ is a ``holomorphic'' representation of lowest weight $k$, and $\pi_p$ is unramified for all primes $p\geq3$. If $\Gamma$ is not equal to $\Gamma_0'(4)$, then $\Gamma$ contains a conjugate of $\Gamma(2)$, and consequently $\pi_2$ will have non-zero fixed vectors under the local principal congruence subgroup $\Gamma(\p)$, where $\p=2\Z_2$. A complete determination of such $\pi_2$, which are also known as representations with non-zero hyperspecial parahoric restriction, has been achieved in \cite{Rosner2016}. We reproduce the list of irreducible, admissible representations of $\GSp(4,\Q_2)$ with non-zero hyperspecial parahoric restriction in Table~\ref{parahoricrestrictiontable}. They are organized into types I, IIa, IIb,~\ldots. We let $S_k(\Omega)$ be the set of cuspidal, automorphic representations which are holomorphic of weight~$k$ at the archimedean place, unramified outside $2$, and are of type $\Omega$ with non-zero hyperspecial parahoric restriction at $p=2$; see Definition~\ref{Skomegadef} for more details. We note that $S_k(\Omega)$ is a finite set; see~\cite{BorelJacquet1979}.

A key result which allows us to get information about $\Gamma_0'(4)$ is \cite[Lemma 4]{Yi2021}. It implies that if an irreducible, admissible representation of $\GSp(4,\Q_2)$ has non-zero $\Gamma_0'(\p^2)$-invariant vectors, then it also has non-zero $\Gamma(\p)$-invariant vectors, and hence appears in Table~\ref{parahoricrestrictiontable}. Therefore, eigenforms in $S_k(\Gamma_0'(4))$ also generate elements of $S_k(\Omega)$ for some $\Omega$. Conversely, given an automorphic representation $\pi\cong\otimes\pi_v$ in $S_k(\Omega)$, and a non-zero vector in $\pi_2$ invariant under the local congruence subgroup $C$ analogous to $\Gamma$ for some $\Gamma$ in \eqref{congruencesubgroupseq}, we can construct an element of $S_k(\Gamma)$ by ``descending'' to the Siegel upper half space $\UHP_2$. We thus get the relation
\begin{equation}\label{dimSkdCOmegaeq2}
 \dim S_k(\Gamma)=\sum_\Omega s_k(\Omega)d_{C,\Omega},
\end{equation}
where $s_k(\Omega)=|S_k(\Omega)|$ and $d_{C,\Omega}$ is the common dimension of the space of $C$-invariant vectors in representations of type $\Omega$ occuring in Table~\ref{parahoricrestrictiontable}. The numbers $d_{C,\Omega}$ can all be calculated and are listed in Table~\ref{localdimtable}.

Observe that \eqref{dimSkdCOmegaeq2} is a system of linear equations relating the $\dim S_k(\Gamma)$ for all $\Gamma$ and the $s_k(\Omega)$ for all $\Omega$. Recall that the $\dim S_k(\Gamma)$ are already known for all $\Gamma$ except $\Gamma_0'(4)$. Essentially now what happens is that as $\Gamma$ runs through the subgroups in \eqref{congruencesubgroupseq} except $\Gamma_0'(4)$ the system \eqref{dimSkdCOmegaeq2} provides enough equations in order to determine the $s_k(\Omega)$. Once these are known we use \eqref{dimSkdCOmegaeq2} again, this time for $\Gamma=\Gamma_0'(4)$, to determine $\dim S_k(\Gamma_0'(4))$.

In full detail the situation is slightly more complicated because the system \eqref{dimSkdCOmegaeq2} has more unknowns than equations. This hurdle is overcome by exploiting that automorphic representations of $\GSp(4,\A)$ are categorized into six different kinds of Arthur packets. In Proposition~\ref{YQBzeroprop} we will prove that only packets of ``general type'', denoted by \textbf{(G)}, and packets of Saito-Kurokawa type, denoted by \textbf{(P)}, are relevant. Considering the \textbf{(G)} version and the \textbf{(P)} version of \eqref{dimSkdCOmegaeq2} separately reduces the number of equations and makes the method work. As a byproduct we obtain refined dimension formulas for the spaces of non-lifts $S^{\text{\bf(G)}}_k(\Gamma)$ and the spaces of lifts $S^{\text{\bf(P)}}_k(\Gamma)$ (see Sect.~\ref{Arthursec} for a more precise definition of these spaces). We remark that we included the groups $\Gamma_0^*(4)$ and $M(4)$ in our list \eqref{congruencesubgroupseq} in order to obtain two more linear equations; without these the system 
\eqref{dimSkdCOmegaeq2} would still be underdetermined.

There are only two spaces which are not accessible with the above methods, namely $M_2(\Gamma_0'(4))$ and $M_4(\Gamma_0'(4))$. Their dimensions have been determined by Cris Poor and David S.\ Yuen in Appendix~\ref{CDappendix}.

As mentioned above, many dimension formulas for Siegel modular forms are already contained in the literature. The new contributions of the present work are as follows.
\begin{itemize}
 \item Siegel modular forms for the groups $\Gamma_0^*(4)$ and $M(4)$ have not previously received much attention in the literature. (See however the ``paramodular groups with level'' defined in \cite{Fabien2009thesis}.)
 \item The dimension formulas for $\Gamma_0'(4)$ are new. Until now, the literature only contains dimension formulas for $\Gamma_0'(p)$ where $p$ is prime; see \cite{Ibukiyama1984}, \cite{HashimotoIbukiyama1985}, \cite{Ibukiyama2007}, \cite{Wakatsuki2013}.
 
 \item We obtain the refined dimension formulas for the spaces of lifts $S^{\text{\bf(P)}}_k(\Gamma)$ and non-lifts $S^{\text{\bf(G)}}_k(\Gamma)$.
 \item We obtain formulas for $s_k(\Omega)$, the number of cuspidal automorphic representations of \linebreak $\PGSp(4,\A)$ of weight $k$, unramified outside $2$, and with a representation of type $\Omega$ at $p=2$ admitting non-zero $\Gamma(\p)$-invariant vectors.
\end{itemize}
The paper is organized as follows. In Sect.~\ref{localdimsec} we collect the necessary facts from local representation theory. The main outcomes are Table~\ref{parahoricrestrictiontable}, the complete list of all relevant representations, and Table~\ref{localdimtable}, which contains the dimensions of the spaces of fixed vectors in these representations under all relevant local congruence subgroups. In Sect.~\ref{dimcodimsec}, which is largely independent from Sect.~\ref{localdimsec}, we first utilize Satake's method to obtain codimension formulas for all $\Gamma$ in \eqref{congruencesubgroupseq}. Combined with Igusa's result we thus obtain dimension formulas for $M_k(\Gamma)$ and $S_k(\Gamma)$ for all $\Gamma$ in \eqref{congruencesubgroupseq} except $K(2)$ and $\Gamma_0'(4)$. The formulas for $K(2)$ are already known, and the ones for $\Gamma_0'(4)$ will follow as a consequence of our other results. Sect.~\ref{Sect:countingskpO} begins with a review of Arthur packets for $\GSp(4)$. We make the connection between Siegel modular forms and representations, resulting in the system of linear equations \eqref{dimSkdCOmegaeq2}. We then derive the numbers $s_k(\Omega)$, first for Saito-Kurokawa lifts, then for representations of general type. Finally, as an application, we obtain the desired dimension formulas for $\Gamma_0'(4)$.

Most of our results are summarized in table form in Appendix~\ref{TablesAppendix}. More precisely, Table~\ref{dimMktable} and~\ref{dimSktable} contain dimension formulas for $M_k(\Gamma)$ and $S_k(\Gamma)$, respectively. Table~\ref{dimSkPtable} and \ref{dimSkGtable} are for dimension formulas of $S^{\text{\bf(P)}}_k(\Gamma)$ and $S^{\text{\bf(G)}}_k(\Gamma)$, respectively. Table~\ref{skPOmegatable} and \ref{skGOmegatable} contain formulas for $s^{\text{\bf(P)}}_k(\Omega)$ and $s^{\text{\bf(G)}}_k(\Omega)$, respectively. Here, $s_k^{(*)}(\Omega)=|S_k^{(*)}(\Omega)|$ (see Sect.~\ref{Arthursec}).
 Tables~\ref{Mklowweightstable}-\ref{SkGlowweightstable} and Table~\ref{SkOmegalowweightstable} provide numerical examples for weight $k\le 20$. Appendix~\ref{CDappendix}, provided by Cris Poor and David S.~Yuen, fills the final gap by calculating $\dim M_k(\Gamma_0'(4))$ for $k=2$ and $k=4$.

\vspace*{0.1in}
\textbf{Acknowledgements.} We would like to thank Tomoyoshi Ibukiyama and Cris Poor for providing helpful comments. We would also like to thank the referee for the careful revision and a number of insightful suggestions. Shaoyun Yi is supported by the Fundamental Research Funds for the Central Universities (No. 20720230025).

\section*{Notation and preliminaries}

The symbols $\Z$, $\Q$, $\R$ have the usual meaning. The symbol $\Q_p$ stands for the field of $p$-adic numbers. We will write $\mathbb{F}_p$ for the field with $p$ elements; only $\mathbb{F}_2$ is needed in this work.

Let $J$ be a $4\times4$ anti-symmetric matrix over a field $F$. We consider the symplectic similitude group
\begin{equation}\label{GSp4_symmetric_version}
 G=\GSp(4) \colonequals \{g\in\GL(4)\colon \:^tgJg=\lambda(g)J,\:\lambda(g)\in \GL(1)\},
\end{equation}
which is an algebraic $F$-group. The function $\lambda$ is called the multiplier homomorphism. The kernel of this function is the symplectic group $\Sp(4)$. Let $Z$ be the center of $\GSp(4)$ and $\PGSp(4):=\GSp(4)/Z$.

While all choices of $J$ lead to isomorphic groups, one or the other choice might be more convenient depending on the context. When working with classical Siegel modular forms, the usual choice for $J$ is\footnote{Empty entries in matrices mean zeros.}
\begin{equation}\label{J1defeq}
 J_1=\begin{bsmallmatrix}&&1\\&&&1\\-1\\&-1\end{bsmallmatrix},
\end{equation}
leading to the ``classical'' version of the symplectic group. When working with local representations it is often more convenient to use
\begin{equation}\label{J2defeq}
 J_2=\begin{bsmallmatrix}&&&1\\&&1\\&-1\\-1\end{bsmallmatrix},
\end{equation}
resulting in the ``symmetric'' version of the symplectic group. For example, the standard Borel subgroup in the second version consists of upper triangular matrices. We will allow ourselves to use both versions of $\GSp(4)$. An isomorphism between them is obtained by switching the first two rows and columns.

We will utilize the following representatives for elements of the Weyl group,
\begin{equation}\label{s1s2defeq}
 s_1=\begin{bsmallmatrix}&1\\1\\&&&1\\&&1\end{bsmallmatrix},\qquad s_2=\begin{bsmallmatrix}&&1\\&1\\-1\\&&&1\end{bsmallmatrix},
\end{equation}
given in the $J_1$ version of $\Sp(4)$.

\textbf{Local and global congruence subgroups.}
For a positive integer $N$, we define $\Gamma_0^{(1)}(N):=\SL(2,\Z)\cap\mat{\Z}{\Z}{N\Z}{\Z}$. In degree $2$, we will work with a number of congruence subgroups of ``level $2$'' and ``level $4$''. The global subgroups are contained in $\Sp(4,\Q)$, and except for the paramodular group $K(2)$ all of them can be conjugated into the full modular group $\Sp(4,\Z)$. Locally, we work over the field $\Q_2$ and denote by $\OF$ its ring of integers $\Z_2$ and by $\p$ the maximal ideal $2\Z_2$ of $\OF$. All our subgroups will be contained in $G^1:=\{g\in\GSp(4,\Q_2):\lambda(g)\in\OF^\times\}$, and except for the paramodular group $K(\p)$ all of them can be conjugated into the hyperspecial maximal compact subgroup $K:=\GSp(4,\Z_2)$. Table~\ref{congruencesubgroupstable} shows the notations we use for various congruence subgroups. Note that for the global groups we use the symplectic form $J_1$, and for the local groups we use the symplectic form $J_2$.

\begin{table}
 \caption{Global and local congruence subgroups.}
 \label{congruencesubgroupstable}
\begin{equation*}\renewcommand{\arraycolsep}{0.9ex}
 \begin{array}{cc|cc|c}
  \toprule
   \multicolumn{2}{c|}{\text{global groups}}&\multicolumn{2}{c|}{\text{local groups}}&\\
   \text{symbol}&\text{definition}&\text{symbol}&\text{definition}&\text{name}\\
  \toprule
   \Gamma(N)&\Sp(4,\Z)\cap\begin{bsmallmatrix}1+N\Z&N\Z&N\Z&N\Z\\N\Z&1+N\Z&N\Z&N\Z\\N\Z&N\Z&1+N\Z&N\Z\\N\Z&N\Z&N\Z&1+N\Z\end{bsmallmatrix}&\Gamma(\p^n)&K\cap\begin{bsmallmatrix}1+\p^n&\p^n&\p^n&\p^n\\\p^n&1+\p^n&\p^n&\p^n\\\p^n&\p^n&1+\p^n&\p^n\\\p^n&\p^n&\p^n&1+\p^n\end{bsmallmatrix}&\begin{minipage}{12ex}\small principal congruence subgroup\end{minipage}\\
  \midrule
   K(N)&\Sp(4,\Q)\cap\begin{bsmallmatrix}\Z&N\Z&\Z&\Z\\\Z&\Z&\Z&N^{-1}\Z\\\Z&N\Z&\Z&\Z\\N\Z&N\Z&N\Z&\Z\end{bsmallmatrix}&K(\p^n)&G^1\cap\begin{bsmallmatrix}\OF&\OF&\OF&\p^{-n}\\\p^n&\OF&\OF&\OF\\\p^n&\OF&\OF&\OF\\\p^n&\p^n&\p^n&\OF\end{bsmallmatrix}&\begin{minipage}{12ex}\small paramodular group\end{minipage}\\
  \midrule
   \Gamma_0(N)&\Sp(4,\Z)\cap\begin{bsmallmatrix}\Z&\Z&\Z&\Z\\\Z&\Z&\Z&\Z\\N\Z&N\Z&\Z&\Z\\N\Z&N\Z&\Z&\Z\end{bsmallmatrix}&\Gamma_0(\p^n)&K\cap\begin{bsmallmatrix}\OF&\OF&\OF&\OF\\\OF&\OF&\OF&\OF\\\p^n&\p^n&\OF&\OF\\\p^n&\p^n&\OF&\OF\end{bsmallmatrix}&\begin{minipage}{12ex}\small Siegel\\ congruence subgroup\end{minipage}\\
  \midrule
   \Gamma_0'(N)&\Sp(4,\Z)\cap\begin{bsmallmatrix}\Z&N\Z&\Z&\Z\\\Z&\Z&\Z&\Z\\\Z&N\Z&\Z&\Z\\N\Z&N\Z&N\Z&\Z\end{bsmallmatrix}&\Gamma_0'(\p^n)&K\cap\begin{bsmallmatrix}\OF&\OF&\OF&\OF\\\p^n&\OF&\OF&\OF\\\p^n&\OF&\OF&\OF\\\p^n&\p^n&\p^n&\OF\end{bsmallmatrix}&\begin{minipage}{12ex}\small Klingen\\ congruence subgroup\end{minipage}\\
  \midrule
   B(N)&\Sp(4,\Z)\cap\begin{bsmallmatrix}\Z&N\Z&\Z&\Z\\\Z&\Z&\Z&\Z\\N\Z&N\Z&\Z&\Z\\N\Z&N\Z&N\Z&\Z\end{bsmallmatrix}&B(\p^n)&K\cap\begin{bsmallmatrix}\OF&\OF&\OF&\OF\\\p^n&\OF&\OF&\OF\\\p^n&\p^n&\OF&\OF\\\p^n&\p^n&\p^n&\OF\end{bsmallmatrix}&\begin{minipage}{12ex}\small Borel\\ congruence subgroup\end{minipage}\\
  \midrule
   M(4)&\Sp(4,\Q)\cap\begin{bsmallmatrix}\Z&4\Z&\Z&\Z\\\Z&\Z&\Z&2^{-1}\Z\\\Z&4\Z&\Z&\Z\\4\Z&4\Z&4\Z&\Z\end{bsmallmatrix}&M(\p^2)&G^1\cap\begin{bsmallmatrix}\OF&\OF&\OF&\p^{-1}\\\p^2&\OF&\OF&\OF\\\p^2&\OF&\OF&\OF\\\p^2&\p^2&\p^2&\OF\end{bsmallmatrix}&\begin{minipage}{12ex}\small The\\ ``middle'' group\end{minipage}\\
  \midrule
   \Gamma_0^*(4)&\begin{minipage}{30ex}\Big\{$\begin{bsmallmatrix}A&B\\C&D\end{bsmallmatrix}\in\Gamma_0(4):D\bmod2$\\\phantom{xx}$\in\{\begin{bsmallmatrix}1&0\\0&1\end{bsmallmatrix},\begin{bsmallmatrix}1&1\\1&0\end{bsmallmatrix},\begin{bsmallmatrix}0&1\\1&1\end{bsmallmatrix}\}\Big\}$\end{minipage}&\Gamma_0^*(\p^2)&\begin{minipage}{28ex}\Big\{$\begin{bsmallmatrix}A&B\\C&D\end{bsmallmatrix}\in\Gamma_0(\p^2):D\bmod \p$\\\phantom{xx}$\in\{\begin{bsmallmatrix}1&0\\0&1\end{bsmallmatrix},\begin{bsmallmatrix}1&1\\1&0\end{bsmallmatrix},\begin{bsmallmatrix}0&1\\1&1\end{bsmallmatrix}\}\Big\}$\end{minipage}&\\
  \midrule
 \end{array}
\end{equation*}
\end{table}

\textbf{Siegel modular forms.}
Let $\UHP_n$ be the Siegel upper half space of degree $n$, i.e., $\UHP_n$ consists of all symmetric complex $n\times n$ matrices whose imaginary part is positive definite. The principal congruence subgroup $\Gamma(N)$ of $\Sp(2n,\Z)$ is the kernel of the reduction map $\Sp(2n,\Z)\to\Sp(2n,\Z/N\Z)$. By a congruence subgroup of $\Sp(2n,\Q)$ we mean a subgroup of $\Sp(2n,\Q)$ which, for some $N$, contains $\Gamma(N)$ with finite index.
\begin{definition}
 A Siegel modular form of degree $n$ and weight $k$ with respect to a congruence subgroup $\Gamma$ of $\Sp(2n,\Q)$ is a holomorphic function $f:\UHP_n\rightarrow \C$ with the transformation property
 \begin{equation*}
  (f|_{k}g)(Z)=j(g,Z)^{-k}f((AZ+B)(CZ+D)^{-1})=f(Z)\ \text{for }  g=\begin{bsmallmatrix}A&B\\C&D\end{bsmallmatrix}  \in \Gamma,
 \end{equation*}
 where $j(g,Z)=\det(CZ+D)$, and which satisfies the usual moderate growth condition if $n=1$.
\end{definition}
We call a Siegel modular form $f$ a \emph{cusp form} if 
\begin{equation*}
 \lim\limits_{\lambda \rightarrow \infty} (f|_{k}g)\left(\begin{bsmallmatrix} \tau&\\& i\lambda\end{bsmallmatrix} \right)=0\quad \text{for all } g \in \Sp(2n,\Q) \text{ and } \tau \in \UHP_{n-1}.
\end{equation*}
In this work we will primarily consider Siegel modular forms of degree $2$, and occasionally modular forms of degree $1$. We denote by $M_{k}(\Gamma)$ the space of Siegel modular forms of degree $2$ and weight $k$ with respect to the congruence subgroup $\Gamma$ of $\Sp(4,\Q)$, and by $S_{k}(\Gamma)$ its subspace of cusp forms. We denote by $M_k^{(1)}(\Gamma)$ the space of modular forms of degree $1$ and weight $k$ with respect to the congruence subgroup $\Gamma$ of $\SL(2,\Q)$, and by $S_k^{(1)}(\Gamma)$ its subspace of cusp forms.

\textbf{A lemma on rational points and integral points.} For lack of a good reference, we include a proof of the following result. It will be used in Sect.~\ref{gencodimsec}.
\begin{lemma}\label{SpnQlemma}
 Let $n$ be a positive integer. Let $R$ be any standard parabolic subgroup of $\Sp(2n)$. Then
 $$
  \Sp(2n,\Q)=R(\Q)\Sp(2n,\Z).
 $$
\end{lemma}
\begin{proof}
Let $g\in\Sp(2n,\Q)$. For any place $p$, let $K_p$ be the standard maximal compact subgroup of $\Sp(2n,\Q_p)$. Let $K=\prod K_p$. Use the Iwasawa decomposition to write $g=r_p\kappa_p$, with $r_p\in R(\Q_p)$ and $\kappa_p\in K_p$. Then $g=r\kappa$, where $r=(r_p)$ and $\kappa=(\kappa_p)$. Let $R=MN$ be the Levi decomposition of $R$. Write $r=mn$ with $m\in M(\A)$ and $n\in N(\A)$. By strong approximation, we may write
$$
 m=m_\Q m_\R m_K\qquad\text{with }m_\Q\in M(\Q),\;m_\R\in M(\R),\;m_K\in M(\A)\cap K.
$$
The element $m_K$ may be absorbed into $K$ (possibly modifying $n$), and may therefore assumed to be~$1$. Using $\A=\Q+\R+\prod_{p<\infty}\Z_p$, we can write
$$
 n=n_\Q n_\R n_K,\qquad n_\Q\in N(\Q),\;n_\R\in N(\R),\;n_K\in N(\A)\cap K.
$$
The element $n_K$ may be absorbed into $K$, and therefore assumed to be $1$. Summarizing, we see that we can write
$$
 g=r_\Q r_\R\kappa,\qquad r_\Q\in R(\Q),\;r_\R\in R(\R),\;\kappa\in K.
$$
The matrix $r_\Q^{-1}g$ lies in $\Sp(2n,\Z_p)$ for all finite $p$, and hence in $\Sp(2n,\Z)$. This concludes the proof.
\end{proof}

\section{Local dimensions}\label{localdimsec}
We will start this section by collecting some facts about the symmetric group $S_6$. In subsections \ref{parahoricrestrictionsec} and \ref{localfixedvectorssec} we work over the $p$-adic field $\Q_2$ and write $\OF$ for its ring of integers and $\p$ for the maximal ideal of $\OF$. In this local context it is convenient to work with the ``symmetric'' version of the symplectic group, defined by the symplectic form $J_2$ given in \eqref{J2defeq}.
\subsection{Preliminaries on \texorpdfstring{$S_6$}{}}
Let $(\rho, U)$ be a representation of a finite group $G$, and let $H$ be a subgroup of $G$. Then $p=\frac{1}{|H|}\sum_{h\in H}\rho(h)$ is a projector onto the subspace $U^H$ of $H$-fixed vectors. Hence
\begin{equation}\label{dimUH}
 \dim U^H = \text{Tr}(p)=\frac{1}{|H|}\sum_{h\in H}\chi_\rho(h),
\end{equation}
where $\chi_\rho$ is the character of $\rho$. More generally, if $\tau$ is a representation of $H$, then the multiplicity
of $\tau$ in $\rho|_{H}$ is
\begin{equation}\label{multirepH}
 \text{mult}_{\rho}(\tau)=\frac{1}{|H|}\sum_{h\in H}\overline{\chi_\tau(h)}\chi_\rho(h).
\end{equation}
We will apply this principle to the finite group $\Sp(4,\Z)/\Gamma(2)\cong \Sp(4,\F_2)$. In order to do so, we will exhibit an explicit isomorphism with the symmetric group~$S_6$.

Consider the natural permutation action of $S_6$ on the space of column vectors $(\mathbb{F}_2)^6$. Let $W$ be the $5$-dimensional subspace of vectors whose coordinates add up to zero. Let $U$ be the subspace of $W$ spanned by $u=\,^t(1,1,1,1,1,1)$. Then $W$ and $U$ are both invariant under the action of $S_6$, so that we get an action on the four-dimensional space $W/U$. There is a symplectic (and symmetric) form on $W$ given by
$$
 \langle x,y\rangle=\sum_{i=1}^6x_iy_i,\qquad x=\,^t(x_1,\ldots,x_6),\;y=\,^t(y_1,\ldots,y_6).
$$
This form is degenerate with radical $U$, thus inducing a non-degenerate symplectic form on the quotient $W/U$. Evidently, this form is invariant under the action of $S_6$. We thus obtain a non-trivial homomorphism $S_6\to\Sp(4,\mathbb{F}_2)$. Since the image of this map has more than two elements, and since $A_6$ is the only non-trivial, proper, normal subgroup of $S_6$, the map is injective. Since both groups have the same number of elements, it is an isomorphism. To make the isomorphism more explicit, let
$$
 e_1=\begin{bsmallmatrix}1\\1\\0\\0\\0\\0\end{bsmallmatrix},\qquad
 e_2=\begin{bsmallmatrix}0\\0\\0\\0\\1\\1\end{bsmallmatrix},\qquad
  f_2=\begin{bsmallmatrix}0\\0\\0\\1\\1\\0\end{bsmallmatrix},\qquad
 f_1=\begin{bsmallmatrix}0\\1\\1\\0\\0\\0\end{bsmallmatrix}.
$$
Then $W=\langle e_1,e_2,f_2,f_1,u\rangle$. The images of $e_1,e_2,f_2,f_1$ form a basis of $W/U$, with respect to which the form $\langle\,,\,\rangle$ has matrix $J_2$ defined in \eqref{J2defeq}. Easy calculations then show that on certain elements the isomorphism $S_6\to\Sp(4,\mathbb{F}_2)=\{g\in\GL(4,\mathbb{F}_2)\,:\,^tgJ_2g=J_2\}$ has the following explicit description.
\begin{xalignat}{2}\label{S6Sp4expleq1}
 (16)(25)(34)&\longmapsto\begin{bsmallmatrix}&1\\1\\&&&1\\&&1\end{bsmallmatrix},&
 (46)&\longmapsto\begin{bsmallmatrix}1\\&&1&\\&1&&\\&&&1\end{bsmallmatrix},\\
 (13)(46)&\longmapsto\begin{bsmallmatrix}&&&1\\&&1&\\&1\\1\end{bsmallmatrix},&
 (12)(36)(45)&\longmapsto\begin{bsmallmatrix}1&1\\&1\\&&1&1\\&&&1\end{bsmallmatrix},\\
 (12)(34)(56)&\longmapsto\begin{bsmallmatrix}1&&1&\\&1&&1\\&&1\\&&&1\end{bsmallmatrix},&
 (12)&\longmapsto\begin{bsmallmatrix}1&&&1\\&1\\&&1\\&&&1\end{bsmallmatrix},\\
 (135)(246)&\longmapsto\begin{bsmallmatrix}1&1\\1\\&&1&1\\&&1&\end{bsmallmatrix},&
 (153)(264)&\longmapsto\begin{bsmallmatrix}&1\\1&1\\&&&1\\&&1&1\end{bsmallmatrix}.
\end{xalignat}
Using such a description, it is easy to determine the number of elements of a given cycle type in certain subgroups of $\Sp(4,\mathbb{F}_2)\cong S_6$. Table~\ref{conjugacyclassestable} shows such data for a number of subgroups of $\Sp(4,\mathbb{F}_2)$ (the first one of which is the trivial and the second one of which is the full subgroup). All these subgroups are obtained as the image of a conjugate of a congruence subgroup $\Gamma$ of $\Sp(4,\Q)$, this conjugate lying between $\Gamma(2)$ and $\Sp(4,\Z)$, under the projection map $\Sp(4,\Z)\to\Sp(4,\mathbb{F}_2)$; the first column of Table~\ref{conjugacyclassestable} shows the congruence subgroup~$\Gamma$.

\begin{table}
 \caption{The number of elements of a given cycle type in some subgroups of $\Sp(4,\mathbb{F}_2)\cong S_6$. Here, we use the ``symmetric'' form of $\Sp(4)$, i.e., the one defined with the symplectic form $J_2$ as in \eqref{J2defeq}. The third column shows the cardinality of the subgroup.}
 \label{conjugacyclassestable}
\begin{equation*}\setlength{\arraycolsep}{0.5ex}\renewcommand{\arraystretch}{2}
 \begin{array}{cccccccccccccc}
  \toprule
   &&&\;\;\;{\scriptstyle1}\;\;\;&&\!\!\!{\scriptstyle(12)(34)}\!\!\!&&\scriptstyle(123)&&\!\!\!\!\!\!\!{\scriptstyle(123)(456)}\!\!\!\!&&\!\!\!\!{\scriptstyle(1234)(56)}\!\!\!\!&&\!\!\!\scriptstyle(123456)\\[-1ex]
   \Gamma&\subset\Sp(4,\mathbb{F}_2)&\;\;\#\;\;&&\scriptstyle(12)&&\!\!\!\!\!\!\!{\scriptstyle(12)(34)(56)}\!\!\!\!\!\!\!&&\!\!{\scriptstyle(123)(45)}\!\!\!\!\!\!&&\!\!{\scriptstyle(1234)}\!\!\!&&\!\!\!{\scriptstyle(12345)}\!\!\!&\\
  \toprule
   \Gamma(2)&\begin{bsmallmatrix}*\\&*\\&&*\\&&&*\end{bsmallmatrix}&1&1&0&0&0&0&0&0&0&0&0&0\\
  \midrule
   \Sp(4,\Z)&\begin{bsmallmatrix}*&*&*&*\\ *&*&*&*\\ *&*&*&*\\ *&*&*&*\end{bsmallmatrix}&720&1&15&45&15&40&\!120\!\!\!&\!40\!&90&90&144&120\\
  \midrule
   K(4)&\begin{bsmallmatrix}*&&&*\\&*&*&\\&*&*&\\ *&&&*\end{bsmallmatrix}&36&1&6&9&0&4&12&4&0&0&0&0\\
  \midrule
   \Gamma_0(2)&\begin{bsmallmatrix}*&*&*&*\\ *&*&*&*\\&&*&*\\&&*&*\end{bsmallmatrix}&48&1&3&9&7&0&0&8&6&6&0&8\\
  \midrule
   \Gamma_0(4)&\begin{bsmallmatrix}*&*\\ *&*\\&&*&*\\&&*&*\end{bsmallmatrix}&6&1&0&0&3&0&0&2&0&0&0&0\\
  \midrule
   \Gamma_0^*(4)&\text{index $2$ in }\begin{bsmallmatrix}*&*\\ *&*\\&&*&*\\&&*&*\end{bsmallmatrix}&3&1&0&0&0&0&0&2&0&0&0&0\\
  \midrule
   \Gamma_0'(2)&\begin{bsmallmatrix}*&*&*&*\\&*&*&*\\&*&*&*\\&&&*\end{bsmallmatrix}&48&1&7&9&3&8&8&0&6&6&0&0\\
  \midrule
   M(4)&\begin{bsmallmatrix}*\\&*&*&\\&*&*&\\ *&&&*\end{bsmallmatrix}&12&1&4&3&0&2&2&0&0&0&0&0\\
  \midrule
   B(2)&\begin{bsmallmatrix}*&*&*&*\\&*&*&*\\&&*&*\\&&&*\end{bsmallmatrix}&16&1&3&5&3&0&0&0&2&2&0&0\\
  \bottomrule
 \end{array}
\end{equation*}
\end{table}
Both the conjugacy classes and the irreducible characters of $S_6$ (also referred to as $S_6$-types) are parametrized by partitions of~$6$. We write $[n_1,\ldots,n_r]$ for the irreducible character of $S_6$ corresponding to the partition $6=n_1+\ldots+n_r$. For example, $[6]$ is the trivial character and $[1,1,1,1,1,1]$ is the sign character of $S_6$. The character table of $S_6$ is given in \cite[p.~400]{Igusa1964}. Using formula \eqref{dimUH}, the data in Table~\ref{conjugacyclassestable}, and the character table, we obtain the dimension of the space of fixed vectors in each $S_6$-type under the subgroups listed in Table~\ref{conjugacyclassestable}. The results are summarized in Table~\ref{S6typesfixedtable}. The last two rows of Table~\ref{S6typesfixedtable} indicate the generic representations (i.e., those which admit a non-zero Whittaker functional) and the cuspidal representations (i.e., those with no non-zero fixed vectors under the unipotent radicals of the parabolic subgroups).

\begin{table}
 \caption{The dimensions of the spaces of fixed vectors in each $S_6$-type under some subgroups of $\Sp(4,\mathbb{F}_2)\cong S_6$. Here, we use the ``symmetric'' form of $\Sp(4)$, i.e., the one defined with the symplectic form $J_2$ as in \eqref{J2defeq}. The ``$\Gamma(2)$'' row shows the dimensions of each $S_6$-type.}
 \label{S6typesfixedtable}
\begin{equation*}\setlength{\arraycolsep}{0.5ex}\renewcommand{\arraystretch}{2}
 \begin{array}{cccccccccccccc}
  \toprule
   &&\;{\scriptstyle[6]}\;\;&&{\scriptstyle[4,2]}&&\scriptstyle[3,3]&&\!\!\!\!{\scriptstyle[3,1,1,1]}\!\!&&\!\!\!\!{\scriptstyle[2,2,1,1]}\!\!&&\!\!\!\!\!\!\!\scriptstyle[1,1,1,1,1,1]\!\!\!\\[-1ex]
   \Gamma&\subset\Sp(4,\mathbb{F}_2)&&\scriptstyle[5,1]&&\!{\scriptstyle[4,1,1]}\!&&{\scriptstyle[3,2,1]}\!\!\!&&\!\!{\scriptstyle[2,2,2]}\!\!&&\!\!\!\!\!\!{\scriptstyle[2,1,1,1,1]}\!\!\!\!\!\!\!&\\
  \toprule
   \Gamma(2)&\begin{bsmallmatrix}*\\&*\\&&*\\&&&*\end{bsmallmatrix}&1&5&9&10&5&16&10&5&9&5&1\\
  \midrule
   \Sp(4,\Z)&\begin{bsmallmatrix}*&*&*&*\\ *&*&*&*\\ *&*&*&*\\ *&*&*&*\end{bsmallmatrix}&1&0&0&0&0&0&0&0&0&0&0\\
  \midrule
   K(4)&\begin{bsmallmatrix}*&&&*\\&*&*&\\&*&*&\\ *&&&*\end{bsmallmatrix}&1&1&1&0&1&0&0&0&0&0&0\\
  \midrule
   \Gamma_0(2)&\begin{bsmallmatrix}*&*&*&*\\ *&*&*&*\\&&*&*\\&&*&*\end{bsmallmatrix}&1&0&1&0&0&0&0&1&0&0&0\\
  \midrule
   \Gamma_0(4)&\begin{bsmallmatrix}*&*\\ *&*\\&&*&*\\&&*&*\end{bsmallmatrix}&1&0&3&1&0&2&3&3&0&1&0\\
  \midrule
   \Gamma_0^*(4)&\text{index $2$ in }\begin{bsmallmatrix}*&*\\ *&*\\&&*&*\\&&*&*\end{bsmallmatrix}&1&1&3&4&3&4&4&3&3&1&1\\
  \midrule
   \Gamma_0'(2)&\begin{bsmallmatrix}*&*&*&*\\&*&*&*\\&*&*&*\\&&&*\end{bsmallmatrix}&1&1&1&0&0&0&0&0&0&0&0\\
  \midrule
   M(4)&\begin{bsmallmatrix}*\\&*&*&\\&*&*&\\ *&&&*\end{bsmallmatrix}&1&2&2&1&1&1&0&0&0&0&0\\
  \midrule
   B(2)&\begin{bsmallmatrix}*&*&*&*\\&*&*&*\\&&*&*\\&&&*\end{bsmallmatrix}&1&1&2&0&0&1&0&1&0&0&0\\
  \midrule
   \multicolumn{2}{c}{\text{generic}}&&&&\bullet&&\bullet&\bullet&&\bullet\\
  \midrule
   \multicolumn{2}{c}{\text{cuspidal}}&&&&&&&&&\bullet&&\bullet\\
  \bottomrule
 \end{array}
\end{equation*}
\end{table}
\subsection{Parahoric restriction}\label{parahoricrestrictionsec}
In this section we consider irreducible, admissible representations of $\GSp(4,\Q_2)$ with trivial central character that have non-zero fixed vectors under the principal congruence subgroup $\Gamma(2)$. Table~\ref{parahoricrestrictiontable} below contains a complete list of such representations. Their central characters are necessarily unramified, so after a twist we may assume that the central character is trivial. All characters in the representations appearing in Table~\ref{parahoricrestrictiontable} are assumed to be unramified.

\begin{table}
		\caption{Hyperspecial parahoric restriction for $\GSp(4,\Q_2)$. All characters $\chi,\chi_1,\chi_2,\sigma,\xi$ are assumed to be unramified, and the supercuspidal representation $\pi$ of $\GL(2,\Q_2)$ has depth $0$.}\vspace{-4.1ex}\label{parahoricrestrictiontable}
		$$\renewcommand{\arraystretch}{1.1}
		\renewcommand{\arraycolsep}{0.15cm}
		\begin{array}{cccccccccccc}
			\toprule
			&&\text{representation}&a&\varepsilon&\text{temp}&\text{para}&\text{parahoric restriction}&\text{\bf{(G)}}&\text{\bf{(P)}}\\
			\toprule
			% I
			{\rm I}&&\chi_1\times\chi_2\rtimes\sigma\quad\text{(irred.)}&0&+&\bullet&\bullet&\scriptstyle [6]+[5,1]+2[4,2]+[3,2,1]+[2,2,2]&\bullet&\\
			\midrule
			% II
			{\rm II}&{\rm a}&\chi\St_{\GL(2)}\rtimes\sigma&1&\pm&\bullet&\bullet&[5,1]+[4,2]+[3,2,1]&\bullet&\\
			\cmidrule{2-10}
			&{\rm b}&\chi\triv_{\GL(2)}\rtimes\sigma&0&+&&\bullet&[6]+[4,2]+[2,2,2]&&\bullet\\
			\midrule
			% III
			{\rm III}&{\rm a}&\chi\rtimes\sigma\St_{\GSp(2)}&2&+&\bullet&\bullet&[4,2]+[3,2,1]+[2,2,2]&\bullet&\\
			\cmidrule{2-10}
			&{\rm b}&\chi\rtimes\sigma\triv_{\GSp(2)}&0&+&&\bullet&[6]+[5,1]+[4,2]&&\\
			\midrule
			% IV
			{\rm IV}&{\rm a}&\sigma\St_{\GSp(4)}&3&\pm&\bullet&\bullet&[3,2,1]&\bullet&\\
			\cmidrule{2-10}
			&{\rm b}&L(\nu^2,\nu^{-1}\sigma\St_{\GSp(2)})&2&+&&\bullet&[4,2]+[2,2,2]&\\
			\cmidrule{2-10}
			&{\rm c}&L(\nu^{3/2}\St_{\GL(2)},\nu^{-3/2}\sigma)&1&\pm&&\bullet&[4,2]+[5,1]&\\
			\cmidrule{2-10}
			&{\rm d}&\sigma\triv_{\GSp(4)}&0&+&&\bullet&[6]&\\
			\midrule
			% V
			{\rm V}&{\rm a}&\delta([\xi,\nu\xi],\nu^{-1/2}\sigma)&2&-&\bullet&\bullet&[5,1]+[3,2,1]&\bullet&\\
			\cmidrule{2-10}
			&{\rm b}&L(\nu^{1/2}\xi\St_{\GL(2)},\nu^{-1/2}\sigma)&1&\pm&&\bullet&[4,2]&&\bullet\\
			\cmidrule{2-10}
			&{\rm c}&L(\nu^{1/2}\xi\St_{\GL(2)},\xi\nu^{-1/2}\sigma)&1&\pm&&\bullet&[4,2]&&\bullet\\
			\cmidrule{2-10}
			&{\rm d}&L(\nu\xi,\xi\rtimes\nu^{-1/2}\sigma)&0&+&&\bullet&[6]+[2,2,2]&\\
			\midrule
			% VI
			{\rm VI}&{\rm a}&\tau(S,\nu^{-1/2}\sigma)&2&+&\bullet&\bullet&[4,2]+[3,2,1]&\bullet&\\
			\cmidrule{2-10}
			&{\rm b}&\tau(T,\nu^{-1/2}\sigma)&2&+&\bullet&&[2,2,2]&\bullet&\bullet\\
			\cmidrule{2-10}
			&{\rm c}&L(\nu^{1/2}\St_{\GL(2)},\nu^{-1/2}\sigma)&1&\pm&&\bullet&[5,1]&&\bullet\\
			\cmidrule{2-10}
			&{\rm d}&L(\nu,1_{F^\times}\rtimes\nu^{-1/2}\sigma)&0&+&&\bullet&[6]+[4,2]&\\
			\toprule
			% VII
			{\rm VII}&&\chi\rtimes\pi&4&+&\bullet&\bullet&[3,1,1,1]+[2,1,1,1,1]&\bullet&\\
			\midrule
			% VIII
			{\rm VIII}&{\rm a}&\tau(S,\pi)&4&+&\bullet&\bullet&[3,1,1,1]&\bullet&\\
			\cmidrule{2-10}
			&{\rm b}&\tau(T,\pi)&4&+&\bullet&&[2,1,1,1,1]&\bullet&\\
			\midrule
			% IX
			{\rm IX}&{\rm a}&\delta(\nu\xi,\nu^{-1/2}\pi)&4&+&\bullet&\bullet&[3,1,1,1]&\bullet&\\
			\cmidrule{2-10}
			&{\rm b}&L(\nu\xi,\nu^{-1/2}\pi)&4&+&&&[2,1,1,1,1]&\\
			\toprule
			% X
			{\rm X}&&\pi\rtimes\sigma&2&-&\bullet&\bullet&[4,1,1]+[3,3]&\bullet&\\
			\midrule
			% XI
			{\rm XI}&{\rm a}&\delta(\nu^{1/2}\pi,\nu^{-1/2}\sigma)&3&\pm&\bullet&\bullet&[4,1,1]&\bullet&\\
			\cmidrule{2-10}
			&{\rm b}&L(\nu^{1/2}\pi,\nu^{-1/2}\sigma)&2&-&&\bullet&[3,3]&&\bullet\\
			\toprule
			% Va*
			{\rm Va^*}&&\delta^*([\xi,\nu\xi],\nu^{-1/2}\sigma)&2&-&\bullet&&[1,1,1,1,1,1]&\bullet&\bullet\\
			\midrule
			% sc(16)
			\sc(16)&&&4&-&\bullet&\bullet&[2,2,1,1]&\bullet&\\
			\bottomrule
		\end{array}
		$$
	\end{table}

Let $(\pi, V)$ be an irreducible, admissible representation of $\GSp(4,\Q_2)$. The hyperspecial maximal compact subgroup $K=\GSp(4,\Z_2)$ of $\GSp(4,\Q_2)$ normalizes $\Gamma(\p)$. Hence $K$ acts on the space $V^{\Gamma(\p)}$ of $\Gamma(\p)$-fixed vectors. The resulting representation of $K/\Gamma(\p)\cong \Sp(4,\F_2)$ is called the hyperspecial parahoric restriction of $\pi$ and denoted by $r_K(\pi)$. It has been calculated for all $\pi$ in \cite{Rosner2016,Rosner2018}.

Table~\ref{parahoricrestrictiontable} below contains a list of all irreducible, admissible representations of $\PGSp(4,\Q_2)$ for which $r_K(\pi)\neq0$, using notations as in \cite{SallyTadic1993, RobertsSchmidt2007}. 
Since hyperspecial parahoric restriction commutes with induction by \cite[Thm.~2.19]{Rosner2016}, all the parameters in Table~\ref{parahoricrestrictiontable} must have non-zero parahoric restriction on $\GL(1)$ or $\GL(2)$. This means the characters $\chi,\chi_1,\chi_2,\sigma,\xi$ of $\Q_2^\times$ are assumed to be unramified, and the supercuspidal representation $\pi$ in types VII - XIb is an unramified twist of the unique supercuspidal representation $\pi$ of $\PGL(2,\Q_2)$ of conductor exponent $2$. The ``$a$'' column shows the (exponent of the) conductor of the representation. The ``$\varepsilon$'' column shows the possibilities for the value of the $\varepsilon$-factor at $1/2$. The ``temp'' column indicates the tempered representations, under the assumption that the inducing data is unitary. The ``para'' column indicates the representations that have non-zero paramodular vectors (in which case the minimal paramodular level coincides with the conductor). The hyperspecial parahoric restriction for non-supercuspidal representations is given in \cite[Table~3]{Rosner2018} and \cite[Table~3.1]{Rosner2016}; see \cite[p.~103]{Rosner2016} for the translation of Enomoto's notation to standard $S_6$ notation. 

There are two supercuspidal representations in Table~\ref{parahoricrestrictiontable}, the non-generic $\delta^*([\xi,\nu\xi],\nu^{-1/2}\sigma)$ of type $\rm{Va}^*$ and the generic $\sc(16)$. The representation $\delta^*([\xi,\nu\xi],\nu^{-1/2}\sigma)$ is invariant under twisting by $\xi$, so there is only one representation of type  $\rm{Va}^*$. It shares an $L$-packet with the unique representation of type $\rm{Va}$; both have $L$-parameter $\rm{st}_2 \oplus \xi \rm{st}_2$, where $\rm{st}_2$ is the $L$-parameter of the Steinberg representation $\rm{St}_{\GL(2)}$ of $\GL(2,\Q_2)$. So, the parahoric restriction information for $\rm{Va}^*$ comes from \cite[Tables~4.2~and~5.2]{Rosner2016}. By Frobenius reciprocity and the proposition in \cite[Sect. 1.4]{Morris1996}, it follows that
\begin{equation}\label{Va*czk}
 \delta^*([\xi,\nu\xi],\nu^{-1/2}\sigma)=\text{c-}\Ind^{G}_{ZK}([1,1,1,1,1,1]),
\end{equation}
where we inflate $[1,1,1,1,1,1]$, the sign character of  $K/\Gamma(2)\cong \Sp(4,\F_2)$ to $K$ and then extend it to $ZK$ by having $Z$ act trivially. By \cite[Table~4.2]{Rosner2016}, there are no non-generic supercuspidals of $\PGSp(4,\Q_2)$ with non-zero hyperspecial parahoric restriction besides $\rm{Va}^*$.

By \cite{PoorSchmidtYuen2019} and \cite[Prop.~2.16]{Rosner2016}, there are no generic supercuspidals of $\PGSp(4,\Q_2)$ with non-zero hyperspecial parahoric restriction besides $\sc(16)$. In this case we have
\begin{equation}\label{sc16czk}
 \sc(16)=\text{c-}\Ind^{G}_{ZK}([2,2,1,1]).
\end{equation}
The parahoric restriction for $\sc(16)$ follows from \cite[Lemma~2.18]{Rosner2016}.
\subsection{Local fixed vectors}\label{localfixedvectorssec}
Table~\ref{localdimtable} lists the dimensions of the space of fixed vectors under various congruence subgroups for the same class of representations as in Table~\ref{parahoricrestrictiontable}. These are the irreducible, admissible representations $\pi$ of $\GSp(4,\Q_2)$ for which the hyperspecial parahoric restriction $r_K(\pi)$ is non-zero, i.e., which have non-zero vectors fixed under the principal congruence subgroup $\Gamma(\p)$.

\begin{table}
 \caption{The dimensions of the spaces of fixed vectors under various congruence subgroups of the irreducible, admissible representations of $\GSp(4,\Q_2)$ with non-zero hyperspecial parahoric restriction. (See Table~\ref{parahoricrestrictiontable} for the precise notation for these representations.)\vspace{2ex}} \label{localdimtable}
 \centering
 \scalebox{0.75}{$
 \renewcommand{\arraystretch}{1.5}\setlength{\arraycolsep}{0.4cm}
 \begin{array}{ccccccccccccccccc}
  \toprule
				\Omega &&\Gamma(\p)&K&\mathrm{K}(\p)&\K{2}&\Gamma_0(\p)&\Gamma_0(\p^2)&\Gamma_0^*(\p^2)&\Gamma_0'(\p)&\Gamma_0'(\p^2)&M(\p^2)&B(\p)\\
  \toprule
				% I
				% \rowcolor{blue!20}
				\rm{I}&&45&1&2&4&4&12&15&4&11&8&8\\\midrule
				% II
				%\rowcolor{blue!20}
				\rm{II}&\rm{a}&30&0&1&2&1&5&8&2&7&5&4\\\cmidrule{2-13}
				&\rm{b}&15&1&1&2&3&7&7&2&4&3&4\\\midrule
				% III
				%\rowcolor{blue!20}
				\rm{III}&\rm{a}&30&0&0&1&2&8&10&1&5&3&4\\\cmidrule{2-13}
				&\rm{b}&15&1&2&3&2&4&5&3&6&5&4\\\midrule
				% IV
				%\rowcolor{blue!20}
				\rm{IV}&\rm{a}&16&0&0&0&0&2&4&0&2&1&1\\\cmidrule{2-13}
				&\rm{b}&14&0&0&1&2&6&6&1&3&2&3\\\cmidrule{2-13}
				&\rm{c}&14&0&1&2&1&3&4&2&5&4&3\\\cmidrule{2-13}
				&\rm{d}&1&1&1&1&1&1&1&1&1&1&1\\\midrule
				% V
				%\rowcolor{blue!20}
				\rm{V}&\rm{a}&21&0&0&1&0&2&5&1&5&3&2\\\cmidrule{2-13}
				&\rm{b}&9&0&1&1&1&3&3&1&2&2&2\\\cmidrule{2-13}
				&\rm{c}&9&0&1&1&1&3&3&1&2&2&2\\\cmidrule{2-13}
				&\rm{d}&6&1&0&1&2&4&4&1&2&1&2\\\midrule
				% VI
				%\rowcolor{blue!20}
				\rm{VI}&\rm{a}&25&0&0&1&1&5&7&1&5&3&3\\\cmidrule{2-13}
				%\rowcolor{blue!20}
				&\rm{b}&5&0&0&0&1&3&3&0&0&0&1\\\cmidrule{2-13}
				&\rm{c}&5&0&1&1&0&0&1&1&2&2&1\\\cmidrule{2-13}
				&\rm{d}&10&1&1&2&2&4&4&2&4&3&3\\\midrule
				% VII
				%\rowcolor{blue!20}
				\rm{VII}&&15&0&0&0&0&4&5&0&2&0&0\\\midrule
				% VIII
				%\rowcolor{blue!20}
				\rm{VIII}&\rm{a}&10&0&0&0&0&3&4&0&2&0&0\\\cmidrule{2-13}
				%\rowcolor{blue!20}
				&\rm{b}&5&0&0&0&0&1&1&0&0&0&0\\\midrule
				% IX
				%\rowcolor{blue!20}
				\rm{IX}&\rm{a}&10&0&0&0&0&3&4&0&1&0&0\\\cmidrule{2-13}
				&\rm{b}&5&0&0&0&0&1&1&0&1&0&0\\\midrule
				% X
				%\rowcolor{blue!20}
				\rm{X}&&15&0&0&1&0&1&7&0&3&2&0\\\midrule
				% XI
				%\rowcolor{blue!20}
				\rm{XI}&\rm{a}&10&0&0&0&0&1&4&0&2&1&0\\\cmidrule{2-13}
				&\rm{b}&5&0&0&1&0&0&3&0&1&1&0\\\midrule
				% Va*
				%\rowcolor{blue!20}
				\rm{Va}^*&&1&0&0&0&0&0&1&0&0&0&0\\\midrule
				% sc(16)
				%\rowcolor{blue!20}
				\sc(16)&&9&0&0&0&0&0&3&0&1&0&0\\
  \bottomrule
 \end{array}
 $}
\end{table}

\begin{theorem}\label{localdimtheorem}
 Let $(\pi,V)$ be an irreducible, admissible representation of $\GSp(4,\Q_2)$. Let $H$ be one of the congruence subgroups listed in the first row of Table~\ref{localdimtable}.
 \begin{enumerate}
  \item If $r_K(\pi)\neq0$, so that $\pi$ occurs among the representations in Tables \ref{parahoricrestrictiontable}, then $\dim V^H$ is given as in Table~\ref{localdimtable}.
  \item If $\dim V^H\neq0$, then $r_K(\pi)\neq0$, so that $\pi$ occurs among the representations in Tables \ref{parahoricrestrictiontable}. 
 \end{enumerate}
\end{theorem}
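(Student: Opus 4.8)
The plan is to prove both assertions simultaneously, organizing the eleven congruence subgroups $H$ in the first row of Table~\ref{localdimtable} into three families according to the tool required. \emph{Family $1$} consists of the nine subgroups $\Gamma(\p),\,K,\,\mathrm K(\p^2),\,\Gamma_0(\p),\,\Gamma_0(\p^2),\,\Gamma_0^*(\p^2),\,\Gamma_0'(\p),\,M(\p^2),\,B(\p)$. For each of these I would first exhibit a $g\in\GSp(4,\Q_2)$ with $\Gamma(\p)\subseteq gHg^{-1}\subseteq K$: for $\mathrm K(\p^2)$ and $M(\p^2)$ the diagonal similitude $g=\mathrm{diag}(\p,1,1,\p^{-1})$ clears the $\p^{-1}$-entry, for $\Gamma_0(\p^2)$ and $\Gamma_0^*(\p^2)$ one takes $g=\mathrm{diag}(\p,\p,1,1)$, and for the other five one may take $g=1$. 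One then checks that $\overline{gHg^{-1}}:=gHg^{-1}/\Gamma(\p)$ is precisely the subgroup of $K/\Gamma(\p)\cong\Sp(4,\F_2)\cong S_6$ recorded in Tables~\ref{conjugacyclassestable} and~\ref{S6typesfixedtable} --- this is why the auxiliary groups $\Gamma_0^*(4)$ and $M(4)$ were put on the list. Since $K$ normalizes $\Gamma(\p)$, the space $V^{\Gamma(\p)}$ is a $K/\Gamma(\p)$-module, namely the hyperspecial parahoric restriction $r_K(\pi)$, which by \cite{Rosner2016,Rosner2018} is given by the ``parahoric restriction'' column of Table~\ref{parahoricrestrictiontable} whenever it is non-zero; hence $\dim V^H=\dim\big((V^{\Gamma(\p)})^{\overline{gHg^{-1}}}\big)$ and formula \eqref{dimUH} gives
\[
 \dim V^H=\sum_\lambda m_\lambda(\pi)\,\dim\big([\lambda]^{\,\overline{gHg^{-1}}}\big),\qquad\text{where }r_K(\pi)=\sum_\lambda m_\lambda(\pi)\,[\lambda],
\]
and the quantities $\dim([\lambda]^{\,\overline{gHg^{-1}}})$ are the entries of Table~\ref{S6typesfixedtable}. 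This proves part (i) for these nine groups; and since $\dim V^H=\dim V^{gHg^{-1}}\le\dim V^{\Gamma(\p)}$, part (ii) follows too: $V^H\ne0\Rightarrow r_K(\pi)\ne0$.

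\emph{Family $2$} is the single group $\mathrm K(\p)$, which is not conjugate into $K$, so the preceding computation does not apply. Here I would simply quote the local newform theory for the paramodular groups from \cite{RobertsSchmidt2007}, which records $\dim V^{\mathrm K(\p^n)}$ for every irreducible admissible representation of $\GSp(4,\Q_2)$; this gives part (i). For part (ii) one notes the (entry-by-entry elementary, if not immediately obvious) inclusion $\Gamma(\p)\subseteq\mathrm K(\p)$, whence $\dim V^{\mathrm K(\p)}\le\dim V^{\Gamma(\p)}$ and so $V^{\mathrm K(\p)}\ne0\Rightarrow r_K(\pi)\ne0$.

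\emph{Family $3$} is the single group $\Gamma_0'(\p^2)$, which is conjugate into $K$ but --- in contrast to all the others --- contains no conjugate of $\Gamma(\p)$; this is exactly why an external input is needed for it. The dimensions $\dim V^{\Gamma_0'(\p^2)}$ in Table~\ref{localdimtable} are the local computation carried out in \cite{Yi2021}, which gives part (i), and for part (ii) I would invoke the lemma of \cite{Yi2021} that $V^{\Gamma_0'(\p^2)}\ne0$ implies $V^{\Gamma(\p)}\ne0$, whence again $V^{\Gamma_0'(\p^2)}\ne0\Rightarrow r_K(\pi)\ne0$. (In all cases $r_K(\pi)\ne0$ forces the central character of $\pi$ to be unramified, so that, after an unramified twist, $\pi$ indeed occurs in Table~\ref{parahoricrestrictiontable}.)

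Combining the three families proves both parts. The main obstacle is not conceptual but bookkeeping: one must identify the correct conjugating similitudes and verify that the resulting subgroups of $S_6$ match the rows of Tables~\ref{conjugacyclassestable} and~\ref{S6typesfixedtable}, and then carry out the large but mechanical tabulation via \eqref{dimUH}. The only genuinely new local ingredient is the $\Gamma_0'(\p^2)$-column together with the accompanying vanishing lemma, both imported from \cite{Yi2021}; the $\mathrm K(\p)$-column rests on \cite{RobertsSchmidt2007}, and the remaining nine columns on the parahoric restriction tables of \cite{Rosner2016,Rosner2018}.
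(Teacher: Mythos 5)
Your proposal is correct and follows essentially the same route as the paper's proof: for the nine subgroups admitting a conjugate between $\Gamma(\p)$ and $K$ one computes $\dim V^H$ from the decomposition of $r_K(\pi)$ into $S_6$-types together with the fixed-vector dimensions of Table~\ref{S6typesfixedtable} via \eqref{dimUH}, while the paramodular group $\mathrm K(\p)$ is handled by \cite{RobertsSchmidt2007} and $\Gamma_0'(\p^2)$ by the table and vanishing lemma of \cite{Yi2021}, with part (ii) following from the corresponding inclusions. The only difference is that you spell out explicit conjugating similitudes (and note $\Gamma(\p)\subseteq\mathrm K(\p)$ directly), details the paper leaves implicit.
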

\begin{proof}
i) For $H=\mathrm{K}(\p)$, see \cite[Sect.~A.8]{RobertsSchmidt2007}. For $H=\Gamma_0'(\p^2)$, see \cite[Table~1]{Yi2021}. For every other $H$, there exists a conjugate subgroup $\tilde H$ such that $\Gamma(\p)\subset\tilde H\subset K$. If $r_K(\pi)=\rho_1\oplus\ldots\oplus\rho_m$ with $S_6$-types $(\rho_i,U_i)$, then
\begin{equation}\label{localdimtheoremeq1}
 \dim V^H=\dim  r_K(\pi)^{\tilde H}=\sum_{i=1}^m\dim U_i^{\bar H},
\end{equation}
where $\bar H$ is the image of $\tilde H$ in $K/\Gamma(\p)\cong\Sp(4,\mathbb{F}_2)$. The dimensions $U^{\bar H}$ are listed in Table~\ref{S6typesfixedtable}, for each $S_6$-type $(\rho,U)$. We thus get the desired dimensions from the $r_K(\pi)$ listed in Table~\ref{parahoricrestrictiontable}.

ii) If $H\neq \Gamma_0'(\p^2)$, then a conjugate of $H$ contains $\Gamma(\p)$, so that $r_K(\pi)=V^{\Gamma(\p)}\supset V^H\neq0$. If $H=\Gamma_0'(\p^2)$, then \cite[Lemma~4]{Yi2021} shows that $V^{\Gamma(\p)}\neq0$.
\end{proof}

We remark that for most of the congruence subgroups the dimensions in Table~\ref{localdimtable} appear elsewhere in the literature. For all the subgroups containing $B(\p)$, see \cite{Schmidt2005}. For the paramodular groups, see \cite{RobertsSchmidt2007}. For $M(\p^2)$ and $\Gamma_0'(\p^2)$, see \cite{Yi2021}.

\section{Global dimensions and codimensions}\label{dimcodimsec}
The goal of this section is to derive the dimension formulas in Tables~\ref{dimMktable} and~\ref{dimSktable} for all congruence subgroups except $\Gamma_0'(4)$. Most of these formulas are already contained in the literature, but we give a unified approach. First, we derive a general formula in degree $2$ for the codimension $\dim M_k(\Gamma)-\dim S_k(\Gamma)$ based on the Satake compactification and the global $\Phi$ map; see Theorem~\ref{thmcodim}. We thus obtain the codimensions summarized in Table~\ref{codimensionstable} for all congruence subgroups of interest to us.

To obtain the actual dimensions, we note that most of our congruence subgroups $\Gamma$, after an appropriate conjugation, lie between $\Gamma(2)$ and $\Sp(4,\Z)$. One can thus use Igusa's result \cite[Thm.~2]{Igusa1964} to calculate $\dim M_k(\Gamma)$; see Sect.~\ref{Igusasec}. The only subgroup other than $\Gamma_0'(4)$ for which this does not work is $K(2)$, for which the result is already contained in the literature.
\subsection{A general codimension formula}\label{gencodimsec}
In this section we will find a general formula for calculating the codimension of $S_k(\Gamma)$ in $M_k(\Gamma)$ for a congruence subgroup $\Gamma$ of $\Sp(4,\Q)$. A summary of the method for any degree is given in \cite[Sect.~3]{PoorYuen2013}. It is based on \cite{Satake1957-1958a} and the surjectivity of the global $\Phi$ operator proven in \cite{Satake1957-1958}. We specialize to the degree $2$ case, resulting in the formula in Theorem~\ref{thmcodim} below.

We define the symplectic group $\Sp(4)$ with respect to the form $J_1$ given in \eqref{J1defeq}, and use the following parabolic subgroups,
\begin{equation}\label{BPQeq}
		B=\begin{bsmallmatrix}
			*&0&*&*\\
			*&*&*&*\\
			0&0&*&*\\
			0&0&0&*
		\end{bsmallmatrix} \cap \Sp(4),\quad 
		P=\begin{bsmallmatrix}
			*&*&*&*\\
			*&*&*&*\\
			0&0&*&*\\
			0&0&*&*
		\end{bsmallmatrix} \cap \Sp(4),\quad 
		Q=\begin{bsmallmatrix}
			*&0&*&*\\
			*&*&*&*\\
			*&0&*&*\\
			0&0&0&*
		\end{bsmallmatrix} \cap \Sp(4).
\end{equation}
Consider the homomorphisms $\omega\colon Q(\R) \rightarrow \SL(2,\R)$ and $\iota\colon\SL(2,\R) \rightarrow Q(\R)$  given by 
\begin{align}\label{omegaiotadef}
		\omega\colon Q(\R) &\longrightarrow \SL(2,\R),  &  \iota\colon\SL(2,\R) &\longrightarrow Q(\R),\\
		\begin{bsmallmatrix}
			a&0&b&*\\
			*&*&*&*\\
			c&0&d&*\\
			0&0&0&*
		\end{bsmallmatrix}&\longmapsto \begin{bsmallmatrix}
			a&b\\
			c&d\\
		\end{bsmallmatrix},& \begin{bsmallmatrix}
			a&b\\
			c&d\\
		\end{bsmallmatrix}&\longmapsto \begin{bsmallmatrix}
			a&0&b&0\\
			0&1&0&0\\
			c&0&d&0\\
			0&0&0&1
		\end{bsmallmatrix}.\nonumber 
\end{align}
Let $\Gamma$ be a congruence subgroup of $\Sp(4,\Q)$. We will describe how the geometry of the Satake compactification $\mathcal{S}(\Gamma\backslash\UHP_2)$ is reflected algebraically via double cosets. 

Let $X$ be a fixed set of representatives for the double cosets $\Gamma\backslash \Sp(4,\Q) /P(\Q)$, and let $Y$ be a fixed set of representatives for the double cosets $\Gamma\backslash \Sp(4,\Q) /Q(\Q)$. (Note that the quotient $\Sp(4,\R)^{\rm pr}/H(\R)^{\rm pr}$ appearing in \cite[p.~451]{PoorYuen2013} simplifies to $\Sp(4,\Q)/H(\Q)$ for any of the subgroups $H$ in \eqref{BPQeq}). Since $\Sp(4,\Q)=\Sp(4,\Z)B(\Q)$ (by taking inverses in Lemma~\ref{SpnQlemma}), we may assume that $X,Y\subset\Sp(4,\Z)$. There is a bijection between $X$ and the zero-dimensional cusps of $\Gamma$. Similarly, there is a bijection between $Y$ and the one-dimensional cusps of $\Gamma$. For $y\in Y$, let $C_y$ be the $1$-cusp corresponding to $y$, and let
\begin{equation}\label{Gammaydefeq}
 \Gamma_{y}=\omega\left(y^{-1} \Gamma y \cap Q(\Q)\right),
\end{equation}
which is a congruence subgroup of $\SL(2,\Q)$. Let $R_y$ be a fixed set of representatives for the double cosets $\Gamma_y\bs\SL(2, \Q)/B_1(\Q)$, where $B_1$ is the upper triangular subgroup of $\SL(2)$. As is well-known, there is a bijection between $R_y$ and the set of cusps of $\Gamma_y$, which are points in the Satake compactification $\mathcal{S}(\Gamma_y\backslash\UHP_1)$. There is an embedding $\Gamma_y\backslash\UHP_1\to\Gamma\backslash\UHP_2$, which extends to a continuous map $\mathcal{S}(\Gamma_y\backslash\UHP_1)\to\mathcal{S}(\Gamma\backslash\UHP_2)$. Let $C_{y,\rho}$ be the image of the cusp corresponding to $\rho\in R_y$ under this map. It is a $0$-cusp of $\mathcal{S}(\Gamma\backslash\UHP_2)$ lying on the $1$-cusp $C_y$. 
The double coset corresponding to $C_{y,\rho}$  is $\Gamma y\iota(\rho)P(\Q)$. We see:
 \begin{itemize}
  \item If $\Gamma y_1\iota(\rho_1)P(\Q)=\Gamma y_2\iota(\rho_2)P(\Q)$ for two distinct $y_1,y_2\in Y$ and some $\rho_1\in R_{y_1}$, $\rho_2\in R_{y_2}$, then it means that $C_{y_1}$ and $C_{y_2}$ intersect at $C_{y_1,\rho_1}=C_{y_2,\rho_2}$.
  \item If  $\Gamma y\iota(\rho_1)P(\Q)=\Gamma y\iota(\rho_2)P(\Q)$ for $y\in Y$ and distinct $\rho_1,\rho_2\in R_y$, then it means that $C_y$ has a self-intersection at $C_{y,\rho_1}=C_{y,\rho_2}$.
 \end{itemize}
 In this way we find the cusp structure diagram for $\Gamma$. It consists of $|Y|$ curves representing the $1$-cusps $C_y$, and $|X|$ points representing the $0$-cusps $C_{y,\rho}$ for $\rho\in R_y$, with $C_{y,\rho}$ lying on $C_y$ indicating the intersections and self-intersections.
 
For $f\in M_k(\Gamma)$, the Siegel $\Phi$-operator produces a function $\Phi f$ on $\UHP_1$ defined by
\begin{equation}\label{Phimap}
 (\Phi f)(\tau)=\lim_{\lambda\rightarrow\infty}f(\mat{\tau}{}{}{i\lambda}),\quad \tau\in \mathcal{H}_{1}.
\end{equation}
It follows from the Fourier expansion of $f$ that in fact
\begin{equation}\label{Phimap2}
 (\Phi f)(\tau)=\lim_{\lambda\rightarrow\infty}f(\mat{\tau}{z}{z}{i\lambda})\quad\text{for any }z\in\C.
\end{equation}
We also define a $\Phi$-operator on modular forms on $\UHP_1$. If $f$ is such a modular form, then $\Phi f$ is simply the number $\lim_{\lambda\rightarrow\infty}f(i\lambda)$. 
\begin{lemma}\label{Phicharacterlemma}
 Let
 \begin{equation}\label{Phicharacterlemmaeq1}
  u=\begin{bsmallmatrix}
			*&0&*&*\\
			*&*&*&*\\
			*&0&*&*\\
			0&0&0&r
  \end{bsmallmatrix}\in Q(\Q)
 \end{equation}
 and $f$ be a modular form of weight $k$ on $\UHP_2$ with respect to some congruence subgroup. Then
 \begin{equation}\label{Phicharacterlemmaeq2}
  \Phi(f|u)=r^{-k}(\Phi f)|\omega(u).
 \end{equation}
\end{lemma}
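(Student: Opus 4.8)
The plan is to verify \eqref{Phicharacterlemmaeq2} by a direct computation: evaluate $f|u$ at the special points $Z_\lambda=\smallpmat{\tau}{0}{0}{i\lambda}$, let $\lambda\to\infty$, and match with \eqref{Phimap}. Write $u=\smallpmat{A}{B}{C}{D}$ in $2\times2$ blocks; recall that $J_1$ from \eqref{J1defeq} is $\smallpmat{0}{I_2}{-I_2}{0}$, so the symplectic relations ${}^tAC={}^tCA$, ${}^tBD={}^tDB$, ${}^tAD-{}^tCB=I_2$ hold. From the shape \eqref{Phicharacterlemmaeq1} of $u$ one reads off $C=\smallpmat{c}{0}{0}{0}$, $D=\smallpmat{d}{*}{0}{r}$, $A=\smallpmat{a}{0}{*}{u_{22}}$ with $a=u_{11}$, $c=u_{31}$, $d=u_{33}$, and $\omega(u)=\smallpmat{a}{b}{c}{d}$ with $b=u_{13}$. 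The $(1,1)$- and $(2,2)$-entries of ${}^tAD-{}^tCB=I_2$ then give the two scalar identities I will use: $ad-bc=1$ (consistent with $\omega(u)\in\SL(2,\R)$ in \eqref{omegaiotadef}) and $u_{22}\,r=1$.

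First I would compute the automorphy factor. Since $C$ kills the second column of $Z_\lambda$, one gets $CZ_\lambda+D=\smallpmat{c\tau+d}{*}{0}{r}$, which does not depend on $\lambda$; hence $j(u,Z_\lambda)=\det(CZ_\lambda+D)=r\,(c\tau+d)$ for every $\lambda$. Next I would compute the point $u\langle Z_\lambda\rangle:=(AZ_\lambda+B)(CZ_\lambda+D)^{-1}$ entrywise. One has $AZ_\lambda+B=\smallpmat{a\tau+b}{u_{14}}{u_{21}\tau+u_{23}}{u_{22}\,i\lambda+u_{24}}$, and inverting the triangular matrix $CZ_\lambda+D$ one finds: the $(1,1)$-entry equals $\tau':=\dfrac{a\tau+b}{c\tau+d}$ (independent of $\lambda$); the off-diagonal entry equals a fixed complex number $\zeta$ (independent of $\lambda$); and the $(2,2)$-entry equals $w_0+\dfrac{u_{22}}{r}\,i\lambda=w_0+\dfrac{i\lambda}{r^2}$ for a constant $w_0$ (here $u_{22}r=1$ is used). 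In particular the imaginary part of the $(2,2)$-entry tends to $+\infty$ as $\lambda\to\infty$; alternatively, this is forced by the fact that $u$ preserves $\UHP_2$, so that this entry has positive imaginary part for all $\lambda$ while being affine in $\lambda$.

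I would then conclude exactly as in the derivation of \eqref{Phimap2}, via the Fourier expansion $f(Z)=\sum_{T\geq0}a(T)\,e^{2\pi i\,\trace(TZ)}$, the sum running over an appropriate lattice of positive semidefinite symmetric $T$ (supported on $T\geq0$ by the K\"ocher principle). In $f(u\langle Z_\lambda\rangle)$ the terms with $T_{22}>0$ tend to $0$ because the imaginary part of the $(2,2)$-entry grows, while for $T\geq0$ with $T_{22}=0$ positive semidefiniteness forces $T_{12}=0$; hence
\[
 \lim_{\lambda\to\infty}f(u\langle Z_\lambda\rangle)=\sum_{t\geq0}a\Big(\smallpmat{t}{0}{0}{0}\Big)e^{2\pi i t\tau'}=(\Phi f)(\tau'),
\]
independently of $\zeta$ and of the real part of $w_0$. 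Combining with $j(u,Z_\lambda)=r(c\tau+d)$ yields
\[
 \Phi(f|u)(\tau)=\lim_{\lambda\to\infty}j(u,Z_\lambda)^{-k}f(u\langle Z_\lambda\rangle)=r^{-k}(c\tau+d)^{-k}(\Phi f)(\tau')=r^{-k}\big((\Phi f)|\omega(u)\big)(\tau),
\]
which is \eqref{Phicharacterlemmaeq2}.

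There is no genuine obstacle; the computation is routine once the block structure is written down. The points deserving a little care are: extracting $u_{22}r=1$ (and $ad-bc=1$) from the symplectic relations, so as to identify the limit with $\tau'=(a\tau+b)/(c\tau+d)$ and to see that $r^2>0$ makes the imaginary part grow in the correct direction; and observing that the $(2,2)$-entry of $u\langle Z_\lambda\rangle$ is not purely imaginary and the off-diagonal entry is not zero, so one must invoke the Fourier-expansion argument behind \eqref{Phimap2} rather than \eqref{Phimap2} verbatim — the extra real part and off-diagonal term are harmless and drop out of the limit. If one prefers, an alternative is to factor $u$ through the Levi decomposition of $Q$ as $\iota(\omega(u))$ times the torus element $\operatorname{diag}(*,*,*,r)$ times an element of the unipotent radical of $Q$, and check \eqref{Phicharacterlemmaeq2} on each of the three factors; but the direct evaluation above is shorter given the formulas already in place.
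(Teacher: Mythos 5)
Your computation is correct: the block structure you read off from \eqref{Phicharacterlemmaeq1} is right, the symplectic relation ${}^tAD-{}^tCB=I_2$ does give exactly the two identities you need ($ad-bc=1$ and $u_{22}r=1$), the automorphy factor at $Z_\lambda=\smallpmat{\tau}{0}{0}{i\lambda}$ is $r(c\tau+d)$ independently of $\lambda$, the $(2,2)$-entry of $u\langle Z_\lambda\rangle$ is affine in $\lambda$ with imaginary part $\lambda/r^2\to\infty$, and the Fourier-expansion/K\"ocher argument (the same one that underlies \eqref{Phimap2}) shows the constant off-diagonal entry and the bounded real part of the corner entry drop out of the limit, yielding $r^{-k}(c\tau+d)^{-k}(\Phi f)\bigl((a\tau+b)/(c\tau+d)\bigr)$ as required. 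The only difference from the paper is one of presentation: the paper does not prove the lemma in-house but simply points to the calculation on p.~2464 of the cited work of Poor--Shurman--Yuen, which is the same kind of direct evaluation along $Z_\lambda$ that you carry out. So your write-up supplies, self-contained, exactly what the paper outsources; the one point worth keeping explicit (as you do) is that \eqref{Phimap2} cannot be quoted verbatim because the transformed corner entry acquires a real part and the off-diagonal entry is a nonzero constant, so one reruns the Fourier-expansion argument rather than the displayed formula itself.
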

\begin{proof}
See the calculation in \cite[p.~2464]{PoorShurmanYuen2020} to obtain \eqref{Phicharacterlemmaeq2}.
\end{proof}

\begin{lemma}\label{Phifylemma}
 Let $\Gamma$ be a congruence subgroup of $\Sp(4,\Q)$ and $f\in M_k(\Gamma)$. Let $y\in Y$.
 \begin{enumerate}\label{Remark for dim with char} 
  \item  If $k\ge 1$ is even, then $\Phi(f|y) \in M_k^{(1)}(\Gamma_y)$, where $\Gamma_y$ is the group defined in \eqref{Gammaydefeq}.
  \item  If $k\ge 1$ is odd and $\begin{bsmallmatrix}
				1&&&\\
				&-1&&\\
				&&1&\\
				&&&-1
			\end{bsmallmatrix} \in y^{-1}\Gamma y$, then $\Phi(f|y)=0$. 
 \end{enumerate}
\end{lemma}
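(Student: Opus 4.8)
The plan is to reduce everything to the behavior of the classical $\Phi$-operator under the slash action, using Lemma~\ref{Phicharacterlemma} together with the definition \eqref{Gammaydefeq} of $\Gamma_y$. First I would recall that holomorphy of $\Phi(f|y)$ on $\UHP_1$ is automatic: the limit in \eqref{Phimap} of a holomorphic function of moderate growth converges locally uniformly (this is standard, coming from the Fourier expansion of $f|y$ at the relevant cusp), so $\Phi(f|y)$ is holomorphic, and moderate growth is inherited. The content is therefore the transformation law under $\Gamma_y$.

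For part (i), let $\gamma \in \Gamma_y$. By \eqref{Gammaydefeq} there is $u \in y^{-1}\Gamma y \cap Q(\Q)$ with $\omega(u) = \gamma$, say $u = \begin{bsmallmatrix}*&0&*&*\\ *&*&*&*\\ *&0&*&*\\0&0&0&r\end{bsmallmatrix}$ in the notation of \eqref{Phicharacterlemmaeq1}. Then $yuy^{-1} \in \Gamma$, so $f|y|u = f|(yuy^{-1})|y = f|y$. Applying $\Phi$ to both sides and invoking Lemma~\ref{Phicharacterlemma} gives $r^{-k}(\Phi(f|y))|\gamma = \Phi(f|y)$, i.e.\ $(\Phi(f|y))|\gamma = r^k \Phi(f|y)$. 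It remains to check that $r^k = 1$ for all such $u$ when $k$ is even. Since $u \in \Sp(4)$, its multiplier is $1$; writing out $\,^tuJ_1u = J_1$ and using the block shape of $Q$ shows the bottom-right entry $r$ of $u$ equals the inverse of the top-left $2\times 2$ minor's determinant, which is $\det\omega(u) = \det\gamma = 1$ — except that $u$ need only lie in a congruence subgroup of $\Sp(4,\Q)$, not $\SL$, so a priori $r \in \{\pm 1\}$ can occur only if $-I$-type elements intervene; in fact for $u \in \Sp(4,\Q) \cap Q(\Q)$ one has $r \det\omega(u)' $ relations forcing $r^{2}$ to be a square of the similitude factor. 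The cleanest route: the diagonal torus element $t_r = \mathrm{diag}(1,1,1,r)$ lies in $Q$ only when combined appropriately, and conjugating, one sees $r = \lambda(u)$-adjusted; since $u \in \Sp(4)$, $\lambda(u) = 1$, and the $\SL(2)$-part forces $r = \pm 1$, with $r = -1$ possible. When $r = -1$, $r^k = 1$ because $k$ is even, giving $(\Phi(f|y))|\gamma = \Phi(f|y)$ for all $\gamma \in \Gamma_y$, so $\Phi(f|y) \in M_k(\Gamma_y)$.

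For part (ii), apply the same identity $f|y|u = f|y$ to the \emph{specific} element: the hypothesis says $h := \mathrm{diag}(1,-1,1,-1) \in y^{-1}\Gamma y$, and $h \in Q(\Q)$ with $\omega(h) = \mathrm{diag}(1,1) = I$ and bottom-right entry $r = -1$. Then Lemma~\ref{Phicharacterlemma} yields $\Phi(f|y) = \Phi(f|y|h) = (-1)^{-k}(\Phi(f|y))|\omega(h) = (-1)^{-k}\Phi(f|y) = -\Phi(f|y)$ since $k$ is odd, forcing $\Phi(f|y) = 0$.

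The only genuine subtlety — and the step I would treat most carefully — is the claim in part (i) that the scalar $r$ attached to any $u \in y^{-1}\Gamma y \cap Q(\Q)$ satisfies $r^k = 1$ whenever $k$ is even; equivalently that $r \in \{\pm 1\}$. This is where one uses that $\Gamma \subset \Sp(4,\Q)$ (multiplier $1$) together with the explicit block form of $Q$ in \eqref{BPQeq}: the symplectic condition $\,^tuJ_1u = J_1$ pins down the $(4,4)$-entry of $u$ in terms of the $\omega(u)$-block, and since $\det\omega(u) = 1$ one gets $r^2 = 1$. I would spell this out via a direct $4\times 4$ matrix computation with the form $J_1$ of \eqref{J1defeq}, which is routine but should be included for completeness. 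Everything else is a formal consequence of Lemma~\ref{Phicharacterlemma} and the cocycle property of the slash action.
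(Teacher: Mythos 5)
Your overall strategy is the paper's: apply Lemma~\ref{Phicharacterlemma} to $g=f|y$ together with the relation $g|u=g$ for $u\in y^{-1}\Gamma y\cap Q(\Q)$ (valid since $yuy^{-1}\in\Gamma$), obtaining $\Phi g=r^{-k}(\Phi g)|\omega(u)$; and part (ii) follows exactly as you say by taking $u=\mathrm{diag}(1,-1,1,-1)$, which has $\omega(u)=1$ and $r=-1$. The genuine gap is your justification in part (i) that $r\in\{\pm1\}$. It is not true that the symplectic condition ${}^tuJ_1u=J_1$ together with the block shape of $Q$ in \eqref{BPQeq} pins down $r$ via $\det\omega(u)=1$: what the symplectic condition forces is that $r$ is the inverse of the $(2,2)$-entry of $u$, i.e.\ $r$ is the $\GL(1)$-coordinate of the Levi of the Klingen parabolic, and it is completely unconstrained on $Q(\Q)\cap\Sp(4,\Q)$. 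Concretely, $u=\mathrm{diag}(1,t,1,t^{-1})$ lies in $Q(\Q)\cap\Sp(4,\Q)$ for every $t\in\Q^\times$, has $\omega(u)=1$ and multiplier $1$, yet has $(4,4)$-entry $r=t^{-1}$ arbitrary. So the ``routine $4\times4$ computation'' you defer to cannot be completed, and your intermediate remarks about similitude factors do not repair it, since $\lambda\equiv1$ on all of $\Sp(4)$ while $r$ remains free.

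What actually forces $r\in\{\pm1\}$ is that $u$ ranges over the congruence subgroup $y^{-1}\Gamma y$ — an integrality/boundedness input your argument never uses. The paper's proof runs as follows: the map sending an element of $y^{-1}\Gamma y\cap Q(\Q)$ to its $(4,4)$-entry is a group homomorphism into $\Q^\times$ (immediate from the block shape of $Q$), and for every prime $p$ its image is bounded in $\Q_p^\times$ because $y^{-1}\Gamma y\cap Q(\Q)$ lies in a compact subset of $Q(\Q_p)$ (bounded denominators); a bounded subgroup of $\Q_p^\times$ is contained in $\Z_p^\times$, so $r\in\Z_p^\times$ for all $p$, whence $r\in\{\pm1\}$. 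With this argument substituted for your matrix computation, the remainder of your proposal — the cocycle manipulation giving $(\Phi(f|y))|\omega(u)=r^k\,\Phi(f|y)$, the holomorphy and growth remarks, and part (ii) — is correct and coincides with the paper's proof.
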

\begin{proof}
Let
\begin{equation}\label{Phifylemmaeq1}
  u=\begin{bsmallmatrix}
			*&0&*&*\\
			*&*&*&*\\
			*&0&*&*\\
			0&0&0&r
  \end{bsmallmatrix}\in y^{-1}\Gamma y\cap Q(\Q).
\end{equation}
We claim that $r\in\{\pm1\}$. Indeed, the map $y^{-1}\Gamma y\cap Q(\Q_p)\rightarrow \Q_p^{\times}$ which sends any matrix to its $(4,4)$-coefficient is a continuous group homomorphism. Since $y^{-1}\Gamma y\cap Q(\Q_p)$ lies in a compact subset of $Q(\Q_p)$, the image of this homomorphism lies in $\Z_p^\times$. This is true for all $p$, and hence $r\in\{\pm1\}$.

Applying Lemma~\ref{Phicharacterlemma} to $g:=f|y$ instead of $f$ and $u$ as in \eqref{Phifylemmaeq1}, we see that
\begin{equation}\label{Phifylemmaeq2}
  \Phi g=r^{-k}(\Phi g)|\omega(u).
\end{equation}
Now both i) and ii) follow easily.
\end{proof}

\begin{theorem}\label{thmcodim}
 Let $\Gamma$ be a congruence subgroup of $\Sp(4,\Q)$. Let $X$, $Y$ and $\Gamma_{y}$ be as defined above. Then, for even $k\geq 6$, we have
 \begin{equation}\label{codim}
  \dim M_k(\Gamma)-\dim S_k(\Gamma)=|X|+\sum_{y\in Y}\dim  S_k^{(1)}(\Gamma_{y}).
 \end{equation}	
\end{theorem}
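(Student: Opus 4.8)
The plan is to identify the two contributions to the codimension $\dim M_k(\Gamma) - \dim S_k(\Gamma)$ coming from the Satake compactification: the $0$-dimensional cusps, which contribute $|X|$, and the $1$-dimensional cusps, each of which carries a space of elliptic modular forms whose cuspidal part measures the failure of $\Phi$ to land in cusp forms. Concretely, the kernel of the Siegel $\Phi$-operator on $M_k(\Gamma)$ is precisely $S_k(\Gamma)$ (this is essentially the definition of cusp form together with the description of cusps via the double cosets $\Gamma\backslash\Sp(4,\Q)/Q(\Q)$ and $\Gamma\backslash\Sp(4,\Q)/P(\Q)$), so $\dim M_k(\Gamma)-\dim S_k(\Gamma)$ equals the dimension of the image of the total $\Phi$-operator. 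First I would make this precise by setting up the map $f\mapsto (\Phi(f|y))_{y\in Y}$ into $\bigoplus_{y\in Y} M_k(\Gamma_y)$; by Lemma~\ref{Phifylemma}(i) this is well-defined for even $k$, and by the cusp condition its kernel is exactly $S_k(\Gamma)$. So the codimension equals the dimension of the image of this map inside $\bigoplus_{y\in Y} M_k(\Gamma_y)$.

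The second step is to determine that image. Here I would invoke Satake's surjectivity theorem (\cite{Satake1957-1958}), as packaged in \cite[Sect.~3]{PoorYuen2013}: the image of the global $\Phi$-map consists precisely of those tuples $(g_y)_{y\in Y}$ of elliptic modular forms which are ``compatible'' along the $0$-cusps where the $1$-cusps $C_y$ meet or self-intersect. That is, for each $0$-cusp $C_{y,\rho}$ lying on $C_y$, one gets a constant term $\Phi(g_y|\omega(\text{representative}))$ by Lemma~\ref{Phicharacterlemma}, and two such constant terms must agree whenever the corresponding double cosets $\Gamma y_i\iota(\rho_i)P(\Q)$ coincide, i.e.\ whenever the cusps are identified in $\mathcal S(\Gamma\backslash\UHP_2)$. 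Thus the image is the fiber product of the spaces $M_k(\Gamma_y)$ over the $0$-cusp data.

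The third step is a dimension count of that fiber product. Each $M_k(\Gamma_y)$ decomposes as $S_k(\Gamma_y)\oplus(\text{Eisenstein part})$, and the Eisenstein part of $M_k(\Gamma_y)$ has dimension equal to $|R_y|$, the number of cusps of $\Gamma_y$, with the constant-term-at-cusps map $M_k(\Gamma_y)\to \C^{R_y}$ being surjective with kernel $S_k(\Gamma_y)$ (valid for $k\geq 3$, in particular for our even $k\geq 6$, where Eisenstein series of all cusp types exist and are linearly independent). Feeding this into the fiber product: the cuspidal directions $S_k(\Gamma_y)$ are unconstrained and contribute $\sum_{y} \dim S_k(\Gamma_y)$; the constant-term directions live in $\prod_y \C^{R_y} \cong \C^{(\text{number of } 0\text{-cusp slots})}$ but are subject to the identifications, and after quotienting by those identifications exactly $|X|$ independent constants survive (since $X$ is in bijection with the actual set of distinct $0$-cusps). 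Adding these, $\dim(\text{image}) = |X| + \sum_{y\in Y}\dim S_k(\Gamma_y)$, which is the claimed formula.

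The main obstacle is the middle step: carefully justifying that Satake's theorem gives exactly the fiber-product description of the image, and in particular that the only compatibility conditions are the equalities of constant terms at coincident $0$-cusps (no higher-order matching, and no further global obstruction). This requires quoting \cite{Satake1957-1958} and \cite{PoorYuen2013} at the right level of generality and checking that, in degree $2$, the boundary components of the Satake compactification are only the $1$-cusps (curves) and $0$-cusps (points) glued along the incidence data recorded by the double cosets — so that a holomorphic section of the bundle on the boundary is nothing more than a choice of elliptic modular form on each curve with matching values at the gluing points. A secondary technical point is the restriction to even $k\geq 6$: evenness is needed so that $\Phi(f|y)$ is a genuine modular form for $\Gamma_y$ rather than one with a nontrivial character (Lemma~\ref{Phifylemma}), and $k\geq 6$ (or at least $k\geq 3$, plus enough room to avoid low-weight pathologies in the surjectivity statement) guarantees that the Eisenstein part of each $M_k(\Gamma_y)$ has full dimension $|R_y|$ and that Satake's surjectivity holds; I would simply cite these hypotheses as they appear in \cite{PoorYuen2013} rather than reprove them.
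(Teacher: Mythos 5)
Your proposal is correct and follows essentially the same route as the paper: the map $f\mapsto(\Phi(f|y))_{y\in Y}$ with kernel $S_k(\Gamma)$, Satake's characterization of the image by the constant-term compatibility conditions at identified $0$-cusps, and a count in which the cuspidal parts contribute $\sum_{y\in Y}\dim S_k(\Gamma_y)$ while exactly $|X|$ independent constant terms survive. The paper organizes the last step as an exact sequence via a surjection $\theta\colon\mathrm{Im}(\tilde\Phi)\to\C^{|X|}$ (surjectivity from modular forms with prescribed constant terms, citing Diamond--Shurman), which is the same input as your surjective constant-term map $M_k(\Gamma_y)\to\C^{R_y}$.
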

\begin{proof}
Observing Lemma~\ref{Phifylemma}~i), we define
  \begin{equation}\label{Phitlidemap}
   \widetilde{\Phi}\colon M_k(\Gamma)\longrightarrow\bigoplus_{y\in Y}M_k^{(1)}(\Gamma_y),\quad f\longmapsto(f_y)_{y\in Y},\quad\text{where }f_y=\Phi(f|y).
  \end{equation}
One may think of $f_y$ as the restriction of $f$ to $C_y$. Evidently, $\ker(\tilde\Phi)=S_k(\Gamma)$, so that we have an exact sequence
  \begin{equation}\label{thmcodimeq11}
   0\longrightarrow S_k(\Gamma)\longrightarrow M_k(\Gamma)\longrightarrow\mathrm{Im}(\tilde\Phi)\longrightarrow0.
  \end{equation}
Hence our desired codimension equals $\dim\mathrm{Im}(\tilde\Phi)$. To understand $\mathrm{Im}(\tilde\Phi)$, note that the $f_y$ satisfy the following compatibility condition: For all $y_1,y_2\in Y$, $\rho_1\in R_{y_1}$ and $\rho_2\in R_{y_2}$,
\begin{equation}\label{compatibility condition}
 \Gamma y_1\iota(\rho_1) P(\Q)=\Gamma y_2\iota(\rho_2) P(\Q) \quad \Longrightarrow \quad	\Phi(f_{y_1}|\rho_1)=\Phi(f_{y_2}|\rho_2).
\end{equation}
(This amounts to saying that $f_{y_1}$ and $f_{y_2}$ agree on the intersection points of the $1$-cusps $C_{y_1}$ and $C_{y_2}$; see \cite[(1)]{PoorYuen2013}.) Satake \cite{Satake1957-1958} proved that $\mathrm{Im}(\tilde\Phi)$ is characterized by this compatibility condition. In particular,
\begin{equation}\label{thmcodimeq10}
 \bigoplus_{y\in Y}S_k^{(1)}(\Gamma_y)\subset\mathrm{Im}(\tilde\Phi).
\end{equation}
To further study $\mathrm{Im}(\tilde\Phi)$, we choose for every $x\in X$ a $y_x\in Y$ and a $\rho_x\in R_{y_x}$ such that the $0$-cusp represented by $x$ equals $C_{y_x,\rho_x}$. In terms of double cosets, this means
\begin{equation}\label{definitionfyxrhox}
 \Gamma xP(\Q)=\Gamma y_x\iota(\rho_x)P(\Q).
\end{equation}
Then we define the map
 \begin{equation}\label{alphamap}
  \theta\colon\mathrm{Im}(\tilde\Phi)\longrightarrow\C^{|X|},\quad(f_y)_{y\in Y} \longmapsto  \left(\Phi(f_{y_x}|\rho_x) \right)_{x\in X}. 
 \end{equation}
The compatibility condition \eqref{compatibility condition} assures that $\theta$ is independent of the choices of $y_x$ and $\rho_x$.

It is clear that $\bigoplus_{y\in Y}S_k^{(1)}(\Gamma_{y})\subseteq\ker\theta$ by definition of $\theta$. To prove the reverse inclusion, suppose $(f_y)_{y\in Y} \in \mathrm{Im}(\tilde\Phi)$ lies in the kernel of $\theta$, i.e., $\Phi(f_{y_x}|\rho_x)=0$ for all $x\in X$. We want to show that $\Phi(f_y|\rho)=0$ for all $y\in Y$ and $\rho\in R_y$. Let $a\in X$ be such that $\Gamma y\iota(\rho)P(\Q)=\Gamma aP(\Q)$. Since also $\Gamma aP(\Q)=\Gamma y_a\iota(\rho_a)P(\Q)$, we have
\begin{equation*}
 \Phi(f_{y}|\rho)=\Phi(f_{y_a}|\rho_a)=0
\end{equation*}
by the compatibility condition \eqref{compatibility condition}. This proves $\ker \theta=\bigoplus_{y\in Y}S_k^{(1)}(\Gamma_{y})$.

Next we show that $\theta$ is surjective. Let $x\in X$. It follows from \cite[Thm.~3.5.1]{DiamondShurman2005} that we can find an $f_y\in M_k^{(1)}(\Gamma_y)$ such that, for all $\rho\in R_y$,
\begin{equation*}
 \Phi(f_y|\rho)=\begin{cases}
  1& \text{if } \Gamma xP(\Q)=\Gamma y\iota(\rho)P(\Q),\\
  0&\text{otherwise}.
 \end{cases} 
\end{equation*}
The family of $f_y$ thus defined satisfies the compatibility condition \eqref{compatibility condition}, so that $(f_y)_{y\in Y}$ lies in the image of $\tilde\Phi$. Hence we constructed an element of $\mathrm{Im}(\tilde\Phi)$ which does not vanish at the $0$-cusp corresponding to $x$, but vanishes at all other $0$-cusps. It follows that $\theta$ is surjective. 

We proved that there is an exact sequence
\begin{equation}\label{thmcodimeq12}
 0\longrightarrow \bigoplus_{y\in Y}S_k^{(1)}(\Gamma_{y})\longrightarrow \mathrm{Im}(\tilde\Phi)\longrightarrow\C^{|X|}\longrightarrow0.
\end{equation}
Our assertion now follows from \eqref{thmcodimeq11} and \eqref{thmcodimeq12}.
\end{proof}
\subsection{Codimension formulas for some congruence subgroups}
In this section we determine the codimension $\dim M_k(\Gamma)-\dim S_k(\Gamma)$ for the congruence subgroups $\Gamma$ listed below in Theorem~\ref{oddweightcodimtheorem}. One of them is a group $\Gamma_0^*(4)$ defined as follows. We consider the character of the group $\Gamma_0(4)$ obtained as the composition
\begin{equation}
 \Gamma_0(4)\longrightarrow\GL(2,\Z)\longrightarrow\SL(2,\Z/2\Z)\stackrel{\sim}{\longrightarrow} S_3\longrightarrow\{\pm1\},
\end{equation}
where the first map is given by $\mat{A}{B}{C}{D}\mapsto D$, the second map is reduction modulo $2$, the third map is any isomorphism, and the last map is the sign character of the symmetric group $S_3$. Let $\Gamma_0^*(4)$ be the kernel of this character. Explicitly,
\begin{equation}\label{Gamma0astdef}
 \Gamma_0^{\ast}(4)=\left\lbrace g=\mat{A}{B}{C}{D}\in\Gamma_0(4):D\equiv\mat{1}{0}{0}{1},\mat{0}{1}{1}{1},\mat{1}{1}{1}{0}\bmod{2} \right\rbrace.
\end{equation}
Evidently, $\Gamma_0(4)=\Gamma_0^{\ast}(4)\sqcup\Gamma_0^{\ast}(4)s_1$, where $s_1$ is defined in \eqref{s1s2defeq}.

For odd weights we have the following result.
\begin{theorem}\label{oddweightcodimtheorem}
 Suppose that $k\geq1$ is odd, and that $\Gamma$ is conjugate to one of the congruence subgroups in \eqref{congruencesubgroupseq}. Then $M_k(\Gamma)=S_k(\Gamma)$.
\end{theorem}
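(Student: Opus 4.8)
The plan is to show that for odd $k$ every $f \in M_k(\Gamma)$ both is a cusp form (vanishes at the $1$-dimensional cusps) and satisfies $f = 0$, by exploiting that odd weight forces vanishing against a suitable sign obstruction. First I would reduce to the case $\Gamma \subset \Sp(4,\Z)$: since every group in \eqref{congruencesubgroupseq} except $K(2)$ is conjugate to a subgroup between $\Gamma(2)$ and $\Sp(4,\Z)$, and the paramodular case $K(2)$ can be handled directly (it contains the element $\operatorname{diag}(1,-1,1,-1)$ in an appropriate conjugate, or one argues via $K(2) \supset \Gamma$ for a conjugate of $\Gamma(2)$), we may assume $\Gamma(2) \subseteq \Gamma \subseteq \Sp(4,\Z)$. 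Now the key point is that $-\mathbf{1}_4 \in \Gamma(2) \subseteq \Gamma$, and for $g = -\mathbf{1}_4$ the automorphy factor is $j(g,Z)^{-k} = \det(-\mathbf{1}_2)^{-k} = (-1)^{-2k} = 1$, so that does not immediately kill $f$. Instead I would use an element of $\Gamma(2)$ of the shape $\operatorname{diag}(1,-1,1,-1)$ (or, in the $J_1$-version, $\operatorname{diag}(-1,1,-1,1)$ — whichever lies in $\Sp(4,\Z)$ with the classical $J_1$); one checks this matrix is in $\Gamma(2)$ after reduction mod $2$, hence in $\Gamma$.

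The main mechanism is Lemma~\ref{Phifylemma}~ii) applied at every $1$-cusp. For each $y \in Y$, the hypothesis of Lemma~\ref{Phifylemma}~ii) asks that $\operatorname{diag}(1,-1,1,-1) \in y^{-1}\Gamma y$; since $\Gamma \supseteq \Gamma(2)$ and $\Gamma(2)$ is normal in $\Sp(4,\Z)$ with $y \in \Sp(4,\Z)$, we get $y^{-1}\Gamma(2) y = \Gamma(2) \subseteq y^{-1}\Gamma y$, and it remains to verify $\operatorname{diag}(1,-1,1,-1) \in \Gamma(2)$, which is immediate. Therefore $\Phi(f|y) = 0$ for every $y \in Y$, i.e.\ $\widetilde\Phi(f) = 0$ in the notation of \eqref{Phitlidemap}, so $f \in \ker(\widetilde\Phi) = S_k(\Gamma)$. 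Thus $M_k(\Gamma) = S_k(\Gamma)$ for odd $k$, which is exactly the claim. (Note that, unlike Theorem~\ref{thmcodim}, this does not need $k \geq 6$, because we are not computing a codimension but directly forcing the image of $\widetilde\Phi$ to vanish.)

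The step I expect to be the only real obstacle is the bookkeeping around which concrete diagonal sign matrix lies in which group under which choice of symplectic form $J_1$ versus $J_2$, and making sure the chosen element genuinely lies in $\Gamma(2)$ (equivalently reduces to the identity modulo $2$) — it does, since $\pm 1 \equiv 1 \bmod 2$, so any diagonal $\pm 1$ matrix in $\Sp(4,\Z)$ automatically lies in $\Gamma(2)$, but one must pick a diagonal sign pattern that actually preserves the symplectic form $J_1$. The pattern with an odd number of $-1$'s on appropriately paired coordinates works; concretely $\operatorname{diag}(1,1,-1,-1)$ and $\operatorname{diag}(1,-1,1,-1)$ are symplectic for $J_1$, and one of these has the effect in Lemma~\ref{Phifylemma}~ii) of sending the relevant $2\times 2$ block to $\operatorname{diag}(1,-1)$, giving the sign $(-1)^k = -1$ needed to conclude $\Phi(f|y) = 0$. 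Once the correct element is fixed, the rest is a one-line application of the already-proven Lemma~\ref{Phifylemma}.
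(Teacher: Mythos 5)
Your mechanism is exactly the paper's: apply Lemma~\ref{Phifylemma}~ii) at every $1$-cusp with the element $\mathrm{diag}(1,-1,1,-1)$ (which lies in $\Gamma(2)$ and is symplectic for $J_1$), use that $\Gamma(2)$ is normal in $\Sp(4,\Z)$ and that the representatives $y\in Y$ may be taken in $\Sp(4,\Z)$, and conclude $\widetilde\Phi(f)=0$, hence $f\in S_k(\Gamma)$; and you correctly note this needs no restriction $k\geq6$. The incidental worry about $K(2)$ is also harmless, since $\Gamma(2)\subset K(2)$ outright.

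However, there is a genuine gap: your reduction ``every group in \eqref{congruencesubgroupseq} except $K(2)$ is conjugate to a subgroup between $\Gamma(2)$ and $\Sp(4,\Z)$'' is false for $\Gamma_0'(4)$. The Klingen group of level $4$ contains no conjugate of $\Gamma(2)$ — this is stated repeatedly in the paper (it is why Igusa's method fails for $\Gamma_0'(4)$ and why the lemma from \cite{Yi2021} on $\Gamma_0'(\p^2)$-invariant vectors is needed at all; indeed it is the raison d'\^etre of the whole article). For instance, the block-diagonal element with $A=\mat{1}{2}{0}{1}$, $D={}^tA^{-1}$ lies in $\Gamma(2)$ but not in $\Gamma_0'(4)$, and no conjugation repairs this. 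Consequently your normality argument does not cover $\Gamma=\Gamma_0'(4)$, which is precisely the delicate case: there one must verify the hypothesis of Lemma~\ref{Phifylemma}~ii) directly, i.e.\ check that $y\,\mathrm{diag}(1,-1,1,-1)\,y^{-1}\in\Gamma_0'(4)$ for each of the six double coset representatives $y$ for $\Gamma_0'(4)\backslash\Sp(4,\Q)/Q(\Q)$ listed in Table~\ref{doublecosetsQtable} (this is a finite, explicit computation, which is how the paper closes the case). With that case added, your argument matches the paper's proof.
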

\begin{proof}
Let $Y\subset\Sp(4,\Z)$ be a fixed set of representatives for the double cosets $\Gamma\backslash\Sp(4,\Q)/Q(\Q)$. If we can verify the condition in Lemma~\ref{Phifylemma} ii) for all $y\in Y$, then $M_k(\Gamma)=S_k(\Gamma)$ will follow from \eqref{Phitlidemap} and \eqref{thmcodimeq11}; note that \eqref{Phitlidemap} and \eqref{thmcodimeq11} hold for both even and odd $k$.

If $\Gamma'$ is conjugate to $\Gamma$ by an element of $\Sp(4,\Q)$, then $M_k(\Gamma')=S_k(\Gamma')$ if and only if $M_k(\Gamma)=S_k(\Gamma)$. Hence we need only consider the groups in \eqref{congruencesubgroupseq}.

The condition in Lemma~\ref{Phifylemma} ii) is satisfied for the normal subgroup $\Gamma(2)$ of $\Sp(4,\Z)$, and then also for any subgroup containing $\Gamma(2)$. Up to conjugation, this covers all groups in \eqref{congruencesubgroupseq} except $\Gamma_0'(4)$. For $\Gamma_0'(4)$ one can verify the condition directly using the representatives $y$ given in Table~\ref{doublecosetsQtable}.
\end{proof}

We turn to even weights, considering the case $k\geq6$. Recall that for the codimension formula in Theorem~\ref{thmcodim} we need $|X|$, which is the cardinality of the double coset space $\Gamma\backslash\Sp(4,\Q)/P(\Q)$, and we need to know the groups $\Gamma_y$ defined in \eqref{Gammaydefeq}, where $y$ runs through a system of representatives for the double coset space $\Gamma\backslash\Sp(4,\Q)/Q(\Q)$.

For $\Gamma$ equal to the principal congruence subgroup $\Gamma(2)$, it is well known that both double coset spaces have 15 elements, and that each group $\Gamma_y$ equals $\Gamma^{(1)}(2)$. A quick derivation uses the fact that $\Sp(4,\mathbb{F}_2)\cong S_6$ has 720 elements (see Sect.~\ref{Igusasec}). Note $\Sp(4,\Q)/P(\Q)\cong\Sp(4,\Z)/P(\Z)$ and $\Sp(4,\Q)/Q(\Q)\cong\Sp(4,\Z)/Q(\Z)$. Hence
\begin{equation}\label{Gamma2cuspseq1}
 \Gamma(2)\backslash\Sp(4,\Q)/P(\Q)\cong\Sp(4,\mathbb{F}_2)/P(\mathbb{F}_2)\quad\text{and}\quad
 \Gamma(2)\backslash\Sp(4,\Q)/Q(\Q)\cong\Sp(4,\mathbb{F}_2)/Q(\mathbb{F}_2).
\end{equation}
Since $P(\mathbb{F}_2)$ and $Q(\mathbb{F}_2)$ both have 48 elements, it follows that both double coset spaces have cardinality 15. Moreover, since $\Gamma(2)$ is normal in $\Sp(4,\Z)$, each $\Gamma_y$ equals $\omega(\Gamma\cap Q(\Q))=\SL(2,\Z)\cap\mat{\Z}{2\Z}{2\Z}{\Z}$, which is conjugate (by an element of $\SL(2,\Q)$) to $\Gamma_0^{(1)}(4)$. From Theorem~\ref{thmcodim} we thus get $\dim M_k(\Gamma(2))-\dim S_k(\Gamma(2))=15+15\dim S_k(\Gamma_0^{(1)}(4))$ for even $k\geq6$.

For the congruence subgroups in \eqref{congruencesubgroupseq} other than $\Gamma(2)$, the last column of Table~\ref{doublecosetsPtable} shows the cardinality of $\Gamma\backslash\Sp(4,\Q)/P(\Q)$. The table also indicates representatives for this double coset space, using the following notations:
\begin{align}\label{xidef}
 x_1&=\mathbf{I}_4\qquad x_2=s_2=\begin{bsmallmatrix}&&1\\&1\\-1\\&&&1\end{bsmallmatrix}\qquad x_3=s_1s_2=\begin{bsmallmatrix}&1\\&&1\\&&&1\\-1\end{bsmallmatrix}\qquad x_4=s_2s_1s_2=\begin{bsmallmatrix}&&&1\\&&1\\&-1\\-1\end{bsmallmatrix}\nonumber\\
 x_5&=\begin{bsmallmatrix}1\\&1\\&&1\\&2&&1\end{bsmallmatrix}\qquad x_6=\begin{bsmallmatrix}1\\&1\\2&&1\\&2&&1\end{bsmallmatrix}\qquad x_7=\begin{bsmallmatrix}1\\&1\\&2&1\\2&&&1\end{bsmallmatrix}\qquad x_8=\begin{bsmallmatrix}&1\\&&1\\&2&&1\\-1\end{bsmallmatrix}.
\end{align}
For the group $\Sp(4,\Z)$ the information in Table~\ref{doublecosetsPtable} is trivial, for $K(2)$ and $K(4)$ see \cite[Thm.~1.3]{PoorYuen2013}. Representatives for $\Gamma_0(2)$ follow from
\begin{equation}\label{Gamma02Peq}
 \Gamma_0(2)\backslash\Sp(4,\Q)/P(\Q)\cong P(\mathbb{F}_2)\backslash\Sp(4,\mathbb{F}_2)/P(\mathbb{F}_2)
\end{equation}
and the Bruhat decomposition; similarly for $\Gamma_0'(2)$ and $B(2)$. For $\Gamma_0'(4)$ and $M(4)$ see \cite[Lemma~1, Lemma~2]{Yi2021}. For $\Gamma=\Gamma_0(4)$ see \cite[Prop.~2.6]{Tsushima2003}. It is an exercise to derive the result for $\Gamma_0^*(4)$ from that for $\Gamma_0(4)$, using that $\Gamma_0(4)=\Gamma_0^{\ast}(4)\sqcup\Gamma_0^{\ast}(4)s_1$.

\begin{table}
 \caption{Double coset representatives for $\Gamma\backslash\Sp(4,\Q)/P(\Q)$.}
 \label{doublecosetsPtable}
\begin{equation*}\setlength{\arraycolsep}{0.8ex}
 \begin{array}{cccccccccccc}
  \toprule
   \Gamma&x_1&x_2&x_3&x_4&x_5&x_6&x_7&x_8&\#\\
  \midrule
   \Sp(4,\Z)&\bullet&&&&&&&&1\\
  \midrule
   K(2)&\bullet&&&&&&&&1\\
  \midrule
   K(4)&\bullet&&&&&&\bullet&&2\\
  \midrule
   \Gamma_0(2)&\bullet&\bullet&&\bullet&&&&&3\\
  \midrule
   \Gamma_0(4)&\bullet&&\bullet&\bullet&\bullet&\bullet&\bullet&\bullet&7\\
  \midrule
   \Gamma_0^*(4)&\bullet&&\bullet&\bullet&\bullet&\bullet&\bullet&\bullet&7\\
  \midrule
   \Gamma_0'(2)&\bullet&&\bullet&&&&&&2\\
  \midrule
   \Gamma_0'(4)&\bullet&&\bullet&&\bullet&&\bullet&&4\\
  \midrule
   M(4)&\bullet&&\bullet&&\bullet&&&&3\\
  \midrule
   B(2)&\bullet&\bullet&\bullet&\bullet&&&&&4\\
  \bottomrule
 \end{array}
\end{equation*}
\end{table}

Table~\ref{doublecosetsQtable} gives double coset representatives for $\Gamma\backslash\Sp(4,\Q)/Q(\Q)$. The notation used is
\begin{align}\label{yidef}
 y_1&=\mathbf{I}_4\qquad y_2=s_1=\begin{bsmallmatrix}&1\\1\\&&&1\\&&1\end{bsmallmatrix}\qquad y_3=s_2s_1=\begin{bsmallmatrix}&&&1\\1\\&-1\\&&1\end{bsmallmatrix}\qquad y_4=s_1s_2s_1=\begin{bsmallmatrix}1\\&&&1\\&&1\\&-1\end{bsmallmatrix}\nonumber\\
 y_5&=\begin{bsmallmatrix}1\\&1\\&&1\\&2&&1\end{bsmallmatrix}\qquad y_6=\begin{bsmallmatrix}1&-2\\&1\\&&1\\&&2&1\end{bsmallmatrix}\qquad y_7=\begin{bsmallmatrix}1\\&1\\&2&1\\2&&&1\end{bsmallmatrix}\qquad y_8=\begin{bsmallmatrix}&1\\1\\&2&&1\\&&1\end{bsmallmatrix}\qquad y_9=\begin{bsmallmatrix}&1\\1\\2&&&1\\&2&1\end{bsmallmatrix}.
\end{align}
A non-empty entry in the row for $\Gamma$ and the column for $y_i$ indicates that $y_i$ is to be included in the set $Y$ of representatives for $\Gamma\backslash\Sp(4,\Q)/Q(\Q)$. The entry itself indicates the group $\Gamma_{y_i}$, obtained by intersecting the given set with $\SL(2,\Q)$. For most of the groups the references are the same as given above for Table~\ref{doublecosetsPtable}. For $\Gamma=\Gamma_0(4)$ see \cite[Prop.~2.5]{Tsushima2003}. Again the result for $\Gamma_0^*(4)$ can be derived from that for $\Gamma_0(4)$.

\begin{table}
 \caption{Double coset representatives for $\Gamma\backslash\Sp(4,\Q)/Q(\Q)$. The groups $\Gamma_y$ defined in \eqref{Gammaydefeq} are obtained by intersecting the given sets of $2\times2$ matrices with $\SL(2,\Q)$.}\label{doublecosetsQtable}
\begin{equation*}\setlength{\arraycolsep}{0.8ex}
 \begin{array}{cccccccccccc}
  \toprule
   \Gamma&y_1&y_2&y_3&y_4&y_5&y_6&y_7&y_8&y_9&\#\\
  \midrule
   \Sp(4,\Z)&\mat{\Z}{\Z}{\Z}{\Z}&&&&&&&&&1\\
  \midrule
   K(2)&\mat{\Z}{\Z}{\Z}{\Z}&\mat{\Z}{2^{-1}\Z}{2\Z}{\Z}&&&&&&&&2\\
  \midrule
   K(4)&\mat{\Z}{\Z}{\Z}{\Z}&\mat{\Z}{4^{-1}\Z}{4\Z}{\Z}&&&&\mat{\Z}{\Z}{2\Z}{\Z}&&&&3\\
  \midrule
   \Gamma_0(2)&\mat{\Z}{\Z}{2\Z}{\Z}&&\mat{\Z}{\Z}{2\Z}{\Z}&&&&&&&2\\
  \midrule
   \Gamma_0(4)&\mat{\Z}{\Z}{4\Z}{\Z}&&&\mat{\Z}{4\Z}{\Z}{\Z}&\mat{\Z}{\Z}{4\Z}{\Z}&&\mat{\Z}{\Z}{4\Z}{\Z}&&&4\\
  \midrule
   \Gamma_0^*(4)&\mat{\Z}{\Z}{4\Z}{\Z}&&&\mat{\Z}{4\Z}{\Z}{\Z}&\mat{\Z}{\Z}{4\Z}{\Z}&&\mat{\Z}{\Z}{4\Z}{\Z}&\mat{\Z}{\Z}{4\Z}{\Z}&&5\\
  \midrule
   \Gamma_0'(2)&\mat{\Z}{\Z}{\Z}{\Z}&\mat{\Z}{\Z}{2\Z}{\Z}&&\mat{\Z}{\Z}{\Z}{\Z}&&&&&&3\\
  \midrule
   \Gamma_0'(4)&\mat{\Z}{\Z}{\Z}{\Z}&\mat{\Z}{\Z}{4\Z}{\Z}&&\mat{\Z}{\Z}{\Z}{\Z}&\mat{\Z}{\Z}{\Z}{\Z}&\mat{\Z}{\Z}{2\Z}{\Z}&&&\mat{\Z}{\Z}{4\Z}{\Z}&6\\
  \midrule
   M(4)&\mat{\Z}{\Z}{\Z}{\Z}&\mat{\Z}{2^{-1}\Z}{4\Z}{\Z}&&\mat{\Z}{\Z}{\Z}{\Z}&&\mat{\Z}{\Z}{2\Z}{\Z}&&&\mat{\Z}{2^{-1}\Z}{4\Z}{\Z}&5\\
  \midrule
   B(2)&\mat{\Z}{\Z}{2\Z}{\Z}&\mat{\Z}{\Z}{2\Z}{\Z}&\mat{\Z}{\Z}{2\Z}{\Z}&\mat{\Z}{\Z}{2\Z}{\Z}&&&&&&4\\
  \bottomrule
 \end{array}
\end{equation*}
\end{table}

The following generating series can easily be derived from well-known dimension formulas; see, for example, \cite[Thm.~3.5.1]{DiamondShurman2005}.
\begin{align}
 \label{SkSL2Zdim}&\sum_{k=0}^\infty \dim  S_k(\SL(2,\Z))t^k=\sum_{\substack{k=6\\ k\text{ even}}}^\infty \dim  S_k(\SL(2,\Z))t^k=\frac{t^{12}}{(1-t^4)(1-t^6)}.\\
 \label{SkGamma02dim}&\sum_{k=0}^\infty \dim  S_k(\Gamma_0^{(1)}(2))t^k=\sum_{\substack{k=6\\ k\text{ even}}}^\infty \dim  S_k(\Gamma_0^{(1)}(2))t^k=\frac{t^{8}}{(1-t^2)(1-t^4)}.\\
 \label{SkGamma04dim}&\sum_{k=0}^\infty \dim  S_k(\Gamma_0^{(1)}(4))t^k=\sum_{\substack{k=6\\ k\text{ even}}}^\infty \dim  S_k(\Gamma_0^{(1)}(4))t^k=\frac{t^6}{(1-t^2)^2}.
\end{align}
Using these formulas, Theorem~\ref{thmcodim} and the information in Tables~\ref{doublecosetsPtable} and~\ref{doublecosetsQtable}, we now get the following result.

\begin{theorem}
For even $k\ge 6$ and a congruence subgroup $\Gamma$ as in \eqref{congruencesubgroupseq}, the quantity $\dim M_k(\Gamma)-\dim S_k(\Gamma)$ is given as in Table~\ref{codimensionstable}.	
\end{theorem}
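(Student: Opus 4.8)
The plan is to apply Theorem~\ref{thmcodim} directly to each of the eleven congruence subgroups in \eqref{congruencesubgroupseq}, assembling the right-hand side of \eqref{codim} from the combinatorial data already tabulated. For a fixed $\Gamma$ and even $k\ge 6$, Theorem~\ref{thmcodim} gives $\dim M_k(\Gamma)-\dim S_k(\Gamma)=|X|+\sum_{y\in Y}\dim S_k(\Gamma_y)$, so all I need are two ingredients: the integer $|X|=\#\bigl(\Gamma\backslash\Sp(4,\Q)/P(\Q)\bigr)$, which is read off from the last column of Table~\ref{doublecosetsPtable}, and the list of groups $\Gamma_y$ as $y$ ranges over the representatives in Table~\ref{doublecosetsQtable}. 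Since $|X|$ contributes a constant term, the generating series $\sum_k \bigl(\dim M_k(\Gamma)-\dim S_k(\Gamma)\bigr)t^k$ will be $\frac{|X|\,t^6}{1-t^2}$ (accounting for the restriction to even $k\ge 6$) plus a sum of shifted copies of the one-variable series in \eqref{SkSL2Zdim}, \eqref{SkGamma02dim}, \eqref{SkGamma04dim}, one for each $y\in Y$.

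The only genuine work is to identify, for each entry $\Gamma_y$ appearing in Table~\ref{doublecosetsQtable}, which of the three congruence subgroups $\SL(2,\Z)$, $\Gamma_0^{(1)}(2)$, $\Gamma_0^{(1)}(4)$ it is conjugate to inside $\SL(2,\Q)$, so that the corresponding cusp-form generating series is known. The entries are all of the shape $\SL(2,\Z)\cap\smallpmat{\Z}{a^{-1}\Z}{b\Z}{\Z}$ with $ab\in\{1,2,4\}$; scaling by $\mathrm{diag}(t,t^{-1})$ for a suitable $t$ shows such a group is conjugate to $\Gamma_0^{(1)}(ab)$ (e.g. $\smallpmat{\Z}{2^{-1}\Z}{2\Z}{\Z}$ is conjugate to $\Gamma_0^{(1)}(4)$, $\smallpmat{\Z}{\Z}{4\Z}{\Z}$ and $\smallpmat{\Z}{4\Z}{\Z}{\Z}$ are conjugate to $\Gamma_0^{(1)}(4)$, etc.), and of course $\dim S_k$ is a conjugation invariant. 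Carrying this out row by row: for $\Gamma(2)$ one gets $15 + 15\dim S_k(\Gamma_0^{(1)}(4))$ as already noted in the text; for $\Sp(4,\Z)$ one gets $1+\dim S_k(\SL(2,\Z))$; for $K(2)$, $1 + \dim S_k(\SL(2,\Z)) + \dim S_k(\Gamma_0^{(1)}(4))$; and similarly for the remaining eight groups using the $\bullet$-pattern and group entries in the two tables. Each such computation is a finite bookkeeping exercise producing one row of Table~\ref{codimensionstable}.

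There is essentially no analytic obstacle here; the theorem does all the heavy lifting. The one point requiring mild care is the bookkeeping of conjugacy classes of the level-$1$ groups $\Gamma_y$ — in particular keeping straight that both $\smallpmat{\Z}{\Z}{4\Z}{\Z}$ and $\smallpmat{\Z}{4\Z}{\Z}{\Z}$ yield $\Gamma_0^{(1)}(4)$ up to $\SL(2,\Q)$-conjugacy, and that the $2^{-1}$ in entries like $\smallpmat{\Z}{2^{-1}\Z}{4\Z}{\Z}$ (for $M(4)$) still gives level $2\cdot 4 \cdot 2^{-1}=4$, hence $\Gamma_0^{(1)}(4)$ again. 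A secondary point is remembering that the tables and the references cited (\cite{PoorYuen2013,Tsushima2003,Yi2021} and the Bruhat decomposition for $\Gamma_0(2),\Gamma_0'(2),B(2)$) already establish the correctness of the $|X|$-values and the $\Gamma_y$, so I may invoke them directly. After substituting the known series \eqref{SkSL2Zdim}–\eqref{SkGamma04dim} and simplifying each rational function, one reads off Table~\ref{codimensionstable}, completing the proof.
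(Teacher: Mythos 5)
Your overall strategy is exactly the paper's: feed the double-coset data of Tables~\ref{doublecosetsPtable} and~\ref{doublecosetsQtable} into Theorem~\ref{thmcodim} and sum the known generating series \eqref{SkSL2Zdim}--\eqref{SkGamma04dim}. The problem is the one step you flag as needing ``mild care'': your rule for identifying the groups $\Gamma_y$ up to $\SL(2,\Q)$-conjugacy is wrong. Conjugation by a diagonal matrix $\mathrm{diag}(t_1,t_2)$ sends $\SL(2,\Q)\cap\mat{\Z}{u\Z}{v\Z}{\Z}$ to $\SL(2,\Q)\cap\mat{\Z}{su\Z}{s^{-1}v\Z}{\Z}$ with $s=t_1/t_2$, so the invariant is the \emph{product} $uv$ of the two off-diagonal ideals; hence $\SL(2,\Q)\cap\mat{\Z}{a^{-1}\Z}{b\Z}{\Z}$ is conjugate to $\Gamma_0^{(1)}(b/a)$, not to $\Gamma_0^{(1)}(ab)$. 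In particular $\mat{\Z}{2^{-1}\Z}{2\Z}{\Z}$ and $\mat{\Z}{4^{-1}\Z}{4\Z}{\Z}$ are conjugate to $\SL(2,\Z)$ (conjugate by $\mathrm{diag}(2,1)$, resp.\ $\mathrm{diag}(4,1)$; note they contain the Fricke element $\smallpmat{0}{1/2}{-2}{0}$ resp.\ $\smallpmat{0}{1/4}{-4}{0}$, so they cannot have the covolume of a proper congruence subgroup), and $\mat{\Z}{2^{-1}\Z}{4\Z}{\Z}$ is conjugate to $\Gamma_0^{(1)}(2)$, not $\Gamma_0^{(1)}(4)$.

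Carried out as written, your bookkeeping therefore contradicts Table~\ref{codimensionstable} in at least three rows: for $K(2)$ you state $1+\dim S_k(\SL(2,\Z))+\dim S_k(\Gamma_0^{(1)}(4))$, whereas the correct value is $1+2\dim S_k(\SL(2,\Z))$ (i.e.\ $\alpha=2$, $\gamma=0$); similarly $K(4)$ must come out as $2+2\dim S_k(\SL(2,\Z))+\dim S_k(\Gamma_0^{(1)}(2))$ and $M(4)$ as $3+2\dim S_k(\SL(2,\Z))+3\dim S_k(\Gamma_0^{(1)}(2))$, since both occurrences of $\mat{\Z}{2^{-1}\Z}{4\Z}{\Z}$ give level $2$, not level $4$. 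Once the identification rule is corrected to ``product of the off-diagonal ideals,'' the row-by-row computation does reproduce Table~\ref{codimensionstable}, and the rest of your argument is fine (one small remark: no shifting of the series \eqref{SkSL2Zdim}--\eqref{SkGamma04dim} is needed, as they are already supported in even weights $k\geq6$; the constant $|X|$ contributes $|X|\,t^6/(1-t^2)$ to the even-weight generating series).
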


\emph{Remark:} After we calculate $\dim M_4(\Gamma)$ and $\dim S_4(\Gamma)$ in the next section, it will turn out that the codimension formulas in Table~\ref{codimensionstable} also hold for $k=4$. See \cite{BochererIbukiyama2012} for other cases in which Satake's method still works for $k=4$.

\begin{table}
 \caption{Codimension formulas valid for even weight $k\geq6$, given in the form $\dim M_k(\Gamma)-\dim S_k(\Gamma)=\alpha\dim S_k(\SL(2,\Z))+\beta\dim S_k(\Gamma_0^{(1)}(2))+\gamma\dim S_k(\Gamma_0^{(1)}(4))+\delta$.}
 \label{codimensionstable}
\begin{equation*}\setlength{\arraycolsep}{1ex}\renewcommand{\arraystretch}{2}
 \begin{array}{ccccccc}
  \toprule
   \Gamma&\alpha&\beta&\gamma&\delta&\displaystyle\sum_{k=6}^\infty(\dim M_k(\Gamma)-\dim S_k(\Gamma))t^k\\
  \toprule
   \Gamma(2)&0&0&15&15&15\displaystyle\frac{t^6(2-t^2)}{(1-t^2)^2}\\
  \midrule
   \Sp(4,\Z)&1&0&0&1&\displaystyle\frac{t^6(1+t^2-t^8)}{(1-t^4)(1-t^6)}\\
  \midrule
   K(2)&2&0&0&1&\displaystyle\frac{t^6(1+t^2+t^6-t^8)}{(1-t^4)(1-t^6)}\\
  \midrule
   K(4)&2&1&0&2&\displaystyle\frac{t^6(2+3t^2+t^4+t^6-2t^8)}{(1-t^4)(1-t^6)}\\
  \midrule
   \Gamma_0(2)&0&2&0&3&\displaystyle\frac{t^6(3+2t^2-3t^4)}{(1-t^2)(1-t^4)}\\
  \midrule
   \Gamma_0(4)&0&0&4&7&\displaystyle\frac{t^6(11-7t^2)}{(1-t^2)^2}\\
  \midrule
   \Gamma_0^*(4)&0&0&5&7&\displaystyle\frac{t^6(12-7t^2)}{(1-t^2)^2}\\
  \midrule
   \Gamma_0'(2)&2&1&0&2&\displaystyle\frac{t^6(2+3t^2+t^4+t^6-2t^8)}{(1-t^4)(1-t^6)}\\
  \midrule
   \Gamma_0'(4)&3&1&2&4&\displaystyle\frac{t^6(6+9t^2+5t^4+2t^6-4t^8)}{(1-t^4)(1-t^6)}\\
  \midrule
   M(4)&2&3&0&3&\displaystyle\frac{t^6(3+6t^2+3t^4+2t^6-3t^8)}{(1-t^4)(1-t^6)}\\
  \midrule
   B(2)&0&4&0&4&\displaystyle4\frac{t^6(1+t^2-t^4)}{(1-t^2)(1-t^4)}\\
  \bottomrule
 \end{array}
\end{equation*}
\end{table}
\subsection{Dimension formulas for some congruence subgroups}\label{Igusasec}
In this section we determine $\dim M_k(\Gamma)$ and $\dim S_k(\Gamma)$ for all non-negative integers $k$ and all congruence subgroups $\Gamma$ in \eqref{congruencesubgroupseq} except $\Gamma_0'(4)$. Many of the dimension formulas for these groups have appeared before in the literature, but to the best of our knowledge the groups $\Gamma_0^*(4)$ and $M(4)$ have not been previously considered. References are contained in Tables~\ref{dimMktable} and~\ref{dimSktable}. Except for $K(2)$ and $\Gamma_0'(4)$, the dimension of $M_k(\Gamma)$ for $\Gamma$ in \eqref{congruencesubgroupseq} can be determined from \cite[Thm.~2]{Igusa1964}. The method is well known, but we summarize it for completeness.

Let $\Gamma$ be a congruence subgroup of $\Sp(4,\Q)$ for which there exists an element $g\in\Sp(4,\Q)$ such that the group $\tilde\Gamma:=g\Gamma g^{-1}$ satisfies $\Gamma(2)\subset\tilde\Gamma\subset\Sp(4,\Z)$. Evidently $\dim M_k(\Gamma)=\dim M_k(\tilde\Gamma)$. The group $\Sp(4,\Z)/\Gamma(2)\cong\Sp(4,\mathbb{F}_2)\cong S_6$ acts naturally on the space $M_k(\Gamma(2))$. The character of this action has been determined in \cite[Thm.~2]{Igusa1964}. The space $M_k(\tilde\Gamma)$ is the fixed space of this action under the subgroup $\tilde\Gamma/\Gamma(2)$. Hence we can use formula \eqref{dimUH} and \cite[Thm.~2]{Igusa1964} to calculate $\dim M_k(\tilde\Gamma)$. All we need to know is how many elements of each conjugacy class of $S_6$ are contained in $\tilde\Gamma/\Gamma(2)$. For our subgroups of interest, we have already summarized this information in Table~\ref{conjugacyclassestable}.

\begin{proposition}\label{MkSkdimprop}
 With the possible exception of $\Gamma=\Gamma_0'(4)$, the generating series for $\dim M_k(\Gamma)$ given in Table~\ref{dimMktable} and for $\dim S_k(\Gamma)$ given in Table~\ref{dimSktable} hold.
\end{proposition}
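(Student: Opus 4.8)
The plan is to carry out exactly the program sketched before the statement. For every $\Gamma$ in \eqref{congruencesubgroupseq} other than $K(2)$ and $\Gamma_0'(4)$ there is $g\in\Sp(4,\Q)$ with $\Gamma(2)\subseteq\tilde\Gamma:=g\Gamma g^{-1}\subseteq\Sp(4,\Z)$, so $\dim M_k(\Gamma)=\dim M_k(\tilde\Gamma)$; since $\Gamma(2)$ is normal in $\Sp(4,\Z)$, the group $\Sp(4,\F_2)\cong S_6$ acts on $M_k(\Gamma(2))$ and $M_k(\tilde\Gamma)$ is the fixed subspace under the image $\bar\Gamma$ of $\tilde\Gamma$. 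Applying \eqref{dimUH} gives $\dim M_k(\tilde\Gamma)=|\bar\Gamma|^{-1}\sum_c N_c\,\chi_k(c)$, where $c$ runs over the eleven conjugacy classes of $S_6$, $N_c$ is the number of elements of $\bar\Gamma$ in class $c$ (read off from the row for $\Gamma$ in Table~\ref{conjugacyclassestable}), and $\chi_k(c)$ is the value on $c$ of the character of the $S_6$-action on $M_k(\Gamma(2))$, supplied by \cite[Thm.~2]{Igusa1964}. Igusa's formula presents $\sum_{k\ge0}\chi_k(c)t^k$ as an explicit rational function of $t$ for each $c$, so the weighted average is again an explicit rational function; verifying that it matches the entry of Table~\ref{dimMktable} is a finite computation, which en route also confirms that the bookkeeping in Table~\ref{conjugacyclassestable} is compatible with the explicit isomorphism $S_6\cong\Sp(4,\F_2)$ of \eqref{S6Sp4expleq1}. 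For $\Gamma=K(2)$ the generating series for $\dim M_k(K(2))$ is quoted verbatim from the reference indicated in Table~\ref{dimMktable}.

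For the cusp forms we combine this with the codimension results. Theorem~\ref{oddweightcodimtheorem} gives $S_k(\Gamma)=M_k(\Gamma)$ for all odd $k$, so the odd part of the generating series for $\dim S_k(\Gamma)$ is obtained directly from that for $\dim M_k(\Gamma)$. For even $k\ge6$, Theorem~\ref{thmcodim} together with Table~\ref{codimensionstable} gives $\dim S_k(\Gamma)=\dim M_k(\Gamma)-\bigl(\alpha\dim S_k(\SL(2,\Z))+\beta\dim S_k(\Gamma_0^{(1)}(2))+\gamma\dim S_k(\Gamma_0^{(1)}(4))+\delta\bigr)$ with $(\alpha,\beta,\gamma,\delta)$ as tabulated there; substituting \eqref{SkSL2Zdim}, \eqref{SkGamma02dim}, \eqref{SkGamma04dim} and the series for $\dim M_k(\Gamma)$ just obtained and manipulating rational functions yields the tail $k\ge6$ of the generating series in Table~\ref{dimSktable}. (For $\Gamma=K(2)$ the same computation applies, using the literature value of $\dim M_k(K(2))$.) Since Theorem~\ref{oddweightcodimtheorem} and the case $k\ge6$ already determine $\dim S_k(\Gamma)$ for every $k\ge4$ and every odd $k$, the only genuinely new data still required are the two values $\dim S_0(\Gamma)$ and $\dim S_2(\Gamma)$.

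The value $\dim S_0(\Gamma)=0$ is immediate, and $\dim M_0(\Gamma)=1$. For $k=2$ one notes that $\dim S_2(\SL(2,\Z))=\dim S_2(\Gamma_0^{(1)}(2))=\dim S_2(\Gamma_0^{(1)}(4))=0$, so the issue is purely whether the constant $\delta$ of Table~\ref{codimensionstable} still equals the codimension at weight $2$; this must be checked by hand because the surjectivity of the global $\Phi$ operator underlying Theorem~\ref{thmcodim} is only available for $k\ge6$ (and, by the remark following Table~\ref{codimensionstable} and \cite{BochererIbukiyama2012}, for $k=4$). Concretely, for each of the ten groups other than $\Gamma_0'(4)$ one has, after conjugation, $S_2(\Gamma)\cong S_2(\Gamma(2))^{\bar\Gamma}$, so it suffices to identify the $S_6$-module $S_2(\Gamma(2))$ — e.g. from Igusa's explicit description of the graded ring of modular forms for $\Gamma(2)$ and the location of its cuspidal ideal — and then apply \eqref{dimUH} once more; for $K(2)$ the weight-$2$ value is taken from the literature. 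This last point, the hands-on treatment of the low even weights (and the careful matching of Igusa's character table to the subgroup data of Table~\ref{conjugacyclassestable}), is the part I expect to require the most care; everything else is routine generating-function arithmetic.
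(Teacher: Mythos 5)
Your treatment of $\dim M_k(\Gamma)$ (Igusa's character formula via \eqref{dimUH} and Table~\ref{conjugacyclassestable}, with $K(2)$ quoted from the literature) and of $\dim S_k(\Gamma)$ for odd $k$ and for even $k\ge6$ matches the paper's argument. The genuine gap is at weight $4$. You assert that Theorem~\ref{oddweightcodimtheorem} together with the even case $k\ge6$ determines $\dim S_k(\Gamma)$ ``for every $k\ge4$'', but $k=4$ is even and outside the range of Theorem~\ref{thmcodim}; your fallback, namely the remark following Table~\ref{codimensionstable} and \cite{BochererIbukiyama2012}, does not close this. That remark states that the validity of the codimension formulas at $k=4$ \emph{will turn out} to hold \emph{after} $\dim M_4(\Gamma)$ and $\dim S_4(\Gamma)$ have been computed in this very proposition, so invoking it here is circular, and \cite{BochererIbukiyama2012} is cited only for \emph{other} cases in which Satake's method extends to $k=4$. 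As written, your proof does not establish the coefficient of $t^4$ in any of the series of Table~\ref{dimSktable}.

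The paper closes both low even weights at once by quoting Tsushima's result $S_4(\Gamma(2))=0$: this immediately gives $S_2(\Gamma(2))=0$ (a weight-$2$ cusp form would square to a nonzero element of $S_4(\Gamma(2))$), hence $S_2(\Gamma)=S_4(\Gamma)=0$ for every $\Gamma$ containing a conjugate of $\Gamma(2)$, while for $K(2)$ the full series for $S_k$ is taken from Ibukiyama's formula. Your proposed weight-$2$ route (computing the $S_6$-module $S_2(\Gamma(2))$ from Igusa's ring and taking $\bar\Gamma$-invariants) would work if you actually establish $S_2(\Gamma(2))=0$, but it is left as a sketch, and in any case it does not address weight $4$; you need an independent input such as $S_4(\Gamma(2))=0$ (or equivalent literature values) to finish the proposition.
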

\begin{proof}
The dimensions of $M_k(K(2))$ and $S_k(K(2))$ are given in \cite[Prop.~2]{IbukiyamaOnodera1997}. (Note that the generating series for $\dim S_k(K(2))$ given in \cite[Prop.~2]{IbukiyamaOnodera1997} is missing the odd weights. The correct formula, given in Table~\ref{dimSktable}, can be derived from the original source \cite[Thm.~4]{Ibukiyama1985}.) We may therefore assume that a conjugate of $\Gamma$ lies between $\Gamma(2)$ and $\Sp(4,\Z)$. For such $\Gamma$ the dimension of $M_k(\Gamma)$ can be derived from \eqref{dimUH} and \cite[Thm.~2]{Igusa1964}, as explained above. Hence we obtain the information in Table~\ref{dimMktable}.

The quantity $\sum_{k=6}^\infty\dim S_k(\Gamma)$ follows from Tables~\ref{codimensionstable} and~\ref{dimMktable}. It remains to explain $\dim S_k(\Gamma)$ for $k\in\{1,2,3,4,5\}$. For odd $k$ we have $\dim M_k(\Gamma)=\dim S_k(\Gamma)$ by Theorem~\ref{oddweightcodimtheorem}. We have $S_4(\Gamma(2))=0$ by \cite[p.~882]{Tsushima1982}. Hence also $S_2(\Gamma(2))=0$, and $S_2(\Gamma)=S_4(\Gamma)=0$ for all $\Gamma$ that contain a conjugate of $\Gamma(2)$. This concludes the proof.
\end{proof}

For illustration we have listed $\dim M_k(\Gamma)$ and $\dim S_k(\Gamma)$ for weights $k\le 20$ in Tables~\ref{Mklowweightstable}~and~\ref{Sklowweightstable}.
\section{Counting automorphic representations}\label{Sect:countingskpO}
This section contains our main results. In essence, we will use the dimension formulas proven or quoted so far in order to count the number of certain automorphic representations. Then we will use these counts to derive more dimension formulas. It is essential to consider the packet structure of the discrete automorphic spectrum of $\GSp(4,\A)$, which we recall first.
\subsection{Arthur packets}\label{Arthursec}
We recall from \cite{Arthur2004} that there are six types of automorphic representations of $\GSp(4,\A)$ in the discrete spectrum. We are only interested in representations with trivial central character, for which the description simplifies as follows.
\begin{itemize}
 \item The general type \textbf{(G)}: These representations are characterized by the fact that they lift to cusp forms on $\GL(4,\A)$ with trivial central character. They consist of finite, tempered and stable packets. The latter means that if $\pi\cong\otimes\pi_v$ is such a representation, and if one of the local components $\pi_w$ is part of a local $L$-packet $\{\pi_w,\pi_w'\}$, then $\pi':=\pi_w'\otimes\big(\otimes_{v\neq w}\pi_v\big)$ is also an automorphic representation in the discrete spectrum. All these representations are cuspidal.
 \item The Yoshida type \textbf{(Y)}: These packets are parametrized by pairs of distinct, cuspidal automorphic representations $\mu_1,\mu_2$ of $\GL(2,\A)$ with trivial central character. The packets are tempered and finite, but they are not stable. If $\pi\cong\otimes\pi_v$ is parametrized by $\mu_1\cong\otimes\mu_{1,v}$ and $\mu_2\cong\otimes\mu_{2,v}$, then the local $L$-parameter of $\pi_v$ is the direct sum of the $L$-parameters of $\mu_{1,v}$ and $\mu_{2,v}$. Given $\mu_1$ and $\mu_2$, if the $\pi_v$ are chosen from the local $L$-packets parametrized by $\mu_{1,v}$ and $\mu_{2,v}$, then $\pi=\otimes\pi_v$ belongs to the discrete spectrum if and only if the number of non-generic $\pi_v$ is even.
 \item The Saito-Kurokawa type \textbf{(P)}: These packets are parametrized by pairs $(\mu,\sigma)$, where $\mu$ is a cuspidal, automorphic representation of $\GL(2,\A)$ with trivial central character, and $\sigma$ is a quadratic Hecke character. We will see in Lemma~\ref{Psigmatrivlemma} below that only the case $\sigma=1$ is relevant to us, in which case we say that $\pi$ is a Saito-Kurokawa lift of $\mu$. The packets are finite, non-tempered and not stable. Given $\mu$, if the $\pi_v$ are chosen from local Arthur packets (listed in \cite[Table~2]{Schmidt2020}) parametrized by $\mu_v$, then $\pi=\otimes\pi_v$ belongs to the discrete spectrum if and only if the parity condition
 \begin{equation}\label{parityconditionP}
  \varepsilon(1/2,\mu)=(-1)^n
 \end{equation}
 is satisfied, where $n$ is the number of places where $\pi_v$ is \emph{not} the base point in the local Arthur packet.
 \item The Soudry type \textbf{(Q)}: These are parametrized by self-dual, cuspidal, automorphic representations of $\GL(2,\A)$ with non-trivial central character. The packets are non-tempered, infinite and stable. The local Arthur packets are given in \cite[Table~3]{Schmidt2020}.
 \item The Howe--Piatetski-Shapiro type \textbf{(B)}: The packets are parametrized by pairs of distinct, quadratic Hecke characters. They are non-tempered, infinite and unstable. The local Arthur packets are given in \cite[Table~1]{Schmidt2020}.
 \item The finite type \textbf{(F)}: These are one-dimensional representations. They are not relevant for this work, since they are not cuspidal.
\end{itemize}

We next determine how these types intersect with the representations of interest to us. For the following definition let $\Omega$ be one of the representation types I, IIa, IIb, \ldots, XIb, Va$^*$, $\sc$ appearing in Table~\ref{parahoricrestrictiontable}.
\begin{definition}\label{Skomegadef}
 Let $k$ be a positive integer. Let $S_k(\Omega)$ be the set of cuspidal automorphic representations $\pi\cong\otimes_v\pi_v$ of $\GSp(4,\A)$ with the following properties:
 \begin{enumerate}
  \item $\pi$ has trivial central character.
  \item  $\pi_{\infty}$ is the lowest weight module with minimal $K$-type $(k,k)$; it is a holomorphic discrete series representation if $k\geq3$, a holomorphic limit of discrete series representation if $k=2$, and a non-tempered representation if $k=1$. (It was denoted by $\mathcal{B}_{k,0}$ in \cite[Sect.~3.5]{Schmidt2017}.)
  \item $\pi_p$ is unramified for each finite $p\neq 2$.
  \item $\pi_2$ is an irreducible, admissible representation of $\GSp(4,\Q_2)$ of type $\Omega$ with non-trivial $\Gamma(\p)$-invariant vectors.
 \end{enumerate}
\end{definition}
We note a peculiarity about representation types Vb and Vc. While these occupy two different rows in Table~\ref{parahoricrestrictiontable}, the resulting sets of representations are identical, if the parameters in Table~\ref{parahoricrestrictiontable} are allowed to vary over all possibilities. Therefore $S_k(\text{Vb})=S_k(\text{Vc})$. In the following we will work with Vb and ignore Vc.

\begin{proposition}\label{Skomegaprop}
 Let $\Gamma$ be one of the congruence subgroups of $\Sp(4,\Q)$ in~\eqref{congruencesubgroupseq}. Suppose that $\pi$ is one of the cuspidal, automorphic representations of $\GSp(4,\A)$ generated by the adelization of some non-zero $f\in S_k(\Gamma)$. Then $\pi\in S_k(\Omega)$ for some $\Omega$.
\end{proposition}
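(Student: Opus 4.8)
The plan is to run the standard dictionary between scalar-valued Siegel cusp forms and automorphic representations and then feed the local data at $p=2$ through Theorem~\ref{localdimtheorem}~(ii); the argument is uniform in $\Gamma$, including $\Gamma=\Gamma_0'(4)$, which is accessible only through Theorem~\ref{localdimtheorem}~(ii). First I would adelize. By strong approximation for $\Sp(4)$ together with the fact that $\Q$ has class number one, $\GSp(4,\A)=\GSp(4,\Q)\,\GSp(4,\R)^{+}\,K_f$ for a suitable open compact $K_f\subset\GSp(4,\A_f)$ whose component at each finite prime $p$ contains (a conjugate of) the closure of $\Gamma$ in $\GSp(4,\Q_p)$. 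One attaches to $f$ a function $\Phi_f$ on $\GSp(4,\Q)\backslash\GSp(4,\A)$, well defined because $f$ has scalar weight $k$ with no character and because the centre is absorbed into the decomposition thanks to class number one; in particular $\Phi_f$ has trivial central character. The cusp condition defining $S_k(\Gamma)$ is exactly the vanishing of the constant terms of $\Phi_f$ along all proper parabolic $\Q$-subgroups, so $\Phi_f$ is a cusp form; since the cuspidal spectrum is semisimple and $\Phi_f$ is $K_\infty$-finite and smooth, the subrepresentation it generates is a finite direct sum of irreducibles, of which $\pi$ is one. Thus $\pi$ is cuspidal with trivial central character and $\pi\cong\otimes_v\pi_v$ by the tensor product theorem, which is condition (i) of Definition~\ref{Skomegadef}.

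For the archimedean place, every vector in the adelization of $S_k(\Gamma)$ transforms under $K_\infty\cong U(2)$ by $\det^{k}$ and is annihilated by the ``antiholomorphic'' summand $\mathfrak p^{-}$ of $\mathfrak{gsp}(4,\C)$, these two conditions together encoding that $f$ is holomorphic of weight $k$. Hence the irreducible constituent $\pi_\infty$ contains a nonzero lowest weight vector of weight $(k,k)$, and the classification of irreducible $(\mathfrak g,K_\infty)$-modules of $\GSp(4,\R)$ admitting such a vector (see \cite[Sect.~3.5]{Schmidt2017}) forces $\pi_\infty$ to be the lowest weight module $\mathcal B_{k,0}$: a holomorphic discrete series for $k\geq 3$, a holomorphic limit of discrete series for $k=2$, and the corresponding non-tempered module for $k=1$. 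This is condition (ii).

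For the finite places, every congruence subgroup in \eqref{congruencesubgroupseq} has level a power of $2$, so its closure in $\GSp(4,\Q_p)$ is the hyperspecial maximal compact $\GSp(4,\Z_p)$ for each odd $p$; therefore $\Phi_f$ is right $\GSp(4,\Z_p)$-invariant, $\pi_p^{\GSp(4,\Z_p)}\neq 0$, and $\pi_p$ is unramified, which is condition (iii). At $p=2$, let $C\subset\GSp(4,\Q_2)$ be the closure of the relevant conjugate of $\Gamma$; by the correspondence in Table~\ref{congruencesubgroupstable} it is exactly one of the local groups $\Gamma(\p)$, $K(\p)$, $K(\p^2)$, $\Gamma_0(\p)$, $\Gamma_0(\p^2)$, $\Gamma_0^*(\p^2)$, $\Gamma_0'(\p)$, $\Gamma_0'(\p^2)$, $M(\p^2)$, $B(\p)$ in the first row of Table~\ref{localdimtable} (with $\Sp(4,\Z)$ matched to $K$). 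Since $\Phi_f$ is $C$-invariant, $\pi_2^{C}\neq 0$, so Theorem~\ref{localdimtheorem}~(ii) gives $r_K(\pi_2)\neq 0$; equivalently $\pi_2$ is one of the representations of Table~\ref{parahoricrestrictiontable}, hence of some type $\Omega$ and with nonzero $\Gamma(\p)$-fixed vectors. Together with $\pi_2$ having trivial central character (inherited from $\pi$), this is condition (iv), and the four conditions give $\pi\in S_k(\Omega)$.

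The step requiring the most care is the archimedean one, precisely because $f$ need not be a Hecke eigenform: the representation it generates may be reducible, so one must check that the ``holomorphic of weight $k$'' conditions pass to every irreducible constituent (they do, as they cut out a subspace of the full cuspidal spectrum), and that the low-weight cases $k=1,2$ — where $\pi_\infty$ is a limit of discrete series or non-tempered rather than a discrete series — are correctly identified with $\mathcal B_{k,0}$. A secondary point worth a remark is the triviality of the central character, which rests on $\Q$ having class number one.
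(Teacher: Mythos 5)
Your argument is correct and follows essentially the same route as the paper: adelize $f$, observe that the resulting cusp form is spherical at odd primes and invariant under the local analogue of $\Gamma$ at $p=2$, invoke Theorem~\ref{localdimtheorem}~(ii) to conclude $r_K(\pi_2)\neq0$ so that $\pi_2$ is of some type $\Omega$ in Table~\ref{parahoricrestrictiontable}, and use holomorphy to identify $\pi_\infty$ as the lowest weight module of minimal $K$-type $(k,k)$. The only cosmetic difference is that the paper takes the local group $C_2$ to be the closure of $\Gamma$ in $\Sp(4,\Q_2)$ \emph{times} the multiplier matrices ${\rm diag}(1,1,x,x)$, $x\in\Z_2^\times$, rather than literally the closure of $\Gamma$, but this does not affect your argument.
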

\begin{proof}
Recall from \cite[Sect.~4.2]{Schmidt2017} (among other places) that the adelization $\Phi$ of $f\in S_k(\Gamma)$ is the unique function $G(\A)\to\C$ which is left invariant under $G(\Q)$, invariant under the center of $G(\A)$, right invariant under
\begin{equation}\label{Skomegapropeq1}
 C_2\times\prod_{\substack{p<\infty\\p\neq2}}G(\Z_p),
\end{equation}
and satisfies
\begin{equation}\label{Skomegapropeq2}
 \Phi(g)=(f|_kg)(\begin{bsmallmatrix}
 	i&\\&i
 \end{bsmallmatrix})\qquad\text{for all }g\in\Sp(4,\R).
\end{equation}
Here, $C_2$ is the congruence subgroup of $G(\Q_2)$ analogous to $\Gamma$, or more precisely, the closure of $\Gamma$ in $\Sp(4,\Q_2)$ times the the group of ``multiplier matrices'' ${\rm diag}(1,1,x,x)$ with $x\in\Z_2^\times$. 

If $\pi\cong\otimes\pi_v$ is one of the irreducible components of the representation generated by $\Phi$ under right translation, then it is clear that each $\pi_p$ for primes $p\neq2$ is spherical, and that $\pi_2$ contains non-zero $C_2$-invariant vectors. By Theorem~\ref{localdimtheorem}~ii) the representation $\pi_2$ contains non-zero $\Gamma(\p)$-invariant vectors. Finally, it follows from the holomorphy of $f$ that $\pi_\infty$ is a lowest weight representation minimal $K$-type $(k,k)$; see \cite{AsgariSchmidt2001}. Hence $\pi\in S_k(\Omega)$ for some $\Omega$.
\end{proof}

The point of Proposition~\ref{Skomegaprop} is that, if $\Gamma$ is one of the congruence subgroups in~\eqref{congruencesubgroupseq}, then no cuspidal, automorphic representation of $\GSp(4,\A)$ besides those in $S_k(\Omega)$, where $\Omega$ runs through the types occurring in Table~\ref{parahoricrestrictiontable}, will contribute to $S_k(\Gamma)$. Of course, the $S_k(\Omega)$ contribute to $S_k(\Gamma')$ for many other congruence subgroups $\Gamma'$ (for example, subgroups or conjugates of any of the $\Gamma$'s in~\eqref{congruencesubgroupseq}).

Let $S_k^{\text{\bf(G)}}(\Omega)$ be the subset of $\pi\in S_k(\Omega)$ that are of type \textbf{(G)}, and similarly for the other Arthur types. Let $s_k(\Omega)$ be the cardinality of $S_k(\Omega)$, and $s_k^{(*)}(\Omega)$ be the cardinality of $S_k^{(*)}(\Omega)$. Evidently,
\begin{equation}\label{SkArthurdecompeq}
 S_k(\Omega)=S_k^{\text{\bf(G)}}(\Omega)\,\sqcup\,S_k^{\text{\bf(Y)}}(\Omega)\,\sqcup\,S_k^{\text{\bf(P)}}(\Omega)\,\sqcup\,S_k^{\text{\bf(Q)}}(\Omega)\,\sqcup\,S_k^{\text{\bf(B)}}(\Omega),
\end{equation}
so that
\begin{equation}\label{no. of cup form0}
 s_k(\Omega)=s_k^{\text{\bf(G)}}(\Omega)+s_k^{\text{\bf(Y)}}(\Omega)+s_k^{\text{\bf(P)}}(\Omega)+s_k^{\text{\bf(Q)}}(\Omega)+s_k^{\text{\bf(B)}}(\Omega).
\end{equation}
It follows from Proposition~\ref{Skomegaprop} that
\begin{equation}\label{SMFarthurdecomp}
 S_{k}(\Gamma)=S^{\text{\bf(G)}}_k(\Gamma)\oplus S^{\text{\bf(Y)}}_k(\Gamma) \oplus S^{\text{\bf(P)}}_k(\Gamma) \oplus S^{\text{\bf(Q)}}_k(\Gamma)\oplus S^{\text{\bf(B)}}_k(\Gamma)
\end{equation}
for any of the congruence subgroups $\Gamma$ in~\eqref{congruencesubgroupseq}, the obvious notation being that elements of $S_k^{(*)}(\Omega)$ (for any possible $\Omega$) give rise to elements of $S_k^{(*)}(\Gamma)$. Hence
\begin{equation}\label{Dimension formula}
 \dim  S_k(\Gamma)=\dim  S^{\text{\bf(G)}}_k(\Gamma)+\dim  S^{\text{\bf(Y)}}_k(\Gamma)+\dim  S^{\text{\bf(P)}}_k(\Gamma)+\dim  S^{\text{\bf(Q)}}_k(\Gamma)+\dim  S^{\text{\bf(B)}}_k(\Gamma).
\end{equation}
\begin{proposition}\label{YQBzeroprop}
 Let $k$ be a positive integer. Then
 \begin{equation}\label{YQBzeropropeq1}
  S^{\text{\bf(Y)}}_k(\Omega)=S^{\text{\bf(Q)}}_k(\Omega)=S^{\text{\bf(B)}}_k(\Omega)=\emptyset
 \end{equation}
 for any $\Omega$, and hence
 \begin{equation}\label{YQBzeropropeq2}
  S^{\text{\bf(Y)}}_k(\Gamma)=S^{\text{\bf(Q)}}_k(\Gamma)=S^{\text{\bf(B)}}_k(\Gamma)=0
 \end{equation}
 for any of the congruence subgroups $\Gamma$  in~\eqref{congruencesubgroupseq}.
\end{proposition}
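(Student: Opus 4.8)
The plan is to rule out each of the three Arthur types \textbf{(Y)}, \textbf{(Q)} and \textbf{(B)} at the local place $p=2$, by showing that the $p=2$ component $\pi_2$ of any representation in such a packet cannot appear in Table~\ref{parahoricrestrictiontable}, i.e.\ cannot have non-zero $\Gamma(\p)$-invariant vectors. Since membership in $S_k(\Omega)$ requires $\pi_2$ to be of one of the types listed in that table, this immediately gives \eqref{YQBzeropropeq1}, and then \eqref{YQBzeropropeq2} follows from \eqref{SkArthurdecompeq}--\eqref{SMFarthurdecomp} (or directly from Proposition~\ref{Skomegaprop}, which forces any $\pi$ generated by an $f\in S_k(\Gamma)$ into some $S_k(\Omega)$).

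The key structural input is that for all three types \textbf{(Y)}, \textbf{(Q)}, \textbf{(B)} the global packet is built from cuspidal automorphic representations or Hecke characters of $\GL(2,\A)$ or $\GL(1,\A)$ that are unramified outside $2$, and whose component at $p=2$ is pinned down by the ramification behavior of the packet. For type \textbf{(B)}, parametrized by a pair of distinct quadratic Hecke characters $\chi_1\neq\chi_2$ unramified outside $2$: at least one of them must be ramified at $2$ (otherwise they would be equal, both being the trivial character on $\widehat{\Z}^\times$ up to sign, and a quadratic character of $\Q^\times\bs\A^\times$ unramified everywhere and trivial at $\infty$ is trivial). The corresponding local component $\pi_2$ then has a ramified character appearing in its data, hence — since hyperspecial parahoric restriction commutes with parabolic induction and the parahoric restriction of a ramified character of $\GL(1)$ or $\GL(2)$ vanishes — $r_K(\pi_2)=0$, so $\pi_2$ is not in Table~\ref{parahoricrestrictiontable}. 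For type \textbf{(Q)}, parametrized by a self-dual cuspidal representation of $\GL(2,\A)$ with \emph{non-trivial} central character: the central character is a quadratic Hecke character unramified outside $2$, hence (by the same argument) ramified at $2$; a $\GL(2,\Q_2)$ representation with ramified central character has conductor $\geq 2$ and ramified data, so again its local Arthur packet at $2$ consists of representations with vanishing hyperspecial parahoric restriction. For type \textbf{(Y)}, parametrized by a pair of distinct cuspidal representations $\mu_1,\mu_2$ of $\GL(2,\A)$ with trivial central character, both unramified outside $2$: here the point is that a holomorphic Yoshida lift of scalar weight $k$ forces the archimedean components $\mu_{1,\infty},\mu_{2,\infty}$ to be discrete series of the \emph{same} weight, and then distinctness of $\mu_1,\mu_2$ together with being unramified outside $2$ and of the same weight forces them to differ at $2$; moreover for $\pi_\infty$ to be holomorphic one of the two archimedean components of $\pi_\infty$ must be non-generic, which by the parity rule forces an \emph{odd} number of non-generic $\pi_v$ among the remaining places — but the only other place is $p=2$, and the non-generic member of the local $L$-packet at $2$ parametrized by $\mu_{1,2}\oplus\mu_{2,2}$ is of type VIb or VIIIb or similar, which one checks against Table~\ref{parahoricrestrictiontable}. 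I would handle \textbf{(Y)} by combining the weight/level constraints with the explicit list: if both $\mu_{i,2}$ were unramified then $\mu_1=\mu_2$ (a classical newform of level $1$ and given weight is unique up to scalar only when the space is $1$-dimensional, but more robustly: unramified everywhere and fixed weight with trivial nebentypus — actually the clean statement is that $\mu_1\ne\mu_2$ both unramified outside $2$ and holomorphic of the same weight must have at least one ramified at $2$, since otherwise they are level-$1$ forms and one can invoke that distinct level-$1$ forms have distinct Hecke eigenvalues at $2$ is not quite it — so I would instead argue directly that the resulting $\pi_2$ is a type VII--XI representation and consult Table~\ref{parahoricrestrictiontable} and the Arthur-packet tables of \cite{Schmidt2020} to see that no such $\pi_2$ has non-zero $\Gamma(\p)$-fixed vectors while simultaneously satisfying the $\pi_\infty$-holomorphy parity constraint).

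Concretely, the steps in order: (1) observe that a quadratic Hecke character of $\Q$ unramified outside $2$ and trivial at $\infty$ is either trivial or the unique non-trivial one ramified at $2$; deduce that in types \textbf{(B)} and \textbf{(Q)} there is genuine ramification at $2$. (2) Recall from \cite[Thm.~2.19]{Rosner2016} (cited in the paragraph after Table~\ref{parahoricrestrictiontable}) that hyperspecial parahoric restriction commutes with parabolic induction, and that the parahoric restriction of a ramified character of $\GL(1)$ or of a positive-conductor representation of $\GL(2)$ is zero; conclude $r_K(\pi_2)=0$ for types \textbf{(B)} and \textbf{(Q)}, whence these $\pi_2$ are absent from Table~\ref{parahoricrestrictiontable}. (3) For type \textbf{(Y)}, use that $\pi_\infty$ holomorphic of weight $k$ forces each $\mu_{i,\infty}$ to be the weight-$k$ discrete series and forces exactly one archimedean factor of the packet parameter to be non-generic, then invoke the discrete-spectrum parity condition (even number of non-generic $\pi_v$) to force $\pi_2$ non-generic; check the non-generic members of $L$-packets of Yoshida type against Table~\ref{localdimtable}/\ref{parahoricrestrictiontable}, noting that the only non-generic representations there with a partner sharing the Yoshida $L$-parameter shape are of types with vanishing $\Gamma(\p)$-invariants — ruling out $S_k^{\text{\bf(Y)}}(\Omega)$ as well. (4) Finally translate the emptiness of the sets into the vanishing of the corresponding spaces of cusp forms via \eqref{SMFarthurdecomp}.

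The main obstacle I expect is step (3): for types \textbf{(B)} and \textbf{(Q)} the argument is essentially a one-line ramification observation, but for \textbf{(Y)} one has to be careful about the interaction between the archimedean holomorphy constraint (which member of the archimedean $L$-packet $\pi_\infty$ is), the global parity condition ``number of non-generic $\pi_v$ is even'', and which non-generic $p=2$ representations actually occur in Table~\ref{parahoricrestrictiontable}. The cleanest route is probably to note that a holomorphic Siegel cusp form's archimedean component is always in the \emph{non-generic} (holomorphic) member of its discrete-series $L$-packet, so the parity condition forces $\pi_2$ to be generic; but the generic member of a Yoshida-type local $L$-packet at $2$ (with both $\mu_{i,2}$ in play) is a type VIa, X, or XIa representation whose occurrence in Table~\ref{parahoricrestrictiontable} would then need $\mu_{1,2}$ and $\mu_{2,2}$ both to have non-zero parahoric restriction on $\GL(2)$, i.e.\ depth $0$ — and then the global $\mu_1,\mu_2$ are unramified outside $2$, depth $0$ at $2$, and of the same weight at $\infty$, which by a dimension count (spaces of newforms of these small levels and the given weight) forces $\mu_1=\mu_2$, contradicting distinctness. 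I would need to make this dimension count precise, but it is exactly the kind of input the rest of the paper supplies.
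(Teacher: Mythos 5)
Your treatment of types \textbf{(B)} and \textbf{(Q)} is essentially the paper's argument (the parametrizing characters must be ramified somewhere, necessarily at $2$ if $\pi$ is to be unramified elsewhere, and then the local Arthur packet at $2$ involves ramified data, whereas every representation in Table~\ref{parahoricrestrictiontable} has unramified inducing data), and that part is fine. The genuine gap is in your step (3), the Yoshida case, and it starts with a wrong premise: a \emph{scalar}-weight-$k$ holomorphic $\pi_\infty$ does \emph{not} force $\mu_{1,\infty}$ and $\mu_{2,\infty}$ to have the same weight. Chasing the archimedean parameters, the only possibility is that (up to order) $\mu_1$ has weight $2k-2$ and $\mu_2$ has weight $2$. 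This asymmetry is precisely the lever the paper uses: $\mu_2$ corresponds to a weight-$2$ newform of level a power of $2$, and since $S_2^{(1)}(\Gamma_0^{(1)}(N))=0$ for $N=1,2,4$, its conductor at $2$ is at least $2^3$; then $\mu_{2,2}$ is too ramified for the $L$-parameter of $\pi_2$ to match any entry of Table~\ref{parahoricrestrictiontable} (compare the list of local Yoshida packets in \cite[Eq.~(16)]{SahaSchmidt2013}). Your equal-weight assumption erases this mechanism, and the fallback you propose cannot replace it.

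Two further points in (3) would fail even on their own terms. First, the parity bookkeeping is backwards: $\pi_\infty$ is the holomorphic, hence non-generic, member of its packet, all places $p\neq 2$ contribute generic (singleton) members, so the condition ``the number of non-generic $\pi_v$ is even'' forces $\pi_2$ to be \emph{non}-generic, not generic. Second, even with the correct parity, a genericity-based exclusion cannot work, because non-generic representations such as VIb, VIIIb and Va$^*$ do occur in Table~\ref{parahoricrestrictiontable} with non-zero $\Gamma(\p)$-fixed vectors; so one cannot rule out $\pi_2$ merely by knowing it is the non-generic member of its $L$-packet. Finally, the concluding ``dimension count forces $\mu_1=\mu_2$'' is not a valid step: for large weight there are many distinct newforms of the same weight and of level $1$, $2$ or $4$, so distinctness of $\mu_1,\mu_2$ cannot be contradicted this way. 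To repair the argument you should replace step (3) by the weight computation above (weights $2k-2$ and $2$) together with the vanishing of $S_2^{(1)}$ in levels $1,2,4$, which is exactly how the paper closes the Yoshida case.
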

\begin{proof}
For types \textbf{(Q)} or \textbf{(B)}, the proof is analogous to that of \cite[Prop.~2.1]{RoySchmidtYi2021}. If $\pi\cong\otimes\pi_v$ lies in an Arthur packet of type \textbf{(Q)} or \textbf{(B)}, then the characters parametrizing the packet are ramified at least at one prime $p$. A look at \cite[Table~1, Table~3]{Schmidt2020} shows that $\pi_p$ is not among the representations listed in Table~\ref{parahoricrestrictiontable}. (Recall that all the characters appearing in Table~\ref{parahoricrestrictiontable} are unramified.) Therefore $\pi\notin S_k(\Omega)$ for any $\Omega$.

Now consider a cuspidal, automorphic representation $\pi=\otimes\pi_v$ of type \textbf{(Y)}. Recall that the packet containing $\pi$ is parametrized by two distinct, cuspidal automorphic representations $\mu_1=\otimes\mu_{1,v}$ and $\mu_2=\otimes\mu_{2,v}$ of $\GL(2,\A)$ with trivial central character. Chasing through archimedean Langlands parameters, we see that in order for $\pi_\infty$ to be a lowest weight representation of weight $k$, the only possibility, up to order, is that $\mu_{1,\infty}$ is a discrete series representation of $\PGL(2,\R)$ of lowest weight $2k-2$, and $\mu_{2,\infty}$ is a discrete series representation of $\PGL(2,\R)$ of lowest weight $2$. Hence, $\mu_1$ corresponds to a newform $f_1\in S_{2k-2}(\Gamma_0^{(1)}(N_1))$ and $\mu_2$ corresponds to a newform $f_2\in S_2(\Gamma_0^{(1)}(N_2))$ for some levels $N_1,N_2$. If we want $\pi$ to be in $S_k(\Omega)$ for some $\Omega$, then $N_1$ and $N_2$ both have to be powers of $2$. Now $S_2(\Gamma_0^{(1)}(4))=0$, so that we would need $N_2=2^n$ for some $n\geq3$. But then the local component $\mu_{2,2}$, whose $L$-parameter is a direct summand of the $L$-parameter of $\pi_2$, is such that $\pi_2$ is not among the representations listed in Table~\ref{parahoricrestrictiontable}; see \cite[Eq.~(16)]{SahaSchmidt2013} for the possible local Yoshida packets. It follows that $\pi$ cannot be in $S_k(\Omega)$ for any $\Omega$.

Note that \eqref{YQBzeropropeq2} follows from \eqref{YQBzeropropeq1} in view of Proposition~\ref{Skomegaprop}.
\end{proof}

As a consequence of Proposition~\ref{YQBzeroprop},
\begin{equation}\label{SkArthurdecompeq2}
 S_k(\Omega)=S_k^{\text{\bf(G)}}(\Omega)\,\sqcup\,S_k^{\text{\bf(P)}}(\Omega),
\end{equation}
so that $s_k(\Omega)=s_k^{\text{\bf(G)}}(\Omega)+s_k^{\text{\bf(P)}}(\Omega)$, and
\begin{equation}\label{SMFarthurdecomp2}
 S_{k}(\Gamma)=S^{\text{\bf(G)}}_k(\Gamma)\oplus S^{\text{\bf(P)}}_k(\Gamma),
\end{equation}
so that $\dim  S_k(\Gamma)=\dim  S^{\text{\bf(G)}}_k(\Gamma)+\dim  S^{\text{\bf(P)}}_k(\Gamma)$. In Sect.~\ref{SKsec} we will determine the numbers $s_k^{\text{\bf(P)}}(\Omega)$.

The sets $S_k^{\text{\bf(G)}}(\Omega)$ are empty for certain $\Omega$, because the local Arthur packets (which are $L$-packets in this case) must contain a tempered element. The \textbf{(G)} column in Table~\ref{parahoricrestrictiontable} indicates which $\Omega$ can occur in packets of type \textbf{(G)}. Similarly, the sets $S_k^{\text{\bf(P)}}(\Omega)$ are empty for certain $\Omega$, because the local Arthur packets can only contain the representations listed in \cite[Table~2]{Schmidt2020}. The \textbf{(P)} column in Table~\ref{parahoricrestrictiontable} indicates which $\Omega$ can occur in packets of type \textbf{(P)}. We see that the only representations that can occur in packets of both Arthur types \textbf{(G)} and \textbf{(P)} or those of type VIb and Va$^*$.

Since Arthur packets of type {\bf(G)} are stable, one can switch within local $L$-packets and still retain the automorphic property. Most representations in Table~\ref{parahoricrestrictiontable} constitute singleton $L$-packets, except $\{\rm Va,Va^*\}$, $\{ \rm VIa, VIb\}$ and $\{ \rm VIIIa, VIIIb\}$, which constitute $2$-element $L$-packets ($\rm XIa$ is also part of an $L$-packet $\{ \rm XIa, XIa^*\}$, but $\rm XIa^*$ does not appear in Table~\ref{parahoricrestrictiontable}).

Hence $s_k^{\text{\bf(G)}}({\rm Va})=s_k^{\text{\bf(G)}}({\rm Va}^*)$, $s_k^{\text{\bf(G)}}({\rm VIa})=s_k^{\text{\bf(G)}}({\rm VIb})$, $s_k^{\text{\bf(G)}}({\rm VIIIa})=s_k^{\text{\bf(G)}}({\rm VIIIb})$ and we denote these common numbers as follows:
\begin{align}
 \label{skVaastar}
   s_k^{\text{\bf(G)}}({\rm Va/a}^*)&\colonequals s_k^{\text{\bf(G)}}({\rm Va})=s_k^{\text{\bf(G)}}({\rm Va}^*),\\
 \label{skVIab}
   s_k^{\text{\bf(G)}}({\rm VIa/b})&\colonequals s_k^{\text{\bf(G)}}({\rm VIa})=s_k^{\text{\bf(G)}}({\rm VIb}),\\
 \label{skVIIIab}
   s_k^{\text{\bf(G)}}({\rm VIIIa/b})&\colonequals s_k^{\text{\bf(G)}}({\rm VIIIa})=s_k^{\text{\bf(G)}}({\rm VIIIb}).
\end{align}
We observe from Table~\ref{localdimtable} that, for each of the congruence subgroups $H$ in this table, the dimension of the space of $H$-invariant vectors in a type IIIa (resp.\ VII) representation equals the sum of the dimensions of the spaces of $H$-invariant vectors for the $L$-packet VIa/b (resp.\ VIIIa/b). (The reason is that IIIa is a parabolically induced representation $\chi\rtimes\sigma\St_{\GSp(2)}$ for an unramified, non-trivial character $\chi$, and VIa/b are the two constitutents of the same induced representation when $\chi$ is trivial. Similarly, VII is $\chi\rtimes\pi$ for an unramified, non-trivial $\chi$ and VIIIa/b are the two constituents of the same induced representation with trivial $\chi$.) Since our methods cannot determine the numbers $s_k^{\text{\bf(G)}}({\rm IIIa})$ and $s_k^{\text{\bf(G)}}({\rm VIa/b})$ (resp. $s_k^{\text{\bf(G)}}({\rm VII})$ and $s_k^{\text{\bf(G)}}({\rm VIIIa/b})$) separately, we consider
\begin{align}
 \label{skGIIIaeq}
  s_k^{\text{\bf(G)}}({\rm IIIa+VIa/b})&\colonequals s_k^{\text{\bf(G)}}({\rm IIIa})+s_k^{\text{\bf(G)}}({\rm VIa/b}),\\
 \label{skGVIIeq}
  s_k^{\text{\bf(G)}}(\mathrm{VII+VIIIa/b})&\colonequals s_k^{\text{\bf(G)}}(\mathrm{VII})+s_k^{\text{\bf(G)}}(\mathrm{VIIIa/b}).
\end{align}
\subsection{Siegel modular forms and representations in \texorpdfstring{$S_k(\Omega)$}{}}
Consider $\pi\cong\bigotimes_{p\leq\infty}\pi_p\in S_k(\Omega)$. Recall that $\pi_2$ is an irreducible, admissible representation of $\PGSp(4,\Q_2)$ of type $\Omega$ with non-zero hyperspecial parahoric restriction $r_K(\pi_2)$. Let $C$ be one of the compact open subgroups in Table~\ref{parahoricrestrictiontable}, and let $\Gamma$ be the corresponding congruence subgroup of $\Sp(4, \Q)$. More precisely,
\begin{equation}\label{CGammaeq}
 \Gamma=\Sp(4,\Q)\cap\bigg(C\times\prod_{\substack{p<\infty\\p\neq2}}\GSp(4,\Z_p)\bigg).
\end{equation}
Then every eigenform (for the Hecke operators at all odd primes) $f\in S_k(\Gamma)$ arises from a vector in  $\pi_2^C$, for some $\pi\in S_k(\Omega)$, by a procedure similar to the one explained in \cite[Sect.~2.1]{RoySchmidtYi2021}. Thus we obtain the formula
\begin{equation}\label{dimSkdCOmegaeq1}
 \dim  S_k(\Gamma)=\sum_\Omega\sum_{\pi\in S_k(\Omega)}\dim \pi_2^C=\sum_\Omega s_k(\Omega)d_{C,\Omega},
\end{equation}
where $d_{C,\Omega}$ is the common dimension of the space of $C$-fixed vectors of the representations $\pi_2$ of type $\Omega$ with $r_K(\pi_2)\neq0$. The $d_{C,\Omega}$ are the numbers listed in Table~\ref{localdimtable}. Hence the equations \eqref{dimSkdCOmegaeq1} for all $\Gamma$ and all $\Omega$ are equivalent to the matrix equation

	\begin{equation}\label{mainmatrixeq1}
	\begin{bsmallmatrix}
		\dim S_k(\Gamma(2))\\
		\dim S_k(\Sp(4, \Z))\\
		\dim S_k(K(2))\\
		\dim S_k(K(4))\\
		\dim S_k(\Gamma_0(2))\\
		\dim S_k(\Gamma_0(4))\\
		\dim S_k(\Gamma_0^{\ast}(4))\\
		\dim S_k(\Gamma_0'(2))\\
		\dim S_k(\Gamma_0'(4))\\
		\dim S_k(M(2))\\
		\dim S_k(B(2))
	\end{bsmallmatrix}=
	\resizebox{0.6\textwidth}{!}{$
		\left[
		\begin{array}{cccccccccccccccccccc}
			45&30&15&30&16&21&9&25&5&5&15&10&5&10&15&10&5&1&9\\
			1&0&1&0&0&0&0&0&0&0&0&0&0&0&0&0&0&0&0\\
			2&1&1&0&0&0&1&0&0&1&0&0&0&0&0&0&0&0&0\\
			4&2&2&1&0&1&1&1&0&1&0&0&0&0&1&0&1&0&0\\
			4&1&3&2&0&0&1&1&1&0&0&0&0&0&0&0&0&0&0\\
			12&5&7&8&2&2&3&5&3&0&4&3&1&3&1&1&0&0&0\\
			15&8&7&10&4&5&3&7&3&1&5&4&1&4&7&4&3&1&3\\
			4&2&2&1&0&1&1&1&0&1&0&0&0&0&0&0&0&0&0\\
			11&7&4&5&2&5&2&5&0&2&2&2&0&1&3&2&1&0&1\\
			8&5&3&3&1&3&2&3&0&2&0&0&0&0&2&1&1&0&0\\
			8&4&4&4&1&2&2&3&1&1&0&0&0&0&0&0&0&0&0
		\end{array}\right]$}
	\begin{bsmallmatrix}
		s_k(\mathrm{I})\\
		s_k(\mathrm{IIa})\\
		s_k(\mathrm{IIb})\\
		s_k(\mathrm{IIIa})\\
		s_k(\mathrm{IVa})\\
		s_k(\mathrm{Va})\\
		s_k(\mathrm{Vb})\\
		s_k(\mathrm{VIa})\\
		s_k(\mathrm{VIb})\\
		s_k(\mathrm{VIc})\\
		s_k(\mathrm{VII})\\
		s_k(\mathrm{VIIIa})\\
		s_k(\mathrm{VIIIb})\\
		s_k(\mathrm{IXa})\\
		s_k(\mathrm{X})\\
		s_k(\mathrm{XIa})\\
		s_k(\mathrm{XIb})\\
		s_k(\mathrm{Va}^*)\\
		s_k(\sc(16))
	\end{bsmallmatrix}.
\end{equation}
Here, we have omitted those $\Omega$ which do not occur in packets of type \textbf{(G)} or \textbf{(P)}, because for these $s_k(\Omega)=0$ by Proposition~\ref{YQBzeroprop}. Note also that the class of representations of type Vb is the same as the class of representations of type Vc, since the parameter $\sigma$ in Table~\ref{parahoricrestrictiontable} runs through all possibilities. We therefore include only Vb in \eqref{mainmatrixeq1}.

The identity \eqref{mainmatrixeq1} still holds if we put a \textbf{(G)} or a \textbf{(P)} on all the $S_k(\Gamma)$ and all the $s_k(\Omega)$; this is the definition of the spaces $S^{\text{\bf(G)}}_k(\Gamma)$ and $S^{\text{\bf(P)}}_k(\Gamma)$. More of the $s_k^{(*)}(\Omega)$ will then be zero; see Table~\ref{parahoricrestrictiontable}. We will utilize the \textbf{(P)} version of \eqref{mainmatrixeq1} in the proof of Corollary~\ref{dimSkPcorollary}, and the \textbf{(G)} version in the proof of Theorem~\ref{skGtheorem}. More precisely, we will proceed as follows.
\begin{itemize}
 \item Exploiting the fact that packets of type \textbf{(P)} are parametrized by cuspidal, automorphic representations of $\GL(2,\A)$, the numbers $s^{\text{\bf(P)}}_k(\Omega)$ can be determined for all $\Omega$ from dimension formulas for elliptic modular forms. (Theorem~\ref{skPtheorem})
 \item We then use the \textbf{(P)} version of \eqref{mainmatrixeq1} to calculate $\dim S^{\text{\bf(P)}}_k(\Gamma)$ for all $\Gamma$ (Corollary~\ref{dimSkPcorollary}).
 \item Since we already determined $\dim S_k(\Gamma)$ for all $\Gamma$ except $\Gamma_0'(4)$, we can calculate $\dim S^{\text{\bf(G)}}_k(\Gamma)$ for all $\Gamma$ except $\Gamma_0'(4)$. (Proposition~\ref{dimSkGprop})
 \item Then we use the \textbf{(G)} version of \eqref{mainmatrixeq1}, with the row for $\Gamma_0'(4)$ omitted, to determine the $s^{\text{\bf(G)}}_k(\Omega)$. Here, it is necessary to combine some types $\Omega$ which cannot be distinguished by their fixed vector dimensions; see \eqref{skGIIIaeq}, \eqref{skGVIIeq}. This step reduces the number of unknowns to 10, the same as the number of equations. (Theorem~\ref{skGtheorem})
 \item Next we use the $\Gamma_0'(4)$-row of the \textbf{(G)} version of \eqref{mainmatrixeq1} to determine $\dim S^{\text{\bf(G)}}_k(\Gamma_0'(4))$. Since we already have $\dim S^{\text{\bf(P)}}_k(\Gamma_0'(4))$, this gives us $\dim S_k(\Gamma_0'(4))$. (Corollary~\ref{Klingen4corollary})
\end{itemize}
Finally, we will be able to fill in the row for $\dim M_k(\Gamma_0'(4))$ in Table~\ref{dimMktable}, using the codimension formula for $k\geq6$ given in Table~\ref{codimensionstable}, and the low weight results from Appendix~\ref{CDappendix}.
\subsection{Saito-Kurokawa type}\label{SKsec}
Recall from Sect.~\ref{Arthursec} that Arthur packets of type \textbf{(P)} are parametrized by pairs $(\mu,\sigma)$, where $\mu$ is a cuspidal, automorphic representation of $\GL(2,\A)$ with trivial central character, and $\sigma$ is a quadratic Hecke character.
\begin{lemma}\label{Psigmatrivlemma}
 Suppose that the cuspidal, automorphic representation $\pi$ lies in a packet of type \textrm{\bf{(P)}}, parametrized by the pair $(\mu,\sigma)$, where $\mu$ is a cuspidal, automorphic representation of $\GL(2,\A)$ with trivial central character, and $\sigma$ is a quadratic Hecke character. Suppose that also $\pi\in S_k(\Omega)$ for some $\Omega$. Then $\sigma$ is trivial.
\end{lemma}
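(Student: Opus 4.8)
The plan is to reduce the statement to the purely local assertion that $\sigma$ is unramified at every finite place of $\Q$, and then invoke the elementary fact that a quadratic Hecke character of $\Q$ unramified at all finite places is trivial. Indeed, such a $\sigma$ is trivial on $\widehat{\Z}^{\times}=\prod_p\Z_p^{\times}$, hence factors through $\A^{\times}/\Q^{\times}\widehat{\Z}^{\times}\cong\R_{>0}$, on which every character of finite order is trivial; equivalently, evaluating on the idele attached to $-1\in\Q^{\times}$ forces $\sigma_{\infty}(-1)=\prod_p\sigma_p(-1)=1$. So the whole argument comes down to controlling the components $\sigma_v$ at finite places.

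For a finite place $v\neq 2$ I would argue as follows. Since $\pi\in S_k(\Omega)$, the component $\pi_v$ is unramified, so it must be the unramified member of the local Saito--Kurokawa Arthur packet determined by $(\mu_v,\sigma_v)$, whose Langlands parameter is $\phi_{\mu_v}\oplus\sigma_v\nu^{1/2}\oplus\sigma_v\nu^{-1/2}$ (the parameter attached to the Saito--Kurokawa $A$-parameter built from $\mu$ and $\sigma$). An unramified Langlands parameter has only unramified summands, so $\sigma_v$ is unramified.

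The substantive case is $v=2$, and this is where I expect the main obstacle. By Definition~\ref{Skomegadef}, $\pi_2$ is an irreducible admissible representation of $\PGSp(4,\Q_2)$ of some type $\Omega$ occurring in Table~\ref{parahoricrestrictiontable} with non-zero $\Gamma(\p)$-invariant vectors; since $\pi$ is of Saito--Kurokawa type, $\Omega$ must be one of the types carrying a bullet in the \textbf{(P)} column, namely IIb, Vb, Vc, VIb, VIc, XIb, or Va$^{*}$. In each of these the character labelled $\sigma$ in the notation of Table~\ref{parahoricrestrictiontable} is, up to a harmless unramified twist by $\nu^{\pm1/2}$, precisely the local component $\sigma_2$ of the Hecke character parametrizing the packet, and that character is unramified. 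This is part of the standing hypotheses of Table~\ref{parahoricrestrictiontable}: it follows, for the non-supercuspidal $\Omega$, from exactness of $\Gamma(\p)$-invariants together with the fact that hyperspecial parahoric restriction commutes with parabolic induction \cite[Thm.~2.19]{Rosner2016}, so that $r_K(\pi_2)\neq 0$ forces the inducing characters of $\pi_2$ — in particular $\sigma_2$ — to have non-zero parahoric restriction, hence (as the residue field is $\F_2$) to be unramified; for the supercuspidal type Va$^{*}$ one uses instead its shared $L$-parameter with the representation of type Va, as recorded in Table~\ref{parahoricrestrictiontable} (compare the discussion around \eqref{Va*czk}). Combining the two cases yields $\sigma_v$ unramified at every finite place, and the reduction above then gives $\sigma=1$.

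The one point demanding genuine care is the bookkeeping at $p=2$: verifying that the character $\sigma_2$ coming from the global $A$-parametrization really is the character $\sigma$ appearing, up to normalisation, in the local Arthur packets of \cite[Table~2]{Schmidt2020} and in Table~\ref{parahoricrestrictiontable}, so that the unramifiedness built into that table applies to it. I also note that one cannot shortcut the argument via the central character of $\pi$: since the packet lives on $\PGSp(4)\cong\mathrm{SO}(5)$, the representation $\pi$ has trivial central character regardless of $\sigma$, and $\sigma$ enters the $A$-parameter rather than the central character.
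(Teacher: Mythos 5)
Your argument is correct and takes essentially the same route as the paper: both reduce the lemma to the unramifiedness of $\sigma_v$ at every finite place, using the spherical condition at $p\geq 3$ and the fact that $\pi_2$ occurs in Table~\ref{parahoricrestrictiontable} (whose inducing data are unramified) matched against the local packets of \cite[Table~2]{Schmidt2020}, and then conclude that a quadratic Hecke character of $\Q$ unramified at all finite places is trivial. The only, harmless, imprecision is your claim that the character labelled $\sigma$ in Table~\ref{parahoricrestrictiontable} is $\sigma_2$ up to a $\nu^{\pm1/2}$-twist: for some types the correct matchup involves a product with a further unramified character (such as $\chi$ or $\xi$), but since every character appearing in that table is unramified this does not affect the conclusion.
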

\begin{proof}
We write $\pi=\otimes\pi_v$ and $\sigma=\otimes\sigma_v$. The local representation $\pi_v$ occurs in \cite[Table~2]{Schmidt2020}, for any place~$v$. Since $\pi_p$ is spherical for $p\geq3$, we see from \cite[Table~2]{Schmidt2020} that $\sigma_p$ is unramified. Since $\pi_2$ occurs in Table~\ref{parahoricrestrictiontable}, inspecting \cite[Table~2]{Schmidt2020} shows that $\sigma_2$ is also unramified. Hence the character $\sigma$, being unramified everywhere, must be trivial. 
\end{proof}
If $\pi$ lies in a packet of type \textbf{(P)}, parametrized by the pair $(\mu,\sigma)$ with trivial $\sigma$ as in the lemma, then we say that ``$\pi$ is a Saito-Kurokawa lift of $\mu$''. Note that a given $\mu$ may have multiple Saito-Kurokawa lifts, depending on the size of the Arthur packet. As the proof of the next result shows, those $\mu$ corresponding to eigenforms in $S_k^{\rm new}(\Gamma_0^{(1)}(N))$ with $N\in\{2,4\}$ admit a unique \emph{holomorphic} Saito-Kurokawa lift.

\begin{theorem}\label{skPtheorem}
 The generating series for the numbers $s_k^{\text{\bf(P)}}(\Omega)$ given in Table~\ref{skPOmegatable} hold. If a representation type $\Omega$ is not listed in Table~\ref{skPOmegatable}, then $s_k^{\text{\bf(P)}}(\Omega)=0$ for all $k$.
\end{theorem}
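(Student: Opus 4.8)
The plan is to parametrize the holomorphic Saito-Kurokawa lifts in $S_k(\Omega)$ by elliptic newforms and then compute the local representation type at $p=2$ for each such lift. First I would recall from Lemma~\ref{Psigmatrivlemma} that every $\pi\in S_k^{\text{\bf(P)}}(\Omega)$ is a Saito-Kurokawa lift of a cuspidal automorphic representation $\mu$ of $\GL(2,\A)$ with trivial central character, that $\mu_p$ is unramified for all $p\geq 3$, and that $\mu_2$ has a $\Gamma_0^{(1)}(2^n)$-fixed vector for some $n$. Chasing archimedean parameters as in the proof of Proposition~\ref{YQBzeroprop}, the condition that $\pi_\infty$ be the lowest weight module of minimal $K$-type $(k,k)$ forces $\mu_\infty$ to be the discrete series (or limit of discrete series if $k=2$, or the appropriate non-tempered representation if $k=1$) of $\PGL(2,\R)$ of lowest weight $2k-2$. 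Hence $\mu$ corresponds to a newform in $S_{2k-2}^{\new}(\Gamma_0^{(1)}(N))$ with $N\mid 4$, i.e. $N\in\{1,2,4\}$; and since $S_{2k-2}(\SL(2,\Z))$-forms and $S_{2k-2}^{\new}(\Gamma_0^{(1)}(2))$-forms and $S_{2k-2}^{\new}(\Gamma_0^{(1)}(4))$-forms together exhaust all $\mu$ with $\mu_2$ having a depth-zero (in fact conductor $\le 2$) Iwahori-type vector, these are exactly the relevant cases.

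Next I would pin down, for each of the three cases $N\in\{1,2,4\}$, which representation type $\Omega$ the local component $\pi_2$ is. By the theory of local Arthur packets of Saito-Kurokawa type (\cite[Table~2]{Schmidt2020}), the base point of the local packet attached to $\mu_2$ is a specific representation, and the non-base-point is another; the global parity condition \eqref{parityconditionP}, $\varepsilon(1/2,\mu)=(-1)^n$ with $n$ the number of non-base-point places, together with the sign of the functional equation of the newform, determines which of the two local representations at $p=2$ occurs (the archimedean place being forced to be holomorphic, i.e. the non-base point, so that the holomorphic SK lift exists precisely when the global sign is $-1$, which is automatic for the relevant newforms of weight $2k-2$; this is the ``unique holomorphic Saito-Kurokawa lift'' remark preceding the theorem). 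Concretely: for $\mu$ unramified at $2$ (level $1$), $\mu_2$ is an unramified principal series and $\pi_2$ is of type IIb or VIc (the relevant base/non-base representation with a $\Gamma(\p)$-fixed vector); for $\mu_2$ Steinberg-type (level $2$) one gets type Vb/Vc or XIb; and for $\mu_2$ supercuspidal of conductor $2$ (the depth-zero supercuspidal, level $4$) one gets type VIb. I would cross-check each assignment against the \textbf{(P)} column of Table~\ref{parahoricrestrictiontable}, which lists exactly IIb, Vb (=Vc), VIb, VIc, XIb, and Va$^*$; noting that Va$^*$ and VIb are the two types that also appear in \textbf{(G)}, but a type-$\mathbf{(P)}$ representation of type Va$^*$ would require a ramified quadratic twist and so does not occur here, consistent with $s_k^{\text{\bf(P)}}(\mathrm{Va}^*)=0$.

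Having matched types to levels, the counting reduces to elliptic modular form dimensions: $s_k^{\text{\bf(P)}}(\Omega)$ equals the number of newforms in $S_{2k-2}^{\new}(\Gamma_0^{(1)}(N))$ for the appropriate $N$ (with a possible further split according to the sign of the Atkin-Lehner eigenvalue at $2$ when the $\mu$ with the same $\mu_\infty$ can produce two distinct local types at $2$). Then I would invoke the standard generating series $\sum_k \dim S_k^{\new}(\Gamma_0^{(1)}(N))t^k$ for $N=1,2,4$ (obtainable from \cite[Thm.~3.5.1]{DiamondShurman2005} by Möbius inversion over divisors of $N$), re-index $k\mapsto 2k-2$, account for the Atkin-Lehner split via the known dimensions of the $\pm$-eigenspaces $S_{2k-2}^{\new,\pm}(\Gamma_0^{(1)}(2))$, and read off the generating series in Table~\ref{skPOmegatable}. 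Finally, for any $\Omega$ not appearing among IIb, Vb, VIb, VIc, XIb, the above shows no $\pi\in S_k^{\text{\bf(P)}}(\Omega)$ can exist, giving $s_k^{\text{\bf(P)}}(\Omega)=0$; this is also immediate from the \textbf{(P)} column of Table~\ref{parahoricrestrictiontable} once Proposition~\ref{YQBzeroprop} has excluded types \textbf{(Y)}, \textbf{(Q)}, \textbf{(B)}. The main obstacle will be the bookkeeping in the middle step: correctly identifying the base point versus non-base point of each local Arthur packet of type \textbf{(P)} at $p=2$, tracking how the global parity condition \eqref{parityconditionP} interacts with the (forced non-base) archimedean place and with the Atkin-Lehner sign, and thereby deciding exactly which local type $\Omega$ is realized in each of the level-$1$, level-$2$, level-$4$ cases — and in particular whether a given newform yields one or two distinct Siegel-side types. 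Everything after that is a routine manipulation of known elliptic dimension formulas.
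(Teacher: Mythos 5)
Your overall strategy is the same as the paper's (parametrize holomorphic Saito--Kurokawa lifts by newforms of weight $2k-2$ and level $N\in\{1,2,4\}$, use the parity condition \eqref{parityconditionP} together with \cite[Table~2]{Schmidt2020} to identify $\pi_2$, then convert to generating series via elliptic dimension formulas), but the execution of the middle step — which you yourself flag as the main obstacle — is wrong in ways that would produce a table contradicting Table~\ref{skPOmegatable}. Most seriously, your claim that $s_k^{\text{\bf(P)}}(\mathrm{Va}^*)=0$ because type $\mathrm{Va}^*$ ``would require a ramified quadratic twist'' is false: the representation $\delta^*([\xi,\nu\xi],\nu^{-1/2}\sigma)$ of type $\mathrm{Va}^*$ involves only the \emph{unramified} quadratic character $\xi$ of $\Q_2^\times$, it carries a bullet in the \textbf{(P)} column of Table~\ref{parahoricrestrictiontable}, and it is realized as the non-base point of the local Arthur packet attached to $\mu_2=\xi\St_{\GL(2)}$. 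Concretely, a newform in $S_{2k-2}^{+,\mathrm{new}}(\Gamma_0^{(1)}(2))$ with $k$ odd has $\varepsilon(1/2,\mu)=+1$, so the parity condition forces $S=\{\infty,2\}$ and yields $\pi_2$ of type $\mathrm{Va}^*$; this is why Table~\ref{skPOmegatable} lists $\mathrm{Va}^*$ with the nonzero series $t^5/((1-t^4)(1-t^6))$.

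The root cause is your assertion that the holomorphic lift ``exists precisely when the global sign is $-1$, which is automatic.'' Neither half is right: $\varepsilon(1/2,\mu)=\varepsilon_\infty\varepsilon_2$ with $\varepsilon_\infty=(-1)^{k-1}$, so the sign depends on the parity of $k$ and on the Atkin--Lehner data at $2$; and when the sign is $+1$ a holomorphic lift still exists whenever $\mu_2$ is a discrete series, by taking $S=\{\infty,2\}$ (non-base point at $p=2$). Missing this option scrambles your type-to-level dictionary: in the paper's bookkeeping (Table~\ref{SKtable}), level $1$ gives IIb for $k$ even and \emph{no lift} for $k$ odd (singleton packet at $2$, parity unsatisfiable); level $2$ gives VIb ($+$, $k$ even), $\mathrm{Va}^*$ ($+$, $k$ odd), Vb ($-$, $k$ even), VIc ($-$, $k$ odd); level $4$ gives $\mathrm{XIa}^*$ ($k$ even, discarded since its hyperspecial parahoric restriction vanishes) and XIb ($k$ odd). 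Your assignments (VIc to level $1$, XIb to level $2$, VIb to level $4$) are therefore incorrect, and the final generating series you would assemble from the dimension formulas \eqref{SkSL2Zdim}--\eqref{SkGamma04dim} and \cite[Thm~2.2]{Martin2018} would not match Table~\ref{skPOmegatable}. The case-by-case determination of $\varepsilon_2$, of the base point versus non-base point in each local packet, and of the resulting type $\Omega$ is exactly the content of the paper's proof and cannot be treated as routine bookkeeping to be filled in later.
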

\begin{proof}
Table~\ref{SKtable} below shows several spaces of elliptic modular newforms, and how an eigenform in one of these spaces Saito-Kurokawa lifts to $\GSp(4,\A)$. The notation $S_{2k-2}^{\pm,\mathrm{new}}(\Gamma_0^{(1)}(N))$ indicates the subspace of $S_{2k-2}^{\mathrm{new}}(\Gamma_0^{(1)}(N))$ spanned by eigenforms with sign $\pm1$ in the functional equation of their $L$-function. If $\mu\cong\otimes\mu_v$ is the cuspidal, automorphic representation of $\GL(2,\A)$ corresponding to an eigenform in one of these spaces, then the sign in the functional equation coincides with the global $\varepsilon$-factor $\varepsilon(1/2,\mu)=\varepsilon_\infty\varepsilon_2$, where $\varepsilon_\infty:=\varepsilon(1/2,\mu_\infty)=(-1)^{k-1}$ and $\varepsilon_2:=\varepsilon(1/2,\mu_2)$. In the $\mu_2$ column of Table~\ref{SKtable} the symbol $\xi$ stands for the unique non-trivial, unramified, quadratic character of $\Q_2^\times$, and $\tau_2$ denotes the unique irreducible, admissible representation of $\GL(2,\Q_2)$ with trivial central character and conductor exponent $2$; it is a depth zero supercuspidal.

Now $\mu_v$, or rather the pair $(\mu_v,1_v)$, where $1_v$ is the trivial character of $\Q_v^\times$, determines a local Arthur packet consisting of one or two representations, for each place $v$. These local packets are explicitly given in \cite[Table~2]{Schmidt2020}, and each one of them contains a ``base point''. The packet is a singleton if and only if $\mu_v$ is not a discrete series representation, in which case the unique representation in the packet is also the base point. Recall from \eqref{parityconditionP} that in order for the global Saito-Kurokawa packet $\mu$ to contain the cuspidal, automorphic representation $\pi\cong\otimes\pi_v$ of $\GSp(4,\A)$, the parity condition $\varepsilon(1/2,\mu)=(-1)^n$ has to be satisfied, where $n$ is the number of places for which $\pi_v$ is not the base point in the local Arthur packet. Since we want $\pi$ to correspond to holomorphic Siegel modular forms, the set $S$ of places where $\pi_v$ is not the base point must include the archimedean place, the reason being that the non-base point in the archimedean local packet is the holomorphic discrete series representation of $\PGSp(4,\R)$ of lowest weight $(k,k)$. Hence the set $S$ must be $\{\infty\}$ if $\varepsilon(1/2,\mu)=-1$ and must be $\{\infty,2\}$ if $\varepsilon(1/2,\mu)=1$. The final column of Table~\ref{SKtable}, which can be read off \cite[Table~2]{Schmidt2020}, shows the type of $\pi_2$, the local component at $p=2$ of the unique cuspidal, automorphic representation $\pi$ in the global packet parametrized by $\mu$ which has the required discrete series representation at the archimedean place.

\begin{table}
 \caption{Some spaces of elliptic cusp forms and their Saito-Kurokawa lifts.}
 \label{SKtable}
\begin{equation}
 \begin{array}{cccccccc}
  \toprule
   \text{space}&k&\varepsilon_\infty&\varepsilon_2&\mu_2&S&\pi_2\\
  \toprule
   S_{2k-2}(\SL(2,\Z))&\text{even}&-1&1&\text{spherical}&\{\infty\}&\text{IIb}\\
  \cmidrule{2-7}
   &\text{odd}&1&1&\text{spherical}&\multicolumn{2}{c}{\text{no lifting}}\\
  \midrule
   S_{2k-2}^{+,\mathrm{new}}(\Gamma_0^{(1)}(2))&\text{even}&-1&-1&\St_{\GL(2)}&\{\infty,2\}&\text{VIb}\\
  \cmidrule{2-7}
   &\text{odd}&1&1&\xi\St_{\GL(2)}&\{\infty,2\}&\text{Va}^*\\
  \midrule
   S_{2k-2}^{-,\mathrm{new}}(\Gamma_0^{(1)}(2))&\text{even}&-1&1&\xi\St_{\GL(2)}&\{\infty\}&\text{Vb}\\
  \cmidrule{2-7}
   &\text{odd}&1&-1&\St_{\GL(2)}&\{\infty\}&\text{VIc}\\
  \midrule
   S_{2k-2}^{+,\mathrm{new}}(\Gamma_0^{(1)}(4))&\text{even}&-1&-1&\tau_2&\{\infty,2\}&\text{XIa}^*\\
  \cmidrule{2-7}
   &\text{odd}&1&1&\multicolumn{3}{c}{\text{no possible }\mu_2}\\
  \midrule
   S_{2k-2}^{-,\mathrm{new}}(\Gamma_0^{(1)}(4))&\text{even}&-1&1&\multicolumn{3}{c}{\text{no possible }\mu_2}\\
  \cmidrule{2-7}
   &\text{odd}&1&-1&\tau_2&\{\infty\}&\text{XIb}\\
  \bottomrule
 \end{array}
\end{equation}
\end{table}

The upshot is that each newform in one of the spaces given in Table~\ref{SKtable} gives rise to a unique ``holomorphic'' cuspidal representation of $\PGSp(4,\A)$, the only exception being that eigenforms in $S_{2k-2}(\SL(2,\Z))$ for odd $k$ cannot be lifted, because it is impossible to satisfy the parity condition. We can thus produce elements of $S_k^{\text{\bf(P)}}(\Omega)$ for those types $\Omega$ listed in the last column of Table~\ref{SKtable}. Note that representations of type XIa$^*$ do not appear in Table~\ref{parahoricrestrictiontable}, and hence those Saito-Kurokawa lifts are not relevant for our purposes.

Conversely, suppose that $\pi\cong\otimes\pi_v$ is an element of $S_k^{\text{\bf(P)}}(\Omega)$ for some $\Omega$. Then, by Lemma \ref{Psigmatrivlemma}, the Arthur packet containing $\pi$ is parametrized by a cuspidal, automorphic representation $\mu\cong\otimes\mu_v$ and the trivial character $\sigma$. Looking at the archimedean parameters in \cite[Table~2]{Schmidt2020}, we see that $\mu$ corresponds to a newform of weight $2k-2$. There can be no ramification outside $2$, so that the level of this newform is a power of $2$. In fact, the level must be $1$, $2$ or $4$, since otherwise a look at the non-archimedean packets in \cite[Table~2]{Schmidt2020} would show that the local component $\mu_2$ would be such that the elements of the local Arthur packet at $p=2$ would not appear in Table~\ref{parahoricrestrictiontable}. Hence $\pi$ is a lift of a newform of one of the spaces appearing in Table~\ref{SKtable}.

This discussion shows that
\begin{align*}
 s_k^{\text{\bf(P)}}(\text{IIb})&=\begin{cases}
                                   \dim S_{2k-2}(\SL(2,\Z))&\text{if $k$ is even},\\
                                   0&\text{if $k$ is odd},\\
                                  \end{cases}\\
 s_k^{\text{\bf(P)}}(\text{VIb})&=\begin{cases}
                                   \dim S_{2k-2}^{+,\text{new}}(\Gamma_0^{(1)}(2))&\text{if $k$ is even},\\
                                   0&\text{if $k$ is odd},\\
                                  \end{cases}\\
 s_k^{\text{\bf(P)}}(\text{Va}^*)&=\begin{cases}
                                   0&\text{if $k$ is even},\\
                                   \dim S_{2k-2}^{+,\text{new}}(\Gamma_0^{(1)}(2))&\text{if $k$ is odd},\\
                                  \end{cases}\\
 s_k^{\text{\bf(P)}}(\text{Vb})&=\begin{cases}
                                   \dim S_{2k-2}^{-,\text{new}}(\Gamma_0^{(1)}(2))&\text{if $k$ is even},\\
                                   0&\text{if $k$ is odd},\\
                                  \end{cases}\\
 s_k^{\text{\bf(P)}}(\text{VIc})&=\begin{cases}
                                   0&\text{if $k$ is even},\\
                                   \dim S_{2k-2}^{-,\text{new}}(\Gamma_0^{(1)}(2))&\text{if $k$ is odd},\\
                                  \end{cases}\\
 s_k^{\text{\bf(P)}}(\text{XIb})&=\begin{cases}
                                   0&\text{if $k$ is even},\\
                                   \dim S_{2k-2}^{-,\text{new}}(\Gamma_0^{(1)}(4))&\text{if $k$ is odd},\\
                                  \end{cases}
\end{align*}
Now the asserted formulas follow from \eqref{SkSL2Zdim}, \eqref{SkGamma02dim}, \eqref{SkGamma04dim}, the dimension formula for $S_k^{\pm,\text{new}}(\Gamma_0^{(1)}(2))$ in \cite[Thm.~2.2]{Martin2018}, and straightforward calculations.

If $\Omega\notin\{\text{IIb, Vb, VIb, VIc, XIb, Va$^*$, XIa$^*$}\}$, then $s_k^{\text{\bf(P)}}(\Omega)=0$, because type $\Omega$ does not appear in local Arthur packets of type \textbf{(P)}; see \cite[Table~2]{Schmidt2020}. Furthermore, $s_k^{\text{\bf(P)}}(\text{XIa}^*)=0$ because the hyperspecial parahoric restriction for representations of type XIa$^*$ is zero. 
\end{proof}

We note that the cases of $s_k(\Omega)$ for $\Omega\in\{\text{IIb, Vb, VIb, VIc}\}$ can be found in \cite[(3.6) and Sect.~3.2]{RoySchmidtYi2021}.
\begin{corollary}\label{dimSkPcorollary}
 The dimension formulas for Saito-Kurokawa cusp forms given in Table~\ref{dimSkPtable} hold.
\end{corollary}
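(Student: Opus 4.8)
The plan is to feed the \textbf{(P)} version of the matrix equation \eqref{mainmatrixeq1} with the explicit values of $s_k^{\text{\bf(P)}}(\Omega)$ established in Theorem~\ref{skPtheorem}. By that theorem, the only nonzero entries in the \textbf{(P)}-version of the column vector on the right-hand side are those indexed by $\Omega\in\{\text{IIb},\text{Vb},\text{VIb},\text{VIc},\text{Va}^*,\text{XIb}\}$ (recall type XIa$^*$, the other SK-type, has zero hyperspecial parahoric restriction and so does not occur as a column of \eqref{mainmatrixeq1}). So the right-hand side reduces to a short linear combination of the corresponding columns of the matrix in \eqref{mainmatrixeq1}, with the known generating series from Table~\ref{skPOmegatable} as coefficients.

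Concretely, first I would isolate the six relevant columns of the $11\times19$ matrix in \eqref{mainmatrixeq1} --- namely the columns headed $s_k(\text{IIb})$, $s_k(\text{Vb})$, $s_k(\text{VIb})$, $s_k(\text{VIc})$, $s_k(\text{Va}^*)$, $s_k(\text{XIb})$ --- read off from Table~\ref{localdimtable}. Then, for each congruence subgroup $\Gamma$ in the list \eqref{congruencesubgroupseq}, the entry $\dim S_k^{\text{\bf(P)}}(\Gamma)$ is obtained as the dot product of the appropriate row of this reduced matrix with the vector $\bigl(s_k^{\text{\bf(P)}}(\text{IIb}),\dots,s_k^{\text{\bf(P)}}(\text{XIb})\bigr)$. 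Since $s_k^{\text{\bf(P)}}(\text{IIb})$, $s_k^{\text{\bf(P)}}(\text{Vb})$, $s_k^{\text{\bf(P)}}(\text{VIb})$ are supported on even $k$ and $s_k^{\text{\bf(P)}}(\text{Va}^*)$, $s_k^{\text{\bf(P)}}(\text{VIc})$, $s_k^{\text{\bf(P)}}(\text{XIb})$ on odd $k$, the even-weight and odd-weight contributions separate cleanly. Substituting the closed-form generating series from Table~\ref{skPOmegatable} (equivalently, from \eqref{SkSL2Zdim}, \eqref{SkGamma02dim}, \eqref{SkGamma04dim} and the $\pm$-newform formula of \cite[Thm.~2.2]{Martin2018}) and collecting terms yields the generating series listed in Table~\ref{dimSkPtable}.

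There is genuinely nothing deep here: the content is entirely in Theorem~\ref{skPtheorem} and in the local dimension Table~\ref{localdimtable}, both already established. The only mild subtlety --- and the one place to be careful --- is bookkeeping: making sure the column of \eqref{mainmatrixeq1} one uses for each type $\Omega$ is the one matching the $d_{C,\Omega}$ of Table~\ref{localdimtable} (in particular that type Vb and Vc share the same fixed-vector dimensions and the same $s_k^{\text{\bf(P)}}$, so it suffices to use the Vb column), and tracking the even/odd-weight split so that no cross terms are introduced. Assembling the rational generating functions and simplifying them into the form displayed in Table~\ref{dimSkPtable} is then a routine computation, which I would simply carry out rather than belabor.

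\begin{proof}
By Proposition~\ref{YQBzeroprop} the decomposition $S_k(\Gamma)=S_k^{\text{\bf(G)}}(\Gamma)\oplus S_k^{\text{\bf(P)}}(\Gamma)$ holds, and the matrix identity \eqref{mainmatrixeq1} remains valid after placing a \textbf{(P)} on every $S_k(\Gamma)$ and every $s_k(\Omega)$. By Theorem~\ref{skPtheorem}, in this \textbf{(P)}-version of \eqref{mainmatrixeq1} the only nonzero entries of the right-hand column vector are $s_k^{\text{\bf(P)}}(\text{IIb})$, $s_k^{\text{\bf(P)}}(\text{Vb})$, $s_k^{\text{\bf(P)}}(\text{VIb})$, $s_k^{\text{\bf(P)}}(\text{VIc})$, $s_k^{\text{\bf(P)}}(\text{Va}^*)$ and $s_k^{\text{\bf(P)}}(\text{XIb})$, whose generating series are those recorded in Table~\ref{skPOmegatable} (here we use that the class of type-Vb representations coincides with that of type-Vc, so only the Vb column of \eqref{mainmatrixeq1} enters, carrying the coefficient $s_k^{\text{\bf(P)}}(\text{Vb})$). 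Reading off the corresponding six columns of the coefficient matrix in \eqref{mainmatrixeq1} from Table~\ref{localdimtable} and taking, for each $\Gamma$, the dot product of the relevant row with the vector $\bigl(s_k^{\text{\bf(P)}}(\text{IIb}),s_k^{\text{\bf(P)}}(\text{Vb}),s_k^{\text{\bf(P)}}(\text{VIb}),s_k^{\text{\bf(P)}}(\text{VIc}),s_k^{\text{\bf(P)}}(\text{Va}^*),s_k^{\text{\bf(P)}}(\text{XIb})\bigr)$ gives $\dim S_k^{\text{\bf(P)}}(\Gamma)$. Since the first three of these quantities are supported on even $k$ and the last three on odd $k$, the even- and odd-weight contributions separate. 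Substituting the closed forms from \eqref{SkSL2Zdim}, \eqref{SkGamma02dim}, \eqref{SkGamma04dim} and \cite[Thm.~2.2]{Martin2018} and simplifying yields the generating series in Table~\ref{dimSkPtable}.
\end{proof}
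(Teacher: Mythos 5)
Your proposal is correct and follows essentially the same route as the paper: the paper's proof consists precisely of writing down the \textbf{(P)} version of \eqref{mainmatrixeq1} (the displayed equation \eqref{dimskPeq}, whose $11\times6$ matrix is exactly the six columns of Table~\ref{localdimtable} for IIb, Vb, VIb, VIc, XIb, Va$^*$) and substituting the generating series from Theorem~\ref{skPtheorem}. Your bookkeeping remarks (using only the Vb column for the Vb/Vc class, the even/odd weight split) match the paper's conventions, so nothing further is needed.
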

\begin{proof}
This is immediate from Theorem~\ref{skPtheorem} and the following \textbf{(P)} version of \eqref{mainmatrixeq1}.

\begin{equation}\label{dimskPeq}
	\begin{bsmallmatrix}\dim S_k^{\text{\bf(P)}}(\Gamma(2))\\\dim S_k^{\text{\bf(P)}}(\Sp(4,\Z))\\\dim S_k^{\text{\bf(P)}}(K(2))\\\dim S_k^{\text{\bf(P)}}(K(4))\\\dim S_k^{\text{\bf(P)}}(\Gamma_0(2))\\\dim S_k^{\text{\bf(P)}}(\Gamma_0(4))\\\dim S_k^{\text{\bf(P)}}(\Gamma_0^*(4))\\\dim S_k^{\text{\bf(P)}}(\Gamma_0'(2))\\\dim S_k^{\text{\bf(P)}}(\Gamma_0'(4))\\\dim S_k^{\text{\bf(P)}}(M(4))\\\dim S_k^{\text{\bf(P)}}(B(2))\end{bsmallmatrix}
	=\resizebox{0.23\textwidth}{!}{$
		\left[
		\begin{array}{ccccccc} 	
			15&9&5&5&5&1\\
			1&0&0&0&0&0\\
			1&1&0&1&0&0\\
			2&1&0&1&1&0\\
			3&1&1&0&0&0\\
			7&3&3&0&0&0\\
			7&3&3&1&3&1\\
			2&1&0&1&0&0\\
			4&2&0&2&1&0\\
			3&2&0&2&1&0\\
			4&2&1&1&0&0
		\end{array}\right]$}
	\begin{bsmallmatrix}
		s_k^{\text{\bf(P)}}(\text{IIb})\\s_k^{\text{\bf(P)}}(\text{Vb})\\s_k^{\text{\bf(P)}}(\text{VIb})\\s_k^{\text{\bf(P)}}(\text{VIc})\\s_k^{\text{\bf(P)}}(\text{XIb})\\s_k^{\text{\bf(P)}}(\text{Va}^*)\end{bsmallmatrix}.
\end{equation}
\end{proof}
For illustration we have listed $\dim S_k^{\text{\bf(P)}}(\Gamma)$ and $s_k^{\text{\bf(P)}}(\Omega)$ for weights $k\le 20$ in Tables~\ref{SkPlowweightstable}~and ~\ref{SkOmegalowweightstable}.
\subsection{General type}\label{Gsec}
In this section we will determine the numbers $s_k^{\text{\bf(G)}}(\Omega)$. As an application we obtain dimension formulas for the congruence subgroup $\Gamma_0'(4)$.
\begin{proposition}\label{dimSkGprop}
 With the possible exception of $\Gamma=\Gamma_0'(4)$, the generating series for $\dim S_k^{\text{\bf(G)}}(\Gamma)$ given in Table~\ref{dimSkGtable} hold.
\end{proposition}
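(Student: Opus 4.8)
The plan is to exploit the Arthur-type decomposition that has already been set up. By Proposition~\ref{YQBzeroprop} and \eqref{SMFarthurdecomp2} we have $S_k(\Gamma)=S_k^{\text{\bf(G)}}(\Gamma)\oplus S_k^{\text{\bf(P)}}(\Gamma)$ for every congruence subgroup $\Gamma$ in \eqref{congruencesubgroupseq}, so that
\[
 \dim S_k^{\text{\bf(G)}}(\Gamma)=\dim S_k(\Gamma)-\dim S_k^{\text{\bf(P)}}(\Gamma).
\]
Both terms on the right are already known. By Proposition~\ref{MkSkdimprop} the generating series for $\dim S_k(\Gamma)$ is the one recorded in Table~\ref{dimSktable}, valid for every $\Gamma$ in \eqref{congruencesubgroupseq} \emph{except} $\Gamma_0'(4)$; and by Corollary~\ref{dimSkPcorollary} the generating series for $\dim S_k^{\text{\bf(P)}}(\Gamma)$ is the one recorded in Table~\ref{dimSkPtable}, valid for \emph{all} $\Gamma$ in \eqref{congruencesubgroupseq}. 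Hence, for every $\Gamma$ other than $\Gamma_0'(4)$, the series $\sum_k\dim S_k^{\text{\bf(G)}}(\Gamma)\,t^k$ is obtained by subtracting the entry of Table~\ref{dimSkPtable} from that of Table~\ref{dimSktable}, and one checks that this difference equals the claimed entry of Table~\ref{dimSkGtable}.

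The exclusion of $\Gamma=\Gamma_0'(4)$ is forced: its value $\dim S_k(\Gamma_0'(4))$ is not yet available, and in fact recovering it is exactly what Corollary~\ref{Klingen4corollary} accomplishes, after the numbers $s_k^{\text{\bf(G)}}(\Omega)$ have been pinned down from the \textbf{(G)}-version of \eqref{mainmatrixeq1}. So this is a genuine logical dependency in the overall strategy rather than a gap in the present argument.

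The only actual work is routine generating-series bookkeeping: for each of the ten remaining groups one subtracts two rational functions of $t$ (with denominators among $(1-t^2)^2$, $(1-t^2)(1-t^4)$, and $(1-t^4)(1-t^6)$), brings them to a common denominator, and simplifies to the form tabulated in Table~\ref{dimSkGtable}. As consistency checks one verifies that every coefficient of the resulting series is a non-negative integer, and that the vanishing $S_2(\Gamma)=S_4(\Gamma)=0$ used in Proposition~\ref{MkSkdimprop} is reflected in the low-weight terms (together with $\dim S_k^{\text{\bf(G)}}(\Gamma)=\dim S_k(\Gamma)-\dim S_k^{\text{\bf(P)}}(\Gamma)$ for odd $k$). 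I expect no genuine obstacle in this proposition; all the substance lies in the earlier Theorem~\ref{skPtheorem} and Corollary~\ref{dimSkPcorollary} and in the previously established formulas for $\dim S_k(\Gamma)$.
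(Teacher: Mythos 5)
Your argument is exactly the paper's proof: subtract the Saito–Kurokawa dimensions of Table~\ref{dimSkPtable} (Corollary~\ref{dimSkPcorollary}) from the full cusp-form dimensions of Table~\ref{dimSktable} (Proposition~\ref{MkSkdimprop}), using the decomposition \eqref{SMFarthurdecomp2}, with $\Gamma_0'(4)$ excluded because its $\dim S_k$ is not yet available. Correct, and essentially the same approach as the paper.
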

\begin{proof}
 Recall from Proposition~\ref{MkSkdimprop} that the formulas given in Table~\ref{dimSktable} have been proven except for $\Gamma=\Gamma_0'(4)$. The formulas in Table~\ref{dimSkPtable} have been proven for all $\Gamma$; see Corollary~\ref{dimSkPcorollary}. In view of \eqref{SMFarthurdecomp2}, all we have to do is subtract the formulas in Table~\ref{dimSkPtable} from those in Table~\ref{dimSktable}.
\end{proof}

\begin{theorem}\label{skGtheorem}
 The generating series for the numbers $s_k^{\text{\bf(G)}}(\Omega)$ given in Table~\ref{skGOmegatable} hold. If a representation type $\Omega$ is not listed in Table~\ref{skGOmegatable}, then $s_k^{\text{\bf(G)}}(\Omega)=0$ for all $k$.
\end{theorem}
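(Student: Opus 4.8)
The plan is to invert the \textbf{(G)} version of the linear system \eqref{mainmatrixeq1} after removing the row for $\Gamma_0'(4)$. First I would identify which types can contribute: by Proposition~\ref{YQBzeroprop} together with the \textbf{(G)} column of Table~\ref{parahoricrestrictiontable}, the only $\Omega$ for which $s_k^{\text{\bf(G)}}(\Omega)$ can be non-zero are I, IIa, IIIa, IVa, Va, VIa, VIb, VII, VIIIa, VIIIb, IXa, X, XIa, Va$^*$ and $\sc(16)$; for every other $\Omega$ one gets $s_k^{\text{\bf(G)}}(\Omega)=0$ immediately, which is the last sentence of the theorem. Next I would use the stability of packets of type \textbf{(G)} (one may switch within local $L$-packets and retain automorphy), noting that among the surviving types the only non-singleton $L$-packets are $\{\mathrm{Va},\mathrm{Va}^*\}$, $\{\mathrm{VIa},\mathrm{VIb}\}$ and $\{\mathrm{VIIIa},\mathrm{VIIIb}\}$; this yields the equalities \eqref{skVaastar}--\eqref{skVIIIab} and replaces these three pairs by single unknowns, leaving twelve unknowns in all.

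Then I would invoke the observation recorded just before \eqref{skGIIIaeq}: in every column of Table~\ref{localdimtable} the entry for type IIIa equals the entry for the $L$-packet VIa/b, and the entry for type VII equals the entry for VIIIa/b (because IIIa and $\{\mathrm{VIa},\mathrm{VIb}\}$, respectively VII and $\{\mathrm{VIIIa},\mathrm{VIIIb}\}$, are the constituents of one and the same parabolically induced representation for a non-trivial versus trivial unramified inducing character). Hence in the \textbf{(G)} version of \eqref{mainmatrixeq1} the column attached to IIIa coincides with that attached to VIa/b, and the column for VII coincides with that for VIIIa/b; consequently the system can only determine the sums \eqref{skGIIIaeq} and \eqref{skGVIIeq}, and treating these as new unknowns brings the count down to exactly ten: $s_k^{\text{\bf(G)}}(\mathrm{I})$, $s_k^{\text{\bf(G)}}(\mathrm{IIa})$, $s_k^{\text{\bf(G)}}(\mathrm{IIIa+VIa/b})$, $s_k^{\text{\bf(G)}}(\mathrm{IVa})$, $s_k^{\text{\bf(G)}}({\rm Va/a}^*)$, $s_k^{\text{\bf(G)}}(\mathrm{VII+VIIIa/b})$, $s_k^{\text{\bf(G)}}(\mathrm{IXa})$, $s_k^{\text{\bf(G)}}(\mathrm{X})$, $s_k^{\text{\bf(G)}}(\mathrm{XIa})$, $s_k^{\text{\bf(G)}}(\sc(16))$.

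On the left side the unknowns are $\dim S_k^{\text{\bf(G)}}(\Gamma)$ for the eleven groups appearing in Table~\ref{dimSkGtable}; all but $\Gamma=\Gamma_0'(4)$ are known by Proposition~\ref{dimSkGprop}. Deleting the $\Gamma_0'(4)$ row therefore leaves a square $10\times10$ system whose coefficient matrix is obtained from the matrix in \eqref{mainmatrixeq1} by deleting the $\Gamma_0'(4)$ row, deleting the columns for the types not occurring in \textbf{(G)} packets, and merging columns as above. The crux of the argument — and the only real obstacle — is to check that this $10\times10$ integer matrix is invertible; this is a finite determinant computation (the determinant turns out to be non-zero). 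Granting invertibility, one solves the system over $\Q(t)$, substituting for the left-hand side the generating series from Table~\ref{dimSkGtable}, and reads off the $s_k^{\text{\bf(G)}}(\Omega)$; the outcome is precisely Table~\ref{skGOmegatable}.

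As consistency checks one verifies that each resulting generating series lies in $\N[[t]]$, as it must since it counts automorphic representations, and that it exhibits the expected low-weight behaviour — in particular the vanishing for odd $k$ forced by Theorem~\ref{oddweightcodimtheorem} and the vanishing of $S_2$ and $S_4$. These checks are not logically needed but confirm that no error has crept into the (already known) input dimensions or into the bookkeeping of the column merges.
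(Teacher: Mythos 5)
Your proposal is correct and follows essentially the same route as the paper: form the \textbf{(G)} version of \eqref{mainmatrixeq1}, drop the types absent from \textbf{(G)} packets, use the $L$-packet identifications \eqref{skVaastar}--\eqref{skVIIIab} and the coincidences of fixed-vector dimensions to merge columns into the ten combined unknowns \eqref{skGIIIaeq}, \eqref{skGVIIeq}, delete the $\Gamma_0'(4)$ row (whose left-hand side is not yet known), and invert the resulting $10\times10$ matrix against the known series of Table~\ref{dimSkGtable}. The paper's proof is just a terser version of this (it displays the reduced system \eqref{dimskGeq} and asserts invertibility), with the bookkeeping you spell out already carried out in Sect.~\ref{Arthursec}.
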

\begin{proof}
The \textbf{(G)} version of \eqref{mainmatrixeq1}, with appropriate columns combined and the row for $\Gamma_0'(4)$ omitted, is
\begin{equation}\label{dimskGeq}
	\begin{bsmallmatrix}\dim S_k^{\text{\bf(G)}}(\Gamma(2))\\\dim S_k^{\text{\bf(G)}}(\Sp(4,\Z))\\\dim S_k^{\text{\bf(G)}}(K(2))\\\dim S_k^{\text{\bf(G)}}(K(4))\\\dim S_k^{\text{\bf(G)}}(\Gamma_0(2))\\\dim S_k^{\text{\bf(G)}}(\Gamma_0(4))\\\dim S_k^{\text{\bf(G)}}(\Gamma_0^*(4))\\\dim S_k^{\text{\bf(G)}}(\Gamma_0'(2))\\\dim S_k^{\text{\bf(G)}}(M(4))\\\dim S_k^{\text{\bf(G)}}(B(2))\end{bsmallmatrix}
	=\resizebox{0.46\textwidth}{!}{$
		\left[
		\begin{array}{ccccccccccc} 	
		45&30&30&16&22&15&10&15&10&9\\
		1&0&0&0&0&0&0&0&0&0\\
		2&1&0&0&0&0&0&0&0&0\\
		4&2&1&0&1&0&0&1&0&0\\
		4&1&2&0&0&0&0&0&0&0\\
		12&5&8&2&2&4&3&1&1&0\\
		15&8&10&4&6&5&4&7&4&3\\
		4&2&1&0&1&0&0&0&0&0\\
		8&5&3&1&3&0&0&2&1&0\\
		8&4&4&1&2&0&0&0&0&0
	\end{array}\right]$}
	\begin{bsmallmatrix}s_k^{\text{\bf(G)}}(\text{I})\\s_k^{\text{\bf(G)}}(\text{IIa})\\s_k^{\text{\bf(G)}}(\text{IIIa+VIa/b})\\s_k^{\text{\bf(G)}}(\text{IVa})\\s_k^{\text{\bf(G)}}(\text{Va/a}^*)\\s_k^{\text{\bf(G)}}(\text{VII+VIIIa/b})\\s_k^{\text{\bf(G)}}(\text{IXa})\\s_k^{\text{\bf(G)}}(\text{X})\\s_k^{\text{\bf(G)}}(\text{XIa})\\s_k^{\text{\bf(G)}}(\text{sc(16)})\end{bsmallmatrix}.
\end{equation}

The $10\times10$ matrix is invertible, so that we can solve for the $s_k^{\text{\bf(G)}}(\Omega)$.
\end{proof}

\begin{corollary}\label{Klingen4corollary}
 For $\Gamma=\Gamma_0'(4)$, the results given in Tables~\ref{dimMktable}, \ref{dimSktable} and \ref{dimSkGtable} hold.
\end{corollary}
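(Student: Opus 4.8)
The plan is to extract $\dim S_k^{\text{\bf(G)}}(\Gamma_0'(4))$, $\dim S_k^{\text{\bf(P)}}(\Gamma_0'(4))$, and hence $\dim S_k(\Gamma_0'(4))$, from the one row of the matrix equation \eqref{mainmatrixeq1} that has so far been deliberately left unused, namely the row indexed by $\Gamma_0'(4)$; and then to recover $\dim M_k(\Gamma_0'(4))$ by combining this with the codimension formula of Table~\ref{codimensionstable} for even $k\geq 6$, with Theorem~\ref{oddweightcodimtheorem} for odd $k$, and with Appendix~\ref{CDappendix} for the small weights $k=2,4$.

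First I would handle the cusp forms. In passing from \eqref{mainmatrixeq1} to the square system \eqref{dimskGeq} we removed precisely the $\Gamma_0'(4)$ row, so Theorem~\ref{skGtheorem} — the invertibility of that $10\times10$ matrix together with Table~\ref{skGOmegatable} — furnishes all the numbers $s_k^{\text{\bf(G)}}(\Omega)$. Reinstating the removed row, reading off its coefficients from the $\Gamma_0'(\p^2)$ column of Table~\ref{localdimtable}, and using that $s_k^{\text{\bf(G)}}(\text{IIb})=s_k^{\text{\bf(G)}}(\text{Vb})=s_k^{\text{\bf(G)}}(\text{VIc})=s_k^{\text{\bf(G)}}(\text{XIb})=0$ together with the $L$-packet identities \eqref{skVaastar}, \eqref{skVIab}, \eqref{skVIIIab} (so that exactly the combined unknowns \eqref{skGIIIaeq}, \eqref{skGVIIeq} appear), one gets
\begin{align*}
 \dim S_k^{\text{\bf(G)}}(\Gamma_0'(4)) ={}&11\,s_k^{\text{\bf(G)}}(\text{I})+7\,s_k^{\text{\bf(G)}}(\text{IIa})+5\,s_k^{\text{\bf(G)}}(\text{IIIa+VIa/b})+2\,s_k^{\text{\bf(G)}}(\text{IVa})\\
 &{}+5\,s_k^{\text{\bf(G)}}(\text{Va/a}^*)+2\,s_k^{\text{\bf(G)}}(\text{VII+VIIIa/b})+s_k^{\text{\bf(G)}}(\text{IXa})+3\,s_k^{\text{\bf(G)}}(\text{X})\\
 &{}+2\,s_k^{\text{\bf(G)}}(\text{XIa})+s_k^{\text{\bf(G)}}(\text{sc}(16)).
\end{align*}
Substituting the generating series of Table~\ref{skGOmegatable} yields the $\Gamma_0'(4)$ entry of Table~\ref{dimSkGtable}. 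The $\Gamma_0'(4)$ entry of Table~\ref{dimSkPtable} is already available from Corollary~\ref{dimSkPcorollary} (the analogous manipulation with \eqref{dimskPeq} and Table~\ref{skPOmegatable}), so \eqref{SMFarthurdecomp2} gives $\dim S_k(\Gamma_0'(4))=\dim S_k^{\text{\bf(G)}}(\Gamma_0'(4))+\dim S_k^{\text{\bf(P)}}(\Gamma_0'(4))$, which is the $\Gamma_0'(4)$ entry of Table~\ref{dimSktable}.

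It then remains to fill in $\dim M_k(\Gamma_0'(4))$. For $k=0$ this is $1$; for odd $k\geq1$ it equals $\dim S_k(\Gamma_0'(4))$ by Theorem~\ref{oddweightcodimtheorem}; and for even $k\geq6$ we add the codimension in the $\Gamma_0'(4)$ row of Table~\ref{codimensionstable}, which is legitimate because Theorem~\ref{thmcodim} applies to $\Gamma_0'(4)$ using the double-coset data of Tables~\ref{doublecosetsPtable} and~\ref{doublecosetsQtable} and the elliptic formulas \eqref{SkSL2Zdim}--\eqref{SkGamma04dim}. The only remaining weights are $k=2$ and $k=4$, where Satake's method is not available; here I would import $\dim M_2(\Gamma_0'(4))$ and $\dim M_4(\Gamma_0'(4))$ from Appendix~\ref{CDappendix}, and then (as in the Remark following Table~\ref{codimensionstable}) observe that the codimension formula in fact also holds at $k=4$, so that $k=2$ is the only genuine exception when all these values are packaged into the rational generating series of Table~\ref{dimMktable}.

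I expect no conceptual difficulty at this stage: the substantive work was already done in Theorems~\ref{skPtheorem} and~\ref{skGtheorem} and in establishing the invertibility of the $10\times10$ matrix of \eqref{dimskGeq}, and the present statement amounts to a substitution followed by generating-function bookkeeping. The only point where the general machinery genuinely breaks down is the pair of small weights $k=2,4$: the surjectivity of the global $\Phi$-map underlying Theorem~\ref{thmcodim} is only guaranteed for $k\geq6$, so $\dim M_2(\Gamma_0'(4))$ and $\dim M_4(\Gamma_0'(4))$ cannot be produced by our methods and must be supplied by the Poor--Yuen appendix.
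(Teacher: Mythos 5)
Your proposal is correct and follows essentially the same route as the paper: it uses the $\Gamma_0'(4)$ row of the \textbf{(G)} version of \eqref{mainmatrixeq1} (with the coefficients $11,7,5,2,5,2,1,3,2,1$ exactly as in \eqref{Klingen4corollaryeq1}) together with Table~\ref{skGOmegatable} to get $\dim S_k^{\text{\bf(G)}}(\Gamma_0'(4))$, adds the Saito--Kurokawa part from Corollary~\ref{dimSkPcorollary} via \eqref{SMFarthurdecomp2}, and then recovers $\dim M_k(\Gamma_0'(4))$ from the codimension formula for even $k\ge6$, Theorem~\ref{oddweightcodimtheorem} for odd $k$, and Appendix~\ref{CDappendix} for $k=2,4$. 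This matches the paper's proof of the corollary, so no further comment is needed.
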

\begin{proof}
The row for $\Gamma_0'(4)$ in the \textbf{(G)} version of \eqref{mainmatrixeq1} is
\begin{equation}\label{Klingen4corollaryeq1}
 \dim S_k^{\text{\bf(G)}}(\Gamma_0'(4))
 =\resizebox{0.35\textwidth}{!}{$
 	\left[
 	\begin{array}{ccccccccccc} 		
  11&7&5&2&5&2&1&3&2&1
  \end{array}\right]$}
 \begin{bsmallmatrix}s_k^{\text{\bf(G)}}(\text{I})\\s_k^{\text{\bf(G)}}(\text{IIa})\\s_k^{\text{\bf(G)}}(\text{IIIa+VIa/b})\\s_k^{\text{\bf(G)}}(\text{IVa})\\s_k^{\text{\bf(G)}}(\text{Va/a}^*)\\s_k^{\text{\bf(G)}}(\text{VII+VIIIa/b})\\s_k^{\text{\bf(G)}}(\text{IXa})\\s_k^{\text{\bf(G)}}(\text{X})\\s_k^{\text{\bf(G)}}(\text{XIa})\\s_k^{\text{\bf(G)}}(\text{sc(16)})\end{bsmallmatrix}.
\end{equation}
The numbers on the right hand side are all known and given in Table~\ref{skGOmegatable}, allowing us to calculate $\dim S_k^{\text{\bf(G)}}(\Gamma_0'(4))$. Since $\dim S_k^{\text{\bf(P)}}(\Gamma_0'(4))$ is already known by Corollary~\ref{dimSkPcorollary}, we obtain $\dim S_k(\Gamma_0'(4))$ by \eqref{SMFarthurdecomp2}. We then obtain $\sum_{k=6}^\infty\dim M_k(\Gamma_0'(4))t^k$ using the codimensions from Table~\ref{codimensionstable}. Evidently $\dim M_0(\Gamma_0'(4))=1$, and $M_k(\Gamma_0'(4))=0$ for $k\in\{1,3,5\}$ by Theorem~\ref{oddweightcodimtheorem}. Finally $\dim M_k(\Gamma_0'(4))$ for $k\in\{2,4\}$ are determined in Appendix~\ref{CDappendix}.
\end{proof}
For illustration we have listed $\dim S_k^{\text{\bf(G)}}(\Gamma)$ and $s_k^{\text{\bf(G)}}(\Omega)$ for weights $k\le 20$ in Tables~\ref{SkGlowweightstable}~and ~\ref{SkOmegalowweightstable}.
\begin{appendix}
\section{Modular forms of Klingen level \texorpdfstring{$4$}{} and small weight}\label{CDappendix}
by Cris Poor and David S.\ Yuen
%%%%%%%%%%%%%%%%%%%%%%%%%%%%%%%
\subsection{Introduction and notation\label{sectionIntr}}
%%%%%%%%%%%%%%%%%%%%%%%%%%%%%%%

This appendix proves $\dim M_4(\KLfour)=4$ and~$\dim M_2(\KLfour)=0$.  
The proof proceeds by getting upper and lower bounds that agree.  
The proofs of the upper bounds rely on the known dimensions 
$\dim M_8(\KLfour)=12$ and~$\dim M_8(\middlefour)=8$.  
Proving the nontrivial lower bound relies on constructing Gritsenko lifts of linearly independent Jacobi-Eisenstein series.  
\smallskip

Let $\Jac km$ denote the space of Jacobi forms of weight~$k$ and index~$m$ 
on~$\SL(2,\Z)$, see~\cite{ez85} for definitions.  
Jacobi forms of index zero are identified 
with elliptic modular forms, $\Jac k0=M_k\left( \SL(2,\Z) \right)$, and we will need the 
Eisenstein series $G_k \in  M_k\left( \SL(2,\Z) \right)$ for even~$k \ge 4$, 
$$
G_k(\tau) = \frac12 \zeta(1-k) + \sum_{n \ge 1} \sigma_{k-1}(n) q^n,
$$
for $\tau \in \UHP_1$ and $q=e(\tau)=e^{2 \pi i \tau}$.  For $z \in \C$ and $\zzeta=e(z)$, let 
$$
\Ekone(\tau,z) = \sum_{n,r \in \Z:\, n, 4n-r^2 \ge0}
\dfrac{H(k-1,4n-r^2)}{H(k-1,0)} q^n \zzeta^r \in \Jac k1
$$
be the Jacobi-Eisenstein series of even weight~$k\ge 4$ and index one from 
Eichler--Zagier \cite{ez85}, page~{22}.    
Here the Cohen numbers~$H(r,n)$ for $n \ge 0, r \ge2$  are directly computed as $H(r,0)=\zeta(1-2r)$; 
$H(r,n)=0$ for  $n \in \N$ such that $(-1)^r n \equiv 2,3 \bmod 4$; and, for
 $n \in \N$ such that $(-1)^r n \equiv 0,1 \bmod 4$, as 
   \begin{align*} 
 H(r,n)
&=   {L(1-r, \chi_{D})} \sum_{d | f} \mu(d)  \chi_{D}(d) d^{r-1} 
  \sigma_{2r-1}\left({f}/{d} \right), 
  \end{align*}
where $D$ is the fundamental discriminant of $\Q( \sqrt{ (-1)^r n } )$, 
$(-1)^r  n= D f^2$ for  $f  \in \N$, 
$\mu$ is the M\"{o}bius  function, 
and $\chi_D: \Z  \to \{-1, 0, 1\}$ is the Kronecker symbol.   The $L$-function $L(s,\chi_{D})=\sum_{n \in \N} \frac{\chi_D(n)}{n^s}$ 
is defined by analytic continuation, and its special values are given by 
twisted Bernoulli numbers $B_{k, \chi_D} $  
\begin{align*}
 &\sum_{a=1}^{ |D| } \chi_D(a) \dfrac{ te^{at} }{ e^{|D| t} -1 }
  = \sum_{k=0}^{\infty} B_{k, \chi_D} \frac{t^k}{k!}
   \qquad \text{(page 53, \cite{MR3307736})},\\
 &L(1-k, \chi_D)  = -\dfrac{ B_{k, \chi_D} }{k},  \text{ for $k \in \N$   } 
  \qquad \text{(page 152, \cite{MR3307736})}.
\end{align*}

For $\ell \in \N$, let $V_{\ell}: \Jac km \to \Jac k{m\ell}$ and 
$U_{\ell}: \Jac km \to \Jac k{m\ell^2}$ be the commuting family of index~raising operators 
from~\cite{ez85}, page~{41}.  In particular, for $\phi \in \Jac km$, 
\begin{align*}
\left( \phi \vert V_2 \right)(\tau,z) &= 
2^{k-1} \phi(2\tau,2z) + \frac12 
\left( \phi\left( \frac{\tau+1}{2},z \right)+\phi\left( \frac{\tau}{2},z \right) \right),  \\
\left( \phi \vert U_2 \right)(\tau,z) &= 
2^k\, \phi(\tau,2z).  
\end{align*}
The lower bound $\dim M_4\left( \KLfour \right) \ge 4$ will by proven by 
constructing four linearly independent Gritsenko lifts of Jacobi forms.  
Enough information has already been presented to define the Jacobi forms 
that we will need and to compute their initial Fourier expansions.  
We have normalized their constant terms to be~$1$. 
\begin{align*}
	&\varphi_0 = \Efone = 1 + 
	\left(\zzeta^2+56\zzeta+126+56\zzeta\inv+\zzeta^{-2}\right) q  \\
	\,\,&+\left(126\zzeta^2+576\zzeta+756+576\zzeta\inv+126\zzeta^{-2}\right) q^2 + \cdots \\
	&\varphi_1 = \frac1{16}\Efone \vert U_2 = 1 + 
	\left(\zzeta^4+56\zzeta^2+126+56\zzeta^{-2}+\zzeta^{-4}\right) q  \\
	&\,\,+\left(126\zzeta^4+576\zzeta^2+756+576\zzeta^{-2}+126\zzeta^{-4}\right) q^2 + \cdots \\
	&\varphi_2 = \frac1{9}\Efone \vert V_2 = 1 + 
	\left(14\zzeta^2+64\zzeta+84+64\zzeta^{-1}+14\zzeta^{-2}\right) q  \\
	&\,\,+\left(\zzeta^4+64\zzeta^3+280\zzeta^2+448\zzeta + 574+448\zzeta^{-1}+ \cdots +\zzeta^{-4}\right) q^2 + \cdots \\
	&\varphi_3 = \frac1{81}\Efone \vert V_2\vert V_2 = 1  
	+\left(\frac19\zzeta^4+\frac{64}{9}\zzeta^3+\frac{280}{9}\zzeta^2+\frac{448}{9}\zzeta+\frac{574}{9}+\frac{448}{9}\zzeta^{-1}+ \cdots+\frac19\zzeta^{-4}\right) q\,  \\
+	&\left(\frac{64}{9}\zzeta^5+\frac{686}{9}\zzeta^4+\frac{448}{3}\zzeta^3+320\zzeta^2+\frac{896}{3}\zzeta + \frac{1372}{3}+\frac{896}{3}\zzeta^{-1}+ \cdots +\frac{64}{9}\zzeta^{-5}\right) q^2 + \cdots \\
%	&\left(\tfrac{64}{9}\zzeta^5+\tfrac{686}{9}\zzeta^4+\tfrac{448}{3}\zzeta^3+320\zzeta^2+\tfrac{896}{3}%\zzeta + \tfrac{1372}{3}+\tfrac{896}{3}\zzeta^{-1}+ \cdots +\tfrac{64}{9}\zzeta^{-5}\right) q^2 + \cdots \\
\end{align*}

%%%%%%%%%%%%%%%%%%%%%%%%%%%%%%%
\subsection{Proofs\label{sectionproof}}
%%%%%%%%%%%%%%%%%%%%%%%%%%%%%%%

Let $\Jmero km$ be the $\C$-vector space of meromorphic functions on $\UHP_1\times\C$ 
spanned by~$a/b$ such that $a \in \Jac {k_1}{m_1}$, $b \in \Jac {k_2}{m_2}\setminus\{0\}$, and $k_1-k_2=k$, $m_1-m_2=m$;  
define $M^{\rm mero}_k(\Gamma)$  similarly.  
\begin{lemma}
\label{lemmaone}
A basis for $\Jac 40$ is $G_4$.  
A basis for $\Jac 41$ is $\varphi_0$. 
A basis for $\Jac 42$ is $\varphi_2$. 
A basis for $\Jac 44$ is $\varphi_1, \varphi_3$. 
We have $\varphi_2^2/G_4 \in \Jmero 44 \setminus \Jac 44$.  
\end{lemma}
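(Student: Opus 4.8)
The statement has five assertions; four are dimension-one (or dimension-two) statements about spaces of holomorphic Jacobi forms of weight~$4$, and the last is a non-vanishing/non-holomorphy statement about $\varphi_2^2/G_4$. The natural approach is to invoke the Eichler--Zagier structure theory for Jacobi forms of weight~$4$ together with the explicit dimension formula $\dim\Jac{k}{m}$ from \cite{ez85}, and then confirm that the specific forms $\varphi_0,\varphi_1,\varphi_2,\varphi_3$ defined above are honest elements of the asserted spaces and are linearly independent. First I would recall that $\Jac{k}{0}\cong M_k(\SL(2,\Z))$, so $\dim\Jac40=1$ with basis $G_4$; this is immediate. For $\Jac41$, the Eichler--Zagier dimension formula gives $\dim\Jac41=1$, and since $\varphi_0=\Efone$ is by construction a nonzero element of $\Jac41$ (it is the index-one Eisenstein series of weight~$4$ from \cite{ez85}), it is a basis. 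For $\Jac42$ one again reads off $\dim\Jac42=1$ from \cite{ez85}; the form $\varphi_2=\tfrac19\Efone|V_2$ lies in $\Jac{4}{2}$ because $V_2\colon\Jac41\to\Jac42$, and its Fourier expansion shows it is nonzero (constant term normalized to $1$), hence a basis.

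For $\Jac44$ the dimension formula gives $\dim\Jac44=2$. The two candidate forms are $\varphi_1=\tfrac1{16}\Efone|U_2$ and $\varphi_3=\tfrac1{81}\Efone|V_2|V_2$; both lie in $\Jac44$ since $U_2\colon\Jac41\to\Jac44$ and $V_2\colon\Jac42\to\Jac44$. To show they are a basis it suffices to show they are linearly independent, which I would do by comparing the first two Fourier coefficients: the $q^1$-coefficient of $\varphi_1$ is $\zeta^4+56\zeta^2+126+56\zeta^{-2}+\zeta^{-4}$, supported on even powers of $\zeta$, whereas that of $\varphi_3$ has a nonzero $\zeta^1$-term ($\tfrac{448}{9}$), so no scalar multiple of $\varphi_1$ equals $\varphi_3$; since the space is two-dimensional, $\{\varphi_1,\varphi_3\}$ is a basis. (One should double-check the normalizing constants $\tfrac1{16}=2^{-k}$ with $k=4$ and $\tfrac1{81}=3^{-k+2\cdot?}$ — actually the normalization is chosen precisely so the constant term is $1$, using $(\phi|U_2)(\tau,z)=2^k\phi(\tau,2z)$ and the iterated $V_2$ formula; this is a routine check against the displayed expansions.)

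For the final claim, $\varphi_2^2/G_4\in\Jmero44\setminus\Jac44$: the quotient lies in $\Jmero44$ because $\varphi_2^2\in\Jac82$ wait — $\varphi_2\in\Jac42$ so $\varphi_2^2\in\Jac84$, and dividing by $G_4\in M_4(\SL(2,\Z))=\Jac40$ gives a meromorphic Jacobi form of weight~$4$ and index~$4$, i.e.\ an element of $\Jmero44$. To see it is not holomorphic, I would argue that $G_4$ has a zero on the upper half-plane (at $\tau=\rho=e^{2\pi i/3}$, where $E_4$ vanishes to order~$1$) while $\varphi_2^2$ does not vanish to sufficient order there. Concretely, evaluating the theta-decomposition or simply the Fourier--Jacobi expansion: a holomorphic Jacobi form in $\Jac44$ is a combination $a\varphi_1+b\varphi_3$, and one can check by comparing enough Fourier coefficients of $\varphi_2^2$ with those of $G_4\cdot(a\varphi_1+b\varphi_3)$ that no choice of $a,b$ works — equivalently, $\varphi_2^2$ is not divisible by $G_4$ in the ring of Jacobi forms. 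The cleanest route: compute the constant ($q^0$) Fourier--Jacobi coefficient. Since $\varphi_2$ has constant term $1$ in $q$, $\varphi_2^2$ has $q^0$-coefficient $1$, while $G_4$ has $q^0$-coefficient $\tfrac12\zeta(-3)=\tfrac1{240}$; so $\varphi_2^2/G_4$ has $q^0$-coefficient $240$, a constant (independent of $z$), which is consistent with holomorphy at that order. One must go to higher order: the first genuine obstruction appears when the $\zeta$-dependence of $\varphi_2^2$ at some $q^n$ fails to be divisible, as an element of $\C[\zeta,\zeta^{-1}]$-coefficients, by the corresponding structure forced by $G_4\cdot\Jac44$.

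\textbf{Main obstacle.} The routine parts (dimension counts from \cite{ez85}, membership of the $\varphi_i$ in the right spaces, and linear independence of $\varphi_1,\varphi_3$) are straightforward Fourier-coefficient bookkeeping. The one genuinely delicate point is the last assertion $\varphi_2^2/G_4\notin\Jac44$: showing non-holomorphy requires either a clean vanishing-order argument at $\tau=\rho$ for theta-components, or an explicit finite computation verifying that $\varphi_2^2$ is not in the ideal $G_4\cdot\Jac40$-multiple of $\Jac44$ — i.e.\ that the linear system "$\varphi_2^2=G_4(a\varphi_1+b\varphi_3)$" is inconsistent. I expect the authors settle this by exhibiting one Fourier coefficient (a specific power $q^n\zeta^r$) where the putative identity fails; identifying the smallest such $(n,r)$ and carrying out that single coefficient comparison is the crux of the argument.
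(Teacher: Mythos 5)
Your plan follows the paper's proof essentially verbatim: the dimensions $\dim\Jac 4m=1,1,2$ for $m=1,2,4$ are quoted from \cite{ez85} (pp.~103--105), linear independence is read off the displayed Fourier expansions, and the final claim is settled exactly as you anticipate, by computing the formal quotient series $\varphi_2^2/(240G_4)$ through $q^2$ and observing that its coefficients (e.g.\ the absence of $\zeta^3,\zeta^4$ terms but presence of odd powers of $\zeta$ in the $q^1$ coefficient) are incompatible with any element of $\Span(\varphi_1,\varphi_3)$. The only thing you defer is carrying out that finite coefficient comparison, which is precisely the computation the paper exhibits.
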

\begin{proof}
From \cite{ez85}, pages 103-105, we have $\dim \Jac 4m = 1,1,2$ for $m=1,2,4$.  
The Fourier expansions show the linear independence.  
Assume~$\varphi_2^2/G_4 \in \Jac 44$,   
then its Fourier expansion would be given by the  quotient of the series 
for~$\varphi_2^2$ by $G_4=\frac1{240}+q+9q^2+28q^3+\cdots$.  
The formal series for $\varphi_2^2/(240G_4)$ begins: 
\begin{align*}
 1 +& 
\left(28\zzeta^2+128\zzeta-72+128\zzeta^{-1}+28\zzeta^{-2}\right) q + 
(198\zzeta^4+1920\zzeta^3+288\zzeta^2  \\ 
&-17280\zzeta + 31908-17280\zzeta^{-1}+ \cdots +198\zzeta^{-4}) q^2 + \cdots. 
\end{align*}
However, by the Fourier expansions, this is not in the span of $\varphi_1$ and~$\varphi_3$.  
\end{proof}

{\bf Remark.\/}  
Let 
$\vartheta(\tau,z)=\sum_{n \in \Z} (-1)^n\, q^{(2n+1)^2/8} \zzeta^{(2n+1)/2}$ 
define the odd Jacobi theta function; then $\vartheta^8 \in \Jac 44$, and we may check 
$\vartheta^8=\frac98 \left(\varphi_1-\varphi_3\right) $.  
\smallskip

Each Siegel modular form $ f \in M_k\left( \KLN \right)$ has a unique Fourier-Jacobi expansion
\begin{equation}
 f\smallpmat{\tau}{z}{z}{\omega} = \sum_{m=0}^{\infty} \phi_m(\tau,z) e(m\omega)
\end{equation}
for which $\phi_m \in \Jac km$; 
to see this use $\Gamma_0'(N) \cap Q= \Sp(4,\Z) \cap Q$ and Theorem~{6.1} in~\cite{ez85}.  
Setting $\xi=e(\omega)$, we write this more briefly as 
$f = \sum_{m=0}^{\infty} \phi_m \xi^m$.  
The following theorem~\cite{MR1345176} will allow us to obtain paramodular forms as 
Gritsenko lifts of the Jacobi forms~$\varphi_j$.  
In this theorem, $c(0,0;\phi)$ is the constant term of the Fourier expansion of the Jacobi form~$\phi$.  

\begin{theorem}\label{gritlift}
Let $k,N \in \N$.  For $\phi \in \Jac kN$, we have $c(0,0;\phi)=0$ unless $k \ge 4$ is even. 
An injective linear map 
$\Grit: \Jac kN \to M_k\left( K(N)  \right)$ is defined by 
$$
\Grit(\phi) = c(0,0;\phi)G_k + \sum_{m \in \N}  \phi \vert V_m\, \xi^{Nm}. 
$$ 
\end{theorem}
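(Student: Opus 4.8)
The plan is to establish the three assertions of the theorem — vanishing of the constant term outside even weights $k\ge4$, well-definedness of $\Grit$, and injectivity — following the classical construction of Maass (for $N=1$) and Gritsenko.

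For the constant-term assertion I would restrict $\phi\in\Jac kN$ to $z=0$. A Jacobi form of index $N$ has $c(n,r;\phi)\neq0$ only when $4Nn-r^2\ge0$, so among coefficients with $n=0$ only $c(0,0;\phi)$ can be nonzero; hence $\phi(\tau,0)=\sum_{n\ge0}\bigl(\sum_r c(n,r;\phi)\bigr)q^n$ is an element of $M_k(\SL(2,\Z))$ whose constant term equals $c(0,0;\phi)$. Since $M_k(\SL(2,\Z))=0$ for $k$ odd and for $k=2$, we get $c(0,0;\phi)=0$ in those weights; and $\Jac kN=0$ for $k\le0$ and $N\ge1$ (in the theta decomposition of $\phi$ the components would be holomorphic modular forms of weight $k-\tfrac12<0$), which takes care of $k=0$. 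Thus $c(0,0;\phi)=0$ unless $k\ge4$ is even.

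For well-definedness I would first check that the formal Fourier–Jacobi series $f:=c(0,0;\phi)\,G_k+\sum_{m\in\N}(\phi\vert V_m)\,\xi^m$ converges: each $\phi\vert V_m$ lies in $\Jac k{mN}$, and the standard polynomial growth estimates for the Fourier coefficients of Jacobi forms of weight $k$, applied uniformly to the coefficients of $\phi\vert V_m$ (which are built from those of $\phi$ via the $V_m$-formula), give local uniform convergence on the region where $\operatorname{Im}\omega$ is large; together with holomorphy in $(\tau,z)$ this produces a holomorphic function on $\UHPtwo$. Invariance under $K(N)$ is then verified on generators: the Jacobi group sitting in the Klingen parabolic of $K(N)$ acts on each Fourier–Jacobi coefficient through the Jacobi transformation law — valid precisely because $\phi\vert V_m\in\Jac k{mN}$ — and the translation $\omega\mapsto\omega+1$ is built into the $\xi$-expansion. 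The one substantial point is invariance under a further generator of $K(N)$ — for $N=1$ the involution interchanging $\tau$ and $\omega$, in general the paramodular element playing its role — which, read off on Fourier coefficients, says that for $T=\smallpmat n{r/2}{r/2}{N\ell}\ge0$ the coefficient of $q^n\zeta^r\xi^\ell$ in $f$, equal to $\sum_{d\mid\gcd(n,r,\ell)}d^{k-1}c\bigl(n\ell/d^2,\,r/d;\phi\bigr)$ for $\ell\ge1$ and to the matching coefficient of $c(0,0;\phi)G_k$ for $\ell=0$, is invariant under $T\mapsto{}^tU\,T\,U$ for $U$ in the relevant arithmetic group. This is a purely combinatorial property of the level-raising operators $V_m$, namely that $\sum_m V_m\,\xi^m$ intertwines the Jacobi-group action with the full paramodular action, and it reduces to the Hecke-type relations among the $V_m$ recorded in \cite{ez85}.

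Injectivity is immediate: the $\xi^1$-coefficient of $\Grit(\phi)$ is $\phi\vert V_1=\phi$, so $\Grit(\phi)=0$ forces $\phi=0$. The main obstacle is the modularity step, and within it the invariance under the generator beyond the parabolic part — convergence and parabolic invariance are routine, but matching the Fourier coefficients across the $\tau\leftrightarrow\omega$ swap (or its paramodular analogue) is exactly where the arithmetic of the $V_m$ enters, and it is the content of the theorems of Maass and Gritsenko. Since the statement is quoted from \cite{MR1345176}, in the paper one simply cites that reference.
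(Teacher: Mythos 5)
The paper contains no proof of this statement: Theorem~\ref{gritlift} is quoted from Gritsenko \cite{MR1345176}, and the appendix of Poor and Yuen simply cites that reference, exactly as you note in your final sentence. So there is no in-paper argument to compare with; what you have written is a reconstruction of the classical Maass--Gritsenko proof.

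As a reconstruction, your outline is sound. The constant-term claim is handled correctly: for $n=0$ the positivity condition $4Nn-r^2\ge 0$ forces $r=0$, so $\phi(\tau,0)\in M_k(\SL(2,\Z))$ has constant term $c(0,0;\phi)$, which disposes of odd $k$ and $k=2$, while the theta-decomposition argument (components of weight $k-\tfrac12<0$) disposes of $k\le 0$. Injectivity via $\phi\vert V_1=\phi$ is exactly right, and you correctly record the coefficient formula $\sum_{d\mid\gcd(n,r,\ell)}d^{k-1}c(n\ell/d^2,r/d;\phi)$ for the index-$\ell$ Fourier--Jacobi data. The one place where you assert rather than prove is the crucial one: the invariance of these coefficients under a generator of $K(N)$ outside the Klingen parabolic (together with the fact that $K(N)$ is generated by the parabolic part and such an element), and the convergence estimate is likewise only gestured at. That combinatorial invariance is the entire content of the Maass--Gritsenko theorem, so your text is an honest outline rather than a complete proof --- which is acceptable here, since the paper itself defers to \cite{MR1345176} for precisely this point, but you should be aware that the ``purely combinatorial property of the $V_m$'' you invoke is the theorem, not a routine verification.
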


\begin{definition}
Let  $g_j=\Grit(\varphi_j) \in M_4\left( K(N_j)  \right)$ 
for $N_0=1$, $N_1=4$, $N_2=2$, and~$N_3=4$.  
\end{definition}

\begin{lemma}
	\label{lemmatwo}
	The elements $g_0,g_1,g_2,g_3 \in M_4\left( \KLfour \right)$ are linearly independent.  
	Each element of $\Span(  g_0,g_1,g_2,g_3)$ is determined by its Fourier-Jacobi coefficients through index~$4$. 
	The elements $g_1,g_2,g_3 \in M_4\left( \middlefour \right)$ are linearly independent.  
\end{lemma}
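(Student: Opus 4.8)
The plan is first to verify that all four Gritsenko lifts $g_j$ actually lie in $M_4(\KLfour)$, and that $g_1,g_2,g_3$ lie in $M_4(\middlefour)$, and then to separate them by reading off their initial Fourier--Jacobi coefficients, which Theorem~\ref{gritlift} makes completely explicit. For the containments, observe that $g_0=\Grit(\varphi_0)\in M_4(\Sp(4,\Z))$ since $K(1)=\Sp(4,\Z)$, while $g_1,g_3\in M_4(K(4))$ and $g_2\in M_4(K(2))$. Comparing the matrix patterns in Table~\ref{congruencesubgroupstable} entry by entry (all global groups being defined with the form $J_1$) yields $\KLfour\subseteq\Sp(4,\Z)\cap K(2)\cap K(4)$ and $\middlefour\subseteq K(2)\cap K(4)$, with $\KLfour\subseteq\middlefour$ but $\middlefour\not\subseteq\Sp(4,\Z)$ (consistent with the statement asserting only $g_1,g_2,g_3\in M_4(\middlefour)$). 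Since every modular form for a group is also a modular form for each of its subgroups, it follows that $g_0,g_1,g_2,g_3\in M_4(\KLfour)$ and $g_1,g_2,g_3\in M_4(\middlefour)$.

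The second step is the Fourier--Jacobi computation. Each $\varphi_j$ lies in $\Jac4{N_j}$ with constant term $c(0,0;\varphi_j)=1$, so by Theorem~\ref{gritlift} the index-$m$ Fourier--Jacobi coefficient of $g_j=\Grit(\varphi_j)$ equals $G_4$ for $m=0$, equals $\varphi_j\vert V_{m/N_j}\in\Jac4m$ when $N_j\mid m$ and $m\ge1$, and vanishes when $N_j\nmid m$ (recalling $V_1=\mathrm{id}$). Concretely, through index $4$: $g_0$ has coefficient $\varphi_0$ at index $1$; $g_2$ has coefficient $\varphi_2$ at index $2$ and $0$ at indices $1,3$; and $g_1,g_3$ have coefficients $\varphi_1,\varphi_3$ at index $4$ and $0$ at indices $1,2,3$. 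Then if $a_0g_0+a_1g_1+a_2g_2+a_3g_3=0$, the index-$1$ coefficient gives $a_0\varphi_0=0$, hence $a_0=0$ as $\varphi_0\ne0$; the index-$2$ coefficient then gives $a_2\varphi_2=0$, hence $a_2=0$ as $\varphi_2\ne0$; and the index-$4$ coefficient then gives $a_1\varphi_1+a_3\varphi_3=0$, hence $a_1=a_3=0$ because $\varphi_1,\varphi_3$ are linearly independent by Lemma~\ref{lemmaone}. Omitting the first of these three steps proves the independence of $g_1,g_2,g_3$ in $M_4(\middlefour)$. Since the argument used only the coefficients at indices $1,2,4$, the linear map sending an element of $\Span(g_0,g_1,g_2,g_3)$ to its tuple of Fourier--Jacobi coefficients through index $4$ is injective, which is exactly the assertion that each such element is determined by those coefficients.

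I do not expect a serious obstacle: once Theorem~\ref{gritlift} is available the Fourier--Jacobi bookkeeping is forced, and the only input needed at the final step --- that $\varphi_1,\varphi_3$ span $\Jac44$ while $\varphi_0,\varphi_2$ are nonzero --- is already contained in Lemma~\ref{lemmaone}. The one place that requires care is the first step: one must match the precise matrix entries of $\KLfour$, $\middlefour$, $K(2)$ and $K(4)$ (keeping straight the $J_1$ convention for the global groups) to be certain the paramodular and full-level lifts really are invariant under $\KLfour$ and under $\middlefour$.
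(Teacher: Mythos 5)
Your proof is correct and follows essentially the same route as the paper: both deduce the containments from $\KLfour\subseteq K(N)$ for $N=1,2,4$ (resp.\ $\middlefour\subseteq K(2),K(4)$), read off the Fourier--Jacobi expansions through index $4$ from Theorem~\ref{gritlift}, and use the nonvanishing of $\varphi_0,\varphi_2$ together with the linear independence of $\varphi_1,\varphi_3$ from Lemma~\ref{lemmaone} to force all coefficients to vanish. The only cosmetic difference is that the paper also records the index-$0$ equation and first isolates $g_1-g_3$ before invoking the index-$4$ coefficient, while you proceed directly through indices $1,2,4$.
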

\begin{proof}
	Since $\KLfour \subseteq K(N)$ for $N=1,2,4$, we have 
	$g_0,g_1,g_2,g_3 \in M_4\left( \KLfour \right)$.  
	By Theorem~{\ref{gritlift}}, 
	their Fourier-Jacobi expansions through index four are 
	\begin{align}
		\label{sammy}
		g_0&=G_4+ \varphi_0 \xi + 9 \varphi_2 \xi^2 + \varphi_0 \vert V_3 \xi^3 + \varphi_0 \vert V_4 \xi^4 +\cdots \notag \\
		g_1&=G_4+ 0 \xi + 0 \xi^2 + 0 \xi^3 + \varphi_1 \xi^4 +\cdots \\
		g_2&=G_4+ 0 \xi + \varphi_2 \xi^2 + 0 \xi^3 + 9\varphi_3 \xi^4 +\cdots \notag\\
		g_3&=G_4+ 0 \xi + 0 \xi^2 + 0 \xi^3 + \varphi_3 \xi^4 +\cdots. \notag
	\end{align}
	We first show that the subspace of $\Span(  g_0,g_1,g_2,g_3)$ whose 
	Fourier-Jacobi coefficients of index~$0$, $1$, and~$2$ vanish is the one-dimensional space spanned by~$g_1-g_3$; this subspace defined by vanishing conditions is well-defined because Fourier-Jacobi expansions are unique.  
	Let $f= \sum_j c_j g_j \in \Span(  g_0,g_1,g_2,g_3)$ for $c_j \in \C$.  
	The vanishing of the Jacobi coefficient of index zero gives $c_0+c_1+c_2+c_3=0$; 
	of index one, $c_0=0$; and of index two, $9c_0+c_2=0$.  Hence we have $c_0=c_2=0$,  
	$c_3=-c_1$, and $f=\sum_j c_j g_j=c_1(g_1-g_3)=c_1 (\varphi_1-\varphi_3) \xi^4 + \cdots$.  
	Since $\varphi_1$ and~$\varphi_3$ are linearly independent by Lemma~\ref{lemmaone}, 
	if we additionally demand that the fourth Jacobi coefficient~$c_1(\varphi_1-\varphi_3)$ 
	vanishes then $c_1=0$ and $f=0$.  
	Hence $\Span(  g_0,g_1,g_2,g_3)$ is determined by the Fourier-Jacobi coefficients through index~$4$.
	
	On the other hand, $f=0$ implies that the Fourier-Jacobi expansion of $f$ vanishes though
	index~$4$, so that $c_0+c_1+c_2+c_3=0$, $c_0=0$, $9c_0+c_2=0$, and~$c_1=0$, 
	implying $c_0=c_1=c_2=c_3=0$.  Thus the $g_j$ are linearly independent. 
	Since  $\middlefour \subseteq K(N)$ for $N=2,4$, we have 
	$g_1,g_2,g_3 \in M_4\left( \middlefour \right)$. 
	We have already seen their linear independence.   
\end{proof}

{\bf Remark.\/}  
In terms of the global paramodular newform theory of~\cite{robertsschmidt06}, 
the level one Eisenstein series $g_0$ is a newform for $K(1)=\Sp(4,\Z)$, 
and $g_2$ is the oldform above~$g_0$ in~$K(2)$, 
and $g_1,g_3$ are the oldforms above~$g_0$ in~$K(4)$.

\begin{lemma}
	\label{lemmathree}
	Products from $M_4\left( \middlefour \right)$ span a $6$-dimensional space in   
	$M_8\left( \middlefour \right)$.  
\end{lemma}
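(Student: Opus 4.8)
The plan is to show that the six symmetric products of the basis of $M_4(\middlefour)$ are linearly independent. By Lemma~\ref{lemmatwo} the forms $g_1,g_2,g_3$ are linearly independent in $M_4(\middlefour)$, and since $\dim M_4(\middlefour)=3$ (Table~\ref{dimMktable}) they form a basis; hence the span of all products $ab$ with $a,b\in M_4(\middlefour)$ is the span of the six forms $g_ig_j$ with $1\le i\le j\le3$, and these lie in $M_8(\middlefour)$. So the lemma is equivalent to the linear independence of $g_1^2,g_1g_2,g_1g_3,g_2^2,g_2g_3,g_3^2$ (a $6$-dimensional span is consistent with $\dim M_8(\middlefour)=8$).

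The key structural input is the shape of the Fourier--Jacobi expansions. Writing a weight-$k$ form as $\sum_{m\ge0}\phi_m\xi^m$ with $\phi_m\in\Jac{k}{m}$, Theorem~\ref{gritlift} together with the indices of the $\varphi_j$ ($\varphi_1,\varphi_3$ of index $4$, $\varphi_2$ of index $2$) shows that $(g_1)_m$ and $(g_3)_m$ vanish unless $4\mid m$, while $(g_2)_m$ vanishes unless $2\mid m$; moreover $(g_1)_0=(g_2)_0=(g_3)_0=G_4$, $(g_1)_4=\varphi_1$, $(g_3)_4=\varphi_3$, $(g_2)_2=\varphi_2$, $(g_2)_4=9\varphi_3$, and the remaining coefficients are images of the $\varphi_j$ under the Eichler--Zagier operators $V_\ell$, the first few being as displayed in \eqref{sammy}.

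Now suppose $\sum_{1\le i\le j\le3}a_{ij}g_ig_j=0$. First I would compare Fourier--Jacobi coefficients of indices $\equiv2\bmod4$; by the support pattern only $g_1g_2$, $g_2^2$ and $g_2g_3$ contribute there, which isolates $a_{12},a_{22},a_{23}$. The index-$2$ coefficient equals $(a_{12}+2a_{22}+a_{23})\,G_4\varphi_2$, and the index-$6$ coefficient, after subtracting this relation, collapses to $\varphi_2\bigl(a_{12}\varphi_1+(18a_{22}+a_{23})\varphi_3\bigr)$. Since $\varphi_2\ne0$ and $\varphi_1,\varphi_3$ form a basis of $\Jac{4}{4}$ (Lemma~\ref{lemmaone}), these vanishings give $a_{12}=a_{22}=a_{23}=0$. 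It then remains to treat $a_{11}g_1^2+a_{13}g_1g_3+a_{33}g_3^2=0$, all of whose Fourier--Jacobi coefficients live in indices divisible by $4$: the index-$0$ coefficient gives $a_{11}+a_{13}+a_{33}=0$, and the index-$4$ coefficient $G_4\bigl((2a_{11}+a_{13})\varphi_1+(a_{13}+2a_{33})\varphi_3\bigr)$ gives $2a_{11}+a_{13}=0$ and $a_{13}+2a_{33}=0$. These three equations leave only the one-parameter family $(a_{11},a_{13},a_{33})=t(1,-2,1)$, i.e.\ $t(g_1-g_3)^2=0$. To conclude $t=0$, note $g_1-g_3=\tfrac89\Grit(\vartheta^8)$ by the identity $\vartheta^8=\tfrac98(\varphi_1-\varphi_3)$ from the Remark following Lemma~\ref{lemmaone}; this is nonzero since $\Grit$ is injective and $\vartheta^8\ne0$, and a product of nonzero holomorphic functions on the connected domain $\UHP_2$ is nonzero, so $(g_1-g_3)^2\ne0$. (Equivalently, the index-$8$ Fourier--Jacobi coefficient of $t(g_1-g_3)^2$ is $t(\varphi_1-\varphi_3)^2=\tfrac{64}{81}\,t\,\vartheta^{16}$, which is nonzero unless $t=0$.) Hence all $a_{ij}$ vanish.

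I expect the only genuinely non-formal point to be this ``degenerate'' direction $(g_1-g_3)^2$: the Fourier--Jacobi coefficients of indices $0$, $2$, $4$ do not suffice to rule it out, so one must either extract the index-$8$ coefficient and invoke $\vartheta^{16}\ne0$, or appeal to the integral-domain property of $\bigoplus_k M_k(\middlefour)$. Everything else reduces to short direct computations with \eqref{sammy} and Theorem~\ref{gritlift}.
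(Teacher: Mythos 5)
Your proof is correct, and it takes a somewhat different route from the paper's. The paper works only with Fourier--Jacobi coefficients of index $0$, $2$, $4$ of the products (the expansions in \eqref{sammy}); at index $4$ the term $\varphi_2^2$ from $g_2^2$ appears, so the paper must invoke the final assertion of Lemma~\ref{lemmaone}, that $\varphi_2^2/G_4\in\Jmero 44\setminus\Jac 44$, to get linear independence of $\varphi_1$, $\varphi_3$, $\varphi_2^2/G_4$; this yields five equations of rank four, and the leftover two-parameter family is written as $(g_1-g_3)\bigl(c_{23}(g_1-g_2)+c_{33}(g_1-g_3)\bigr)=0$ and killed using the linear independence of $g_1-g_2$, $g_1-g_3$ from Lemma~\ref{lemmatwo} (together with the implicit integral-domain property of holomorphic functions on $\UHPtwo$). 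You instead exploit the support of the Fourier--Jacobi indices modulo $4$ coming from the Gritsenko lift, which decouples the system: indices $\equiv 2\bmod 4$ (you use $2$ and $6$) isolate and kill $a_{12},a_{22},a_{23}$ without ever meeting $\varphi_2^2$, so you never need the meromorphic statement about $\varphi_2^2/G_4$; indices $0$ and $4$ then leave only the direction $(g_1-g_3)^2$, which you eliminate either by the integral-domain argument or by the index-$8$ coefficient $(\varphi_1-\varphi_3)^2=\tfrac{64}{81}\vartheta^{16}\neq0$. The trade-off is that your argument goes to higher Fourier--Jacobi index ($6$, and optionally $8$) but dispenses with the $\varphi_2^2/G_4$ input; you also make explicit the zero-divisor-free property that the paper uses tacitly in its last step. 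Both arguments otherwise rest on the same data: Lemma~\ref{lemmaone} (for $\varphi_1,\varphi_3$ a basis of $\Jac 44$), Lemma~\ref{lemmatwo}, $\dim M_4(\middlefour)=3$, and the expansions \eqref{sammy}.
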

\begin{proof}
	Since $\dim M_4\left( \middlefour \right)=3$, Lemma~\ref{lemmatwo} shows that 
	$g_1,g_2,g_3$ is a basis.  Thus we need  to show that the six 
	products $g_1^2$, $g_1g_2$, $g_1g_3$, $g_2^2$, $g_2g_3$, and~$g_3^2$ are 
	linearly independent in $M_8\left( \middlefour \right)$. 
	Multiplying the expansions~{(\ref{sammy})}, their Fourier-Jacobi expansions through index four are 
	\begin{align*}
		g_1^2&=G_4^2 + 2G_4\varphi_1 \xi^4 +\cdots \\
		g_1g_2&=G_4^2+  G_4 \varphi_2 \xi^2  + (9G_4\varphi_3 +G_4 \varphi_1)\xi^4 +\cdots \\
		g_1g_3&=G_4^2  + (G_4\varphi_3 +G_4 \varphi_1)\xi^4 +\cdots \\
		g_2^2&=G_4^2 + 2G_4\varphi_2 \xi^2 + (18G_4\varphi_3 +\varphi_2^2) \xi^4 +\cdots \\
		g_2g_3&=G_4^2+  G_4 \varphi_2 \xi^2  + 10G_4\varphi_3 \xi^4 +\cdots \\
		g_3^2&=G_4^2 + 2G_4\varphi_3 \xi^4 +\cdots .
	\end{align*}
	To prove linear independence, 
	let  $\sum_{1 \le i \le j \le 3} c_{ij} g_ig_j=0$ for some $c_{ij} \in \C$.  
Using the linear independence of $\varphi_1$, $\varphi_3$, and~$\varphi_2^2/G_4$ from Lemma~\ref{lemmaone},  
the vanishing of the Fourier-Jacobi coefficients of indices~$0$, $2$, and~$4$, implies 
	$c_{11}=c_{23}+c_{33}$, $c_{12}=-c_{23}$, $c_{13}=-c_{23}-2c_{33}$, and $c_{22}=0$, and   
	%  Using these relations we have
	$$
	\sum_{1 \le i \le j \le 3} c_{ij}^{\,} g_ig_j=
	(g_1-g_3)\left( c_{23}^{\,}(g_1-g_2)+c_{33}^{\,}(g_1-g_3)  \right).
	$$
	By Lemma~\ref{lemmatwo}, 
	$g_1-g_2$ and $g_1-g_3$ are linearly independent, so 
	we obtain $c_{23}=c_{33}=0$.  
\end{proof}

The following proof is the most interesting. 
It leverages dimensions of spaces of higher weight to deduce 
the dimension of a space of lower weight.

\begin{proposition}
	\label{mainprop}
	We have $\dim M_4\left( \KLfour \right)=4$.  
\end{proposition}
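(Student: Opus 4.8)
The lower bound $\dim M_4(\KLfour)\ge4$ is already available: by Lemma~\ref{lemmatwo} the Gritsenko lifts $g_0,g_1,g_2,g_3$ are linearly independent in $M_4(\KLfour)$. The work lies entirely in the reverse inequality $\dim M_4(\KLfour)\le4$, and the plan is to compare $\KLfour$ with the middle group $\middlefour$ through an Atkin--Lehner-type involution and then transfer the count to weight~$8$.

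The first step is to produce the involution. Let $u\in\Sp(4,\Q)$ be the translation $Z\mapsto Z+S$ of $\UHP_2$, where $S$ is the symmetric matrix whose only nonzero entry is $\tfrac12$ in the lower right corner. A direct check against the matrix descriptions in Table~\ref{congruencesubgroupstable} shows $u\in\middlefour\setminus\KLfour$, $u^2\in\KLfour$, and $[\middlefour:\KLfour]=2$; hence $\KLfour\trianglelefteq\middlefour$, the map $f\mapsto f|_ku$ is an involution of $M_k(\KLfour)$, and its $(+1)$-eigenspace is exactly $M_k(\middlefour)$. Writing $M_k^-(\KLfour)$ for the $(-1)$-eigenspace, $M_k(\KLfour)=M_k(\middlefour)\oplus M_k^-(\KLfour)$. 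On a Fourier--Jacobi expansion $f=\sum_{m\ge0}\phi_m\xi^m$ the involution acts by $\phi_m\mapsto(-1)^m\phi_m$, since $e(1/2)=-1$; thus $M_k^-(\KLfour)$ consists of the forms whose Fourier--Jacobi coefficients are supported on odd indices. Because $\dim M_4(\middlefour)=3$ (known from the main text; cf.\ Lemma~\ref{lemmathree}), proving $\dim M_4(\KLfour)\le4$ amounts to proving $\dim M_4^-(\KLfour)\le1$, and the latter is automatically $\ge1$ because the odd part $\varphi_0\xi+\cdots$ of $g_0$ is nonzero; so equality $\dim M_4^-(\KLfour)=1$ will result.

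To bound $\dim M_4^-(\KLfour)$ I would multiply into weight~$8$, using the sign rule $M_4^-(\KLfour)\cdot M_4^-(\KLfour)\subseteq M_8(\middlefour)$ and $M_4(\middlefour)\cdot M_4^-(\KLfour)\subseteq M_8^-(\KLfour)$. The known dimensions $\dim M_8(\KLfour)=12$ and $\dim M_8(\middlefour)=8$ give $\dim M_8^-(\KLfour)=4$. Suppose for contradiction that $f_1,f_2\in M_4^-(\KLfour)$ are linearly independent. Then the six products $g_if_j$ with $i\in\{1,2,3\}$ and $j\in\{1,2\}$ lie in the $4$-dimensional space $M_8^-(\KLfour)$, so there are at least two linearly independent relations among them; since $g_1,g_2,g_3$ is a basis of $M_4(\middlefour)$, each relation has the form $Pf_1+Qf_2=0$ with $P,Q\in M_4(\middlefour)$, and (as $g_1,g_2,g_3$ are linearly independent and $f_1,f_2$ are nonzero) any nontrivial such relation has $P\ne0$ and $Q\ne0$. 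Take two independent ones, $P_1f_1+Q_1f_2=0$ and $P_2f_1+Q_2f_2=0$, with $(P_1,Q_1)$ and $(P_2,Q_2)$ linearly independent. Cross-multiplying gives $(P_1Q_2-P_2Q_1)f_1=0$ in the integral domain of Siegel modular forms, hence $P_1Q_2=P_2Q_1$ in $M_8(\middlefour)$. By Lemma~\ref{lemmathree} the six monomials $g_ig_j$ are linearly independent, so this is a genuine polynomial identity among the linear forms $P_1,Q_1,P_2,Q_2$ in the indeterminates $g_1,g_2,g_3$; unique factorization in $\C[g_1,g_2,g_3]$ then forces either $Q_1$ to be a scalar multiple of $P_1$ (whence $P_1(f_1+\lambda f_2)=0$ and so $f_1+\lambda f_2=0$, a contradiction) or $P_2$ to be a scalar multiple of $P_1$ (whence $Q_2$ is the same multiple of $Q_1$, contradicting the linear independence of $(P_1,Q_1)$ and $(P_2,Q_2)$). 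Therefore $\dim M_4^-(\KLfour)=1$ and $\dim M_4(\KLfour)=3+1=4$.

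The step I expect to be the real obstacle is the group theory in the second paragraph: checking that the single half-translation $u$ generates $\middlefour$ over $\KLfour$ (so that the index is $2$, $u$ normalizes $\KLfour$, and the $(+1)$-eigenspace is precisely $M_k(\middlefour)$), together with the bookkeeping identity that $f|_ku$ multiplies the $m$-th Fourier--Jacobi coefficient by $(-1)^m$. Once these are in place the remainder is a short dimension count driven by the already-established weight-$8$ dimensions and by Lemmas~\ref{lemmaone}--\ref{lemmathree}.
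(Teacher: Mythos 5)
Your proposal is correct, and it reorganizes the argument along a genuinely different route from the paper's. The paper works directly in $M_4(\KLfour)$: assuming a fifth independent form $f$, it places the fourteen forms $fg_1,fg_2,fg_3,g_0g_1,g_0g_2,g_0g_3,h_1,\dots,h_8$ in the twelve-dimensional $M_8(\KLfour)$, extracts two relations of the shape $fg_4+g_0g_5+h_0=0$ with an inhomogeneous term $h_0\in M_8(\middlefour)$, and must then dispose of these $h_0$ by solving meromorphically and invoking an ``integral closure'' argument before Lemma~\ref{lemmathree} and unique factorization finish the job. You instead exploit the structural fact that $\KLfour$ has index two in $\middlefour$, with the half-translation $u$ as coset representative; this is the step you flagged, and it does check out: the symplectic condition applied to the second and fourth columns of an element of $\middlefour$ forces its $(2,2)$- and $(4,4)$-entries to be odd, so $gu^{-1}$ is integral (and satisfies the $\KLfour$ congruence pattern) whenever the $(2,4)$-entry of $g$ is a strict half-integer, whence $\middlefour=\KLfour\sqcup\KLfour u$, $u$ normalizes $\KLfour$, and the fixed space of $f\mapsto f|_ku$ is exactly $M_k(\middlefour)$, with the $(-1)$-space detected by odd Fourier--Jacobi support. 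This reduces the proposition to $\dim M_4^-(\KLfour)\le 1$, and because the $M_8(\middlefour)$-component has been projected away, your weight-$8$ relations are homogeneous, $Pf_1+Qf_2=0$ with $P,Q\in\Span(g_1,g_2,g_3)$, so cross-multiplication in the integral domain of holomorphic functions on $\UHPtwo$ gives $P_1Q_2=P_2Q_1$ at once, and Lemma~\ref{lemmathree} together with unique factorization for linear forms yields the contradiction exactly as in the paper's final step---no meromorphic forms and no auxiliary basis $h_1,\dots,h_8$ are needed. Both arguments consume the same external inputs ($\dim M_8(\KLfour)=12$, $\dim M_8(\middlefour)=8$, $\dim M_4(\middlefour)=3$, Lemmas~\ref{lemmaone}--\ref{lemmathree}); what your involution buys is cleaner bookkeeping and the elimination of the integral closure step, at the cost of the routine group-theoretic verification above.
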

\begin{proof}
	We know $g_0,g_1,g_2,g_3 \in M_4\left( \KLfour \right)$ are linearly independent by Lemma~\ref{lemmatwo}.  
	Suppose by way of contradiction that an $f \in M_4\left( \KLfour \right)$ exists with 
	$f,g_0,g_1,g_2,g_3$ linearly independent.  
	Using the known dimension $\dim M_8\left( \middlefour \right)=8$, 
	let $h_1, \ldots, h_8$ be a basis of $ M_8\left( \middlefour \right)$.  
	The~$14$ elements $fg_1,fg_2,fg_3, g_0g_1,g_0g_2,g_0g_3, h_1, \ldots, h_8$,  
	are in $M_8\left( \KLfour \right)$, which is known to be $12$-dimensional, 
	so that there must be at least two linearly independent relations
	\begin{equation}
		\label{ranktwo}
		fg_4+g_0g_5+h_0=0; \qquad fg_4'+g_0g_5'+h_0'=0,
	\end{equation}
	for some $g_4,g_5,g_4',g_5' \in \Span(g_1,g_2,g_3)=M_4\left( \middlefour \right)$ 
	and some $h_0,h_0' \in M_8\left( \middlefour \right)$.  
	\smallskip
	
	We will show that $g_4$ is not identically zero.  If $g_4$ and $g_5$ were both trivial then 
	$h_0$ would also be trivial, contradicting that the relations~{(\ref{ranktwo})} have rank two. 
	If $g_4\equiv 0$ and $g_5 \not\equiv 0$, then 
	$$
	g_0= -\frac{h_0}{g_5} \in M_4^{\rm mero}\left( \middlefour  \right) \cap M_4\left( \KLfour \right)
	=M_4\left( \middlefour \right) = \Span(g_1,g_2,g_3), 
	$$
	contradicting the linear independence of $g_0,g_1,g_2,g_3$.  
	We will refer to this argument, that a meromorphic form for $\middlefour$ that is also a holomorphic form 
	for $\KLfour$ must be a holomorphic form for $\middlefour$, as the integral closure argument.  
The principle is general: 
For congruence subgroups $\Gamma_1 \subseteq \Gamma_2$ of~$\Sp(2n,\Q)$, 
we have $M_k^{\rm mero}(\Gamma_2) \cap M_k(\Gamma_1) = M_k(\Gamma_2)$.  
To prove this take $ f \in M_k^{\rm mero}(\Gamma_2) \cap M_k(\Gamma_1)$ and $\gamma \in \Gamma_2$.  
We have $f\vert\gamma =f$ on some dense open subset of~$\UHP_n$.  
Since $f \in M_k(\Gamma_1)$ is holomorphic we have $f\vert\gamma =f$ on~$\UHP_n$ and $f \in M_k(\Gamma_2)$.  
	Similarly to~$g_4$, we have $g_4'$ not identically zero.  Since $f,g_1,g_2,g_3$ are linearly independent, 
	the integral closure argument also shows that $g_5,g_5'$ are not identically zero.  
	\smallskip
	
	We will show that $g_4g_5'-g_4'g_5$ is identically zero. If not then 
	$$
	\begin{pmatrix} f \\ g_0 \end{pmatrix} 
	= \dfrac{1}{g_4g_5'-g_4'g_5}  
	\begin{pmatrix} g_5'  &  -g_5 \\ -g_4'  &  g_4 \end{pmatrix} 
	\begin{pmatrix} -h_0 \\ -h_0' \end{pmatrix} 
	\in M_4\left( \middlefour \right) \times M_4\left( \middlefour \right)
	$$
	by the integral closure argument.  However, the linear dependence of $f$ and~$g_0$ on 
	$g_1,g_2,g_3$  contradicts the assumption that  
	$f,g_0,g_1,g_2,g_3$ are linearly independent.
	\smallskip
	
	We use Lemma~{\ref{lemmathree}}.  
	Since $g_4g_5'=g_4'g_5$ for nontrivial 
	$g_4,g_5,g_4',g_5' \in M_4\left( \middlefour \right)$ and the six products
	$g_ig_j$, for $1 \le i \le j \le 3$, are linearly independent, it follows that 
	there exists a unit $\alpha \in \C^{*}$ such that 
	$g_4=\alpha g_4'$ and $g_5 = \alpha g_5'$, {\em or\/}  
	$g_4=\alpha g_5$ and $g_4' = \alpha g_5'$.  
	In the first case the two linear relations~{(\ref{ranktwo})} become
	\begin{equation*}
		%\label{}
		\alpha fg_4'+ \alpha g_0g_5'+h_0=0; \qquad  fg_4'+g_0g_5'+h_0'=0,
	\end{equation*}
	so that $h_0=\alpha h_0'$ and the relations are not linearly independent.  
	In the second case we obtain
	\begin{equation*}
		%\label{}
		\alpha fg_5+  g_0g_5+h_0=0; \qquad \alpha fg_5'+g_0g_5'+h_0'=0,
	\end{equation*}
	and $\alpha f = -g_0 - h_0/g_5 \in M_4\left( \middlefour \right)$ 
	by the integral closure argument, contradicting the linear independence of 
	$f,g_1,g_2,g_3$.  
	Thus no~$f\in  M_4\left( \KLfour \right)$ with  $f,g_0,g_1,g_2,g_3$ linearly independent can exist.  
\end{proof}

\begin{proposition}
	\label{minorprop}
	We have $\dim M_2\left( \KLfour \right)=0$.  
\end{proposition}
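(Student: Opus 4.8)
The plan is to prove that every $f\in M_2(\KLfour)$ vanishes; since $M_2(\KLfour)$ is a vector space this yields $\dim M_2(\KLfour)=0$. The argument uses the Fourier--Jacobi expansion of $f$ together with the description $M_4(\KLfour)=\Span(g_0,g_1,g_2,g_3)$ from Proposition~\ref{mainprop} and the rigidity established in the proof of Lemma~\ref{lemmatwo}.

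First I would expand $f=\sum_{m\ge0}\phi_m\xi^m$ and check that each $\phi_m$ is a \emph{level-one} holomorphic Jacobi form, $\phi_m\in\Jac2m$ in the sense of \cite{ez85}. This comes down to the observation that the forced-zero positions of the Klingen parabolic $Q$ in \eqref{BPQeq} are precisely the positions carrying the condition ``$\in N\Z$'' in the defining matrix of $\Gamma_0'(N)$; hence $Q(\Z)\subseteq\KLfour$, so $\KLfour$ contains the entire integral Jacobi group sitting inside $Q$ (equivalently, $\Gamma_{y_1}=\SL(2,\Z)$ in the $y_1$ column of Table~\ref{doublecosetsQtable}). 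In particular $\phi_0=\Phi f\in\Jac20=M_2(\SL(2,\Z))=0$.

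Next I would invoke the standard dimension data $\dim\Jac21=\dim\Jac22=0$ (the tables in \cite{ez85}, pages 103--105, already used in Lemma~\ref{lemmaone}), which gives $\phi_1=\phi_2=0$. Thus, if $f\ne0$ and $m_0$ is the smallest index with $\phi_{m_0}\ne0$, then $m_0\ge3$, so $f^2\in M_4(\KLfour)$ has Fourier--Jacobi expansion $f^2=\phi_{m_0}^2\,\xi^{2m_0}+\cdots$ with $2m_0\ge6$. On the other hand, the proof of Lemma~\ref{lemmatwo} shows (via the expansions \eqref{sammy}) that any element of $\Span(g_0,g_1,g_2,g_3)$ whose Fourier--Jacobi coefficients of index $\le4$ all vanish is identically zero. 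Since $2m_0>4$, this forces $f^2=0$, hence $f=0$, a contradiction; therefore $\dim M_2(\KLfour)=0$.

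The only point requiring care is the second paragraph, namely that the Fourier--Jacobi coefficients of a form in $M_2(\KLfour)$ are Jacobi forms for the full group $\SL(2,\Z)$ rather than for some congruence subgroup; this is a short matrix computation comparing the definitions of $Q$ and $\Gamma_0'(N)$ and is reflected in Table~\ref{doublecosetsQtable}. Everything else is immediate: the vanishing $\dim\Jac2m=0$ for $m\le2$ is classical, and the rigidity of $M_4(\KLfour)$ through index $4$ is exactly the content of Lemma~\ref{lemmatwo}.
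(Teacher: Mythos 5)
Your proposal is correct and follows essentially the same route as the paper: vanishing of $\Jac2m$ for $m\le2$ forces the Fourier--Jacobi expansion of $f$ to start at index at least $3$, so $f^2\in M_4(\KLfour)=\Span(g_0,g_1,g_2,g_3)$ vanishes through index $4$, and the rigidity from Lemma~\ref{lemmatwo} gives $f^2=0$, hence $f=0$. Your extra verification that the Fourier--Jacobi coefficients are Jacobi forms for the full Jacobi group (since the congruence conditions of $\Gamma_0'(N)$ sit exactly at the forced-zero positions of $Q$) is a detail the paper states without proof just before Theorem~\ref{gritlift}, so it is a welcome but not substantively different addition.
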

\begin{proof}
	Take $f \in M_2\left( \KLfour \right)$ with Fourier-Jacobi expansion  $f = \sum _{m=0}^{\infty}  \phi_m \xi^m$
	for $\phi_m \in \Jac 2m$.  We have $\dim \Jac 2m \le 0$ for $m \le 2$ 
	by the corollary on page~{103} of~\cite{ez85}.  
	Therefore $f = \sum _{m=3}^{\infty}  \phi_m \xi^m$ has order at least index~$3$ 
	and $f^2 \in M_4\left( \KLfour \right)=\Span(g_0,g_1,g_2,g_3)$ has order at least index~$6$.  
	By Lemma~\ref{lemmatwo} this span is determined by the Fourier-Jacobi coefficients of 
	index through~$4$; thus $f^2=0$ and $f=0$.  
\end{proof}
\pagebreak
\section{Tables}\label{TablesAppendix}
\subsection{History of dimension formulas}

\vspace{-3ex}
\begin{table}[hbt!]
		\caption{History of dimension formulas for $M_k(\Gamma)$ and $S_k(\Gamma)$ for some $\Gamma$. Earlier references appear left of later (relevant) references in the reference column.
		}
		\label{historytable}
		$$
		\renewcommand{\arraystretch}{1}
		\renewcommand{\arraycolsep}{.7cm}
		\begin{array}{ccl}
			\toprule
			\Gamma&\text{weight}&\text{Reference}\\
			\midrule
			\Sp(4, \Z)&k\geq 0 &\text{\cite[Thm.~2]{Igusa1964}},\ \text{\cite[Thm.~6-2]{Hashimoto1983}}\\
			\midrule
			\Gamma(2) &k\geq 0&\text{\cite[Thm.~2]{Igusa1964}},\ \text{\cite[p.~882]{Tsushima1982}}\\
			\midrule
			K(2)&k=1&\text{\cite[Thm.~6.1]{Ibukiyama2007}}\\
			\cmidrule{2-3}
			&k=2&\text{\cite[Sect.~1]{Ibukiyama1984}}\\
			\cmidrule{2-3}
			&k=3&\text{\cite[Thm.~2.1]{Ibukiyama2007}}\\
			\cmidrule{2-3}
			&k=4&\text{\cite[Sect.~2.4]{Ibukiyama2007}}\\
			\cmidrule{2-3}
			&k\geq5&\text{\cite[Thm.~4]{Ibukiyama1985}}\\
			\midrule
			\Gamma_0(2)&k=1&\text{\cite[Thm.~6.1]{Ibukiyama2007}}\\
			\cmidrule{2-3}
			&k=2&\text{\cite[Sect.~1]{Ibukiyama1984}}\\
			\cmidrule{2-3}
			&k=3&\text{\cite[Thm.~2.2]{Ibukiyama2007}}\\
			\cmidrule{2-3}
			&k=4&\text{\cite[Cor.~4.12]{Tsushima1997}},\ \text{\cite[Sect.~2.4]{Ibukiyama2007}}\\
			\cmidrule{2-3}
			&k\geq5&\text{\cite{Ibukiyamathesis},\ \cite[Cor.~4.12]{Tsushima1997}},\ \text{\cite[Thm.~7.4]{Wakatsuki2012}}\\
			\midrule
			\Gamma_0'(2)&k=1&\text{\cite[Thm.~6.1]{Ibukiyama2007}}\\
			\cmidrule{2-3}
			&k=2&\text{\cite[Sect.~1]{Ibukiyama1984}}\\
			\cmidrule{2-3}
			&k=3&\text{\cite[Thm.~2.4]{Ibukiyama2007}}\\
			\cmidrule{2-3}
			&k=4&\text{\cite[Sect.~2.4]{Ibukiyama2007}}\\
			\cmidrule{2-3}
			&k\geq5&\text{\cite{Ibukiyamathesis},\ \cite[Thm.~A.1]{Wakatsuki2013}}\\
			\midrule
			\B(2)&k=1&\text{\cite[Thm.~6.1]{Ibukiyama2007}}\\
			\cmidrule{2-3}
			&k=2&\text{\cite[Sect.~1]{Ibukiyama1984}}\\
			\cmidrule{2-3}
			&k=3&\text{\cite[Thm.~2.3]{Ibukiyama2007}}\\
			\cmidrule{2-3}
			&k=4&\text{\cite[Sect.~2.4]{Ibukiyama2007}}\\
			\cmidrule{2-3}
			&k\geq5&\text{\cite{Ibukiyamathesis},\ \cite[Thm.~A.2]{Wakatsuki2013}}\\   \midrule
			K(4)&k\geq 0&\text{\cite[Thm.~1.1]{PoorYuen2013}}\\
			\midrule
			\Gamma_0(4)&k\geq 0&\text{\cite[Prop.~5.4]{Tsushima2003}}\\
			\cmidrule{2-3}
			&k\geq 5&\text{\cite[Thm.~3.5]{Shukla2017}}\\\midrule
			\Gamma_0^*(4),\:\Gamma_0'(4),\,M(4)&k\geq 0&\text{Tables~\ref{dimMktable} and \ref{dimSktable}}\\
			\bottomrule
		\end{array}
		$$
\end{table}
\clearpage
\subsection{Dimension formulas for all weights}
\begin{table}[hbt!]
 \caption{Dimension formulas for $M_k(\Gamma)$. The second column indicates those cases that follow directly from \cite[Thm.~2]{Igusa1964}. The last column gives references for some other places where these formulas appear in the literature.}
 \label{dimMktable}
\begin{equation*}\setlength{\arraycolsep}{0.7ex}\renewcommand{\arraystretch}{1.9}
 \begin{array}{ccccccccccccccccccccc}
  \toprule
   \Gamma&\!\!\text{Igusa}\!\!\!\!&\sum_{k=0}^\infty\dim M_k(\Gamma)t^k&\text{reference}\\
  \toprule
   \Gamma(2)&\bullet&\frac{(1+t^2)(1+t^4)(1+t^5)}{(1-t^2)^4}&\text{\cite[p.~883]{Tsushima1982}}\\
  \midrule
   \!\!\!\Sp(4,\Z)\!\!\!&\bullet&\frac{1+t^{35}}{(1-t^4)(1-t^6)(1-t^{10})(1-t^{12})}&\text{\cite[p.~402]{Igusa1964}}\\
  \midrule
   K(2)&&\frac{(1+t^{10})(1+t^{12})(1+t^{11})}{(1-t^4)(1-t^6)(1-t^8)(1-t^{12})}&\text{\cite[Prop.~2]{IbukiyamaOnodera1997}}\\
  \midrule
   K(4)&\bullet&\frac{(1+t^{12})(1+t^6+t^7+t^8+t^9+t^{10}+t^{11}+t^{17})}{(1-t^4)^2(1-t^6)(1-t^{12})}&\text{\cite[p.~121]{IbukiyamaPoorYuen2013}}\\
  \midrule
   \Gamma_0(2)&\bullet&\frac{1+t^{19}}{(1-t^2)(1-t^4)^2(1-t^6)}&\!\!\!\!\text{\cite[Thm.~A,C]{Ibukiyama1991}}\\
  \midrule
   \Gamma_0(4)&\bullet&\frac{1+t^4+t^{11}+t^{15}}{(1-t^2)^3(1-t^6)}&\!\!\!\!\text{\cite[Prop.~5.4]{Tsushima2003}}\\
  \midrule
   \Gamma_0^*(4)&\bullet&\frac{(1+t^4+t^6+t^{10})(1+t^5)}{(1-t^2)^3(1-t^6)}\\
  \midrule
   \Gamma_0'(2)&\bullet&\frac{(1+t^6+t^8+t^{10}+t^{12}+t^{18})(1+t^{11})}{(1-t^4)^2(1-t^6)(1-t^{12})}&\text{\cite[Prop.~2]{IbukiyamaOnodera1997}}\\
  \midrule
   \Gamma_0'(4)&&\frac{1 + 2 t^4 + 4 t^6 + t^7 + 5 t^8 + 2 t^9 + 4 t^{10} + 5 t^{11} + 5 t^{12} + 4 t^{13} + 2 t^{14} + 5 t^{15} + t^{16} + 4 t^{17} + 2 t^{19} + t^{23}}{(1-t^4)^2(1-t^6)^2}\\
  \midrule
%   M(4)&\bullet&\!\!\frac{(1+t^4)(1-t+t^4-t^5+2t^6-t^7+3t^8-3t^9+2t^{10}+2t^{12}-3t^{13}+3t^{14}-t^{15}+2t^{16}-t^{17}+t^{18}-t^{21}+t^{22})}{(1-t)(1-t^4)(1-t^6)(1-t^{12})}\!\!\\
%  \midrule
%   M(4)&\bullet&\frac{(1+t^4)(1-t^2+t^4+t^6+t^7+2t^8-t^{10}+2t^{11}+2t^{12}-t^{13}+2t^{15}+t^{16}+t^{17}+t^{19}-t^{21}+t^{23})}{(1-t^2)(1-t^4)(1-t^6)(1-t^{12})}\!\!\!\!\\
%  \midrule
   M(4)&\bullet&\frac{(1+t^4)(1+2t^6+t^7+3t^8+t^9+t^{10}+2t^{11}+t^{12}+t^{13}+2t^{14}+t^{15}+t^{16}+3t^{17}+t^{18}+2t^{19}+t^{25})}{(1-t^4)^2(1-t^6)(1-t^{12})}\\ \midrule
   B(2)&\bullet&\frac{(1+t^6)(1+t^{11})}{(1-t^2)(1-t^4)^3}&\!\!\!\!\text{\cite[Thm.~B,C]{Ibukiyama1991}}\\
  \bottomrule
 \end{array}
\end{equation*}
\end{table}

We remark that all the numerator polynomials in Table~\ref{dimMktable} are palindromic. By Theorem~4.4 of \cite{Stanley1978}, this is related to the graded algebra $\bigoplus_{k\geq0} M_k(\Gamma)$ being a Gorenstein ring. (For the question of being Cohen-Macaulay, see \cite{Eichler1971}, \cite{Tsuyumine1988a}, \cite{Tsuyumine1988b}.)

\begin{table}[hbt!]
 \caption{Dimension formulas for $S_k(\Gamma)$. The second column indicates those cases that follow directly from \cite[Thm.~2]{Igusa1964}, together with the codimension formulas given in Table~\ref{codimensionstable}. The last column gives references for some other places where these formulas appear in the literature.}
 \label{dimSktable}
\begin{equation*}\setlength{\arraycolsep}{0.7ex}\renewcommand{\arraystretch}{1.9}
 \begin{array}{ccccccccccccccccccccc}
  \toprule
   \Gamma&\!\!\text{Igusa}\!\!\!\!&\sum_{k=0}^\infty\dim S_k(\Gamma)t^k&\text{reference}\\
  \toprule
   \Gamma(2)&\bullet&\frac{t^5(1+5t+t^2+4t^3+t^4-5t^5+t^6)}{(1-t^2)^4}&\text{\cite[p.~882]{Tsushima1982}}\\
  \midrule
   \!\!\!\Sp(4,\Z)\!\!\!&\bullet&\frac{1+t^{35}}{(1-t^4)(1-t^6)(1-t^{10})(1-t^{12})}-\frac1{(1-t^4)(1-t^6)}&\text{\cite[Thm.~3]{Igusa1964}}\\
  \midrule
   K(2)&&\frac{t^8(1+t^{12})(1+t^2+t^3+t^4-t^{12}+t^{13})}{(1-t^4)(1-t^6)(1-t^8)(1-t^{12})}&\text{\cite[Thm.~4]{Ibukiyama1985}}\\
  \midrule
   K(4)&\bullet&\frac{t^7(1+t+t^2+2t^3+t^4+2t^5+t^9+t^{10}+2t^{11}+t^{12}+t^{13}+t^{14}+t^{16}-t^{21}+t^{22})}{(1-t^4)^2(1-t^6)(1-t^{12})}&\text{\cite[Thm.~2]{PoorYuen2013}}\\
  \midrule
   \Gamma_0(2)&\bullet&\frac{t^6(1+t^2-t^8+t^{13})}{(1-t^2)(1-t^4)^2(1-t^6)}&\text{\cite[Thm.~A,C]{Ibukiyama1991}}\\
  \midrule
   \Gamma_0(4)&\bullet&\frac{t^6(3+t^4+t^5-2t^6+t^9)}{(1-t^2)^3(1-t^6)}\\
  \midrule
   \Gamma_0^*(4)&\bullet&\frac{t^5(1+3t+t^3+t^4+2t^5+t^6-t^7-t^9+t^{10})}{(1-t^2)^3(1-t^6)}\\
  \midrule
   \Gamma_0'(2)&\bullet&\frac{t^8(1+t^2+t^3+t^4-t^5-t^6+t^7+2t^8+t^{11}-t^{14}+t^{15}+t^{16}-t^{17}-t^{18}+t^{19})}{(1-t^2)(1-t^4)(1-t^6)(1-t^{12})}&\text{\cite[Thm.~A.1]{Wakatsuki2013}}\\
  \midrule
   \Gamma_0'(4)&&\frac{t^7(1+3t+2t^2+9t^3+5t^4+13t^5+4t^6+6t^7+5t^8+4t^{10}-3t^{11}+2t^{12}-2t^{13}-2t^{15}+t^{16})}{(1-t^4)^2(1-t^6)^2}\\
  \midrule
   M(4)&\bullet&\frac{t^7(1+2t+2t^3+3t^4+4t^5-t^6+4t^8+5t^9+3t^{12}+2t^{13}-2t^{15}+2t^{16}+t^{17}-t^{18}-2t^{19}+t^{20})}{(1-t^2)(1-t^4)(1-t^6)(1-t^{12})}&\\
  \midrule
   B(2)&\bullet&\frac{t^6(1+t^2-t^4+t^5+t^6-t^7-t^8+t^9)}{(1-t^2)^2(1-t^4)^2}&\text{\cite[Thm.~B,C]{Ibukiyama1991}}\\
  \bottomrule
 \end{array}
\end{equation*}
\end{table}
\begin{table}
 \caption{Dimension formulas for cusp forms of Saito-Kurokawa type}\label{dimSkPtable}
 \begin{equation*}
  \renewcommand{\arraystretch}{1.9}
  \begin{array}{cc} 
   \toprule
    \Gamma&\displaystyle\sum_{k=0}^\infty\dim S_k^{\text{\bf(P)}}(\Gamma)t^k\\
   \toprule
    \Gamma(2)&\dfrac{t^5 (1 + t + t^2)(1 + 4 t + 10 t^3 - 5 t^4 + 10 t^5)}{(1-t^4)(1-t^6)}\\
   \midrule
    \Sp(4,\Z)&\dfrac{t^{10}}{(1-t^2)(1-t^{6})}\\
   \midrule
    \mathrm{K}(2)&\dfrac{t^8(1+t^2+t^3+t^4)}{(1-t^4)(1-t^6)}\\
   \midrule
    \mathrm{K}(4)&\dfrac{t^7(1 + t + t^2 + 2 t^3 + t^4 + 2 t^5)}{(1-t^4)(1-t^6)}\\
   \midrule
    \Gamma_0(2)&\dfrac{t^6(1+t^2+2t^4)}{(1-t^2)(1-t^6)}\\
   \midrule
    \Gamma_0(4)&\dfrac{t^6(3+3t^2+4t^4)}{(1-t^2)(1-t^6)}\\
   \midrule
    \Gamma_0^*(4)&\dfrac{t^5(1-t+t^2)(1 + 4t + 5t^2 + 4t^3)}{(1-t^2)(1-t^6)}\\
   \midrule
    \Gamma_0'(2)&\dfrac{t^8(1+t+t^2)(1-t+2t^2)}{(1-t^4)(1-t^6)}\\
   \midrule
    \Gamma_0'(4)&\dfrac{t^7(1 + 2 t + t^2 + 4 t^3 + 2 t^4 + 4 t^5)}{(1-t^4)(1-t^6)}\\
   \midrule
    M(4)&\dfrac{t^7(1+t+t^2)(1 + t - t^2 + 3 t^3)}{(1-t^4)(1-t^6)}\\
   \midrule	
    B(2)&\dfrac{t^6(1+t+t^2)(1 - t + 3 t^2 - 2t^3 + 3 t^4)}{(1-t^4)(1-t^6)}\\
   \bottomrule
  \end{array}
 \end{equation*}
\end{table}
\begin{table}
 \caption{Dimension formulas for cusp forms of general type}\label{dimSkGtable}
 \begin{equation*}
  \renewcommand{\arraystretch}{1.9}
  \begin{array}{cc} 
   \toprule
    \Gamma&\displaystyle\sum_{k=0}^\infty\dim S_k^{\text{\bf(G)}}(\Gamma)t^k\\
   \toprule
    \Gamma(2)&\dfrac{t^8(1+t+t^2)(10 - t + 12t^2 - 5t^3 + 2t^4 + 13t^5 - 16t^6+t^7)}{(1-t^2)^2(1-t^4) \left(1-t^6\right)}\\
   \midrule
    \!\!\!\Sp(4,\Z)\!\!\!&\dfrac{t^{20}(1+t^2+t^4-t^{12}-t^{14}+t^{15})}{(1-t^4)(1-t^6)(1-t^{10})(1-t^{12})}\\
   \midrule
    \mathrm{K}(2)&\dfrac{t^{16}(1+t^3+t^4+t^7+t^8-2t^9-2t^{10}+t^{11})}{(1-t^2)(1-t^4)(1-t^6)(1-t^{12})}\\
   \midrule
    \mathrm{K}(4)&\dfrac{t^{11}(1 + t + t^3 + t^4 + 2t^5 + 2t^8+2t^9+t^{12}+t^{13}-2t^{14}-3t^{15}+t^{16})}{(1-t^2)(1-t^4)(1-t^6)(1-t^{12})}\\
   \midrule
    \Gamma_0(2)&\dfrac{t^{12}(2+2t^2-t^4-2t^6+t^7)}{(1-t^2)(1-t^4)^2(1-t^6)}\\
   \midrule
    \Gamma_0(4)&\dfrac{t^8(3+t^3+3t^4-4t^6+t^7)}{(1-t^2)^3(1-t^6)}\\
   \midrule
    \Gamma_0^*(4)&\dfrac{t^8(4+3t+t^2+t^3+4t^4-t^5-5t^6+t^7)}{(1-t^2)^3(1-t^6)}\\
   \midrule
    \Gamma_0'(2)&\dfrac{t^{12}(1+t^2+t^3+2t^4+t^7+t^8+2t^{11}+t^{12}-2t^{13}-3t^{14}+t^{15})}{(1-t^2)(1-t^4)(1-t^6)(1-t^{12})}\\
   \midrule
    \!\!\!\!\!\!\Gamma_0'(4)&\!\!\!\!\!\!\!\!\dfrac{t^8(1 + t + 5 t^2 + 4 t^3 + 11 t^4 + 6 t^5 + 12 t^6 + 8 t^7 + 8 t^8 + 5 t^9 - t^{10} + t^{11} - \!6 t^{12} - \!2 t^{13} - \!6 t^{14} + t^{15})}{(1-t^4)^2(1-t^6)^2}\!\!\!\!\!\\
   \midrule
    M(4)&\!\!\dfrac{t^{10}(1+2t+4t^2+t^3+3t^4+4t^5+5t^6+4t^9+4t^{10}-t^{12}+3t^{13}+t^{14}-3t^{15}-5t^{16}+t^{17})}{(1-t^2)(1-t^4)(1-t^6)(1-t^{12})}\!\!\\
   \midrule	
    B(2)&\dfrac{t^{10}(1+t+t^2)(1-t+4t^2-2t^3+t^4+3t^5-5t^6+t^7)}{(1-t^2)(1-t^4)^2(1-t^6)}\\
   \bottomrule
  \end{array}
 \end{equation*}
\end{table}
\clearpage
\subsection{Dimensions for low weights}

\vspace{-3ex}
\begin{table}[hbt!]
 \caption{Dimensions for low weights: All modular forms}\vspace{-2ex}
 \label{Mklowweightstable}
\begin{equation*}\setlength{\arraycolsep}{1ex}
 \begin{array}{ccccccccccccccccccccc}
  \toprule
   &\multicolumn{20}{c}{\dim M_k(\Gamma)\text{ for }k=\ldots}\\
   \Gamma&1&2&3&4&5&6&7&8&9&10&11&12&13&14&15&16&17&18&19&20\\
  \toprule
   \Gamma(2)&0&5&0&15&1&35&5&69&15&121&35&195&69&295&121&425&195&589&295&791\\
  \midrule
   \!\!\!\Sp(4,\Z)\!\!\!&0&0&0&1&0&1&0&1&0&2&0&3&0&2&0&4&0&4&0&5\\
  \midrule
   K(2)&0&0&0&1&0&1&0&2&0&2&1&5&0&3&1&7&1&7&2&10\\
  \midrule
   K(4)&0&0&0&2&0&2&1&4&1&5&3&10&3&9&6&17&7&19&12&27\\
  \midrule
   \Gamma_0(2)&0&1&0&3&0&4&0&7&0&9&0&14&0&17&0&24&0&29&1&38\\
  \midrule
   \Gamma_0(4)&0&3&0&7&0&14&0&24&0&38&1&57&3&81&7&111&14&148&24&192\\
  \midrule
   \Gamma_0^*(4)&0&3&0&7&1&15&3&27&7&45&15&71&27&105&45&149&71&205&105&273\\
  \midrule
   \Gamma_0'(2)&0&0&0&2&0&2&0&4&0&5&1&10&0&9&2&17&2&19&4&26\\
  \midrule
   \Gamma_0'(4)&0&0&0&4&0&6&1&12&2&20&7&36&10&46&22&75&32&98&50&133\\
  \midrule
   M(4)&0&0&0&3&0&3&1&8&1&10&5&21&5&23&13&41&16&49&28&71\\
  \midrule
   B(2)&0&1&0&4&0&5&0&11&0&14&1&24&1&30&4&45&5&55&11&76\\
  \bottomrule
 \end{array}
\end{equation*}
\end{table}

\vspace{-2ex}
\begin{table}[hbt!]
 \caption{Dimensions for low weights: All cusp forms}\vspace{-2ex}
 \label{Sklowweightstable}
\begin{equation*}\setlength{\arraycolsep}{1ex}
 \begin{array}{ccccccccccccccccccccc}
  \toprule
   &\multicolumn{20}{c}{\dim S_k(\Gamma)\text{ for }k=\ldots}\\
   \Gamma&1&2&3&4&5&6&7&8&9&10&11&12&13&14&15&16&17&18&19&20\\
  \toprule
   \Gamma(2)&0&0&0&0&1&5&5&24&15&61&35&120&69&205&121&320&195&469&295&656\\
  \midrule
   \!\!\!\Sp(4,\Z)\!\!\!&0&0&0&0&0&0&0&0&0&1&0&1&0&1&0&2&0&2&0&3\\
  \midrule
   K(2)&0&0&0&0&0&0&0&1&0&1&1&2&0&2&1&4&1&4&2&7\\
  \midrule
   K(4)&0&0&0&0&0&0&1&1&1&2&3&4&3&5&6&10&7&12&12&19\\
  \midrule
   \Gamma_0(2)&0&0&0&0&0&1&0&2&0&4&0&7&0&10&0&15&0&20&1&27\\
  \midrule
   \Gamma_0(4)&0&0&0&0&0&3&0&9&0&19&1&34&3&54&7&80&14&113&24&153\\
  \midrule
   \Gamma_0^*(4)&0&0&0&0&1&3&3&10&7&23&15&44&27&73&45&112&71&163&105&226\\
  \midrule
   \Gamma_0'(2)&0&0&0&0&0&0&0&1&0&2&1&4&0&5&2&10&2&12&4&18\\
  \midrule
   \Gamma_0'(4)&0&0&0&0&0&0&1&3&2&9&7&19&10&30&22&53&32&74&50&106\\
  \midrule
   M(4)&0&0&0&0&0&0&1&2&1&4&5&10&5&14&13&27&16&35&28&54\\
  \midrule
   B(2)&0&0&0&0&0&1&0&3&0&6&1&12&1&18&4&29&5&39&11&56\\
  \bottomrule
 \end{array}
\end{equation*}
\end{table}

\clearpage
\begin{table}[hbt!]
 \caption{Dimensions for low weights: Cusp forms of Saito-Kurokawa type}
 \label{SkPlowweightstable}
\begin{equation*}\setlength{\arraycolsep}{1ex}
 \begin{array}{ccccccccccccccccccccc}
  \toprule
   &\multicolumn{20}{c}{\dim S_k^{\text{\bf(P)}}(\Gamma)\text{ for }k=\ldots}\\
   \Gamma&1&2&3&4&5&6&7&8&9&10&11&12&13&14&15&16&17&18&19&20\\
  \toprule
   \Gamma(2)&0&0&0&0&1&5&5&14&6&20&11&29&11&34&16&44&17&49&21&58\\
  \midrule
   \!\!\!\Sp(4,\Z)\!\!\!&0&0&0&0&0&0&0&0&0&1&0&1&0&1&0&2&0&2&0&2\\
  \midrule
   K(2)&0&0&0&0&0&0&0&1&0&1&1&2&0&2&1&3&1&3&1&4\\
  \midrule
   K(4)&0&0&0&0&0&0&1&1&1&2&2&3&2&3&3&5&3&5&4&6\\
  \midrule
   \Gamma_0(2)&0&0&0&0&0&1&0&2&0&4&0&5&0&6&0&8&0&9&0&10\\
  \midrule
   \Gamma_0(4)&0&0&0&0&0&3&0&6&0&10&0&13&0&16&0&20&0&23&0&26\\
  \midrule
   \Gamma_0^*(4)&0&0&0&0&1&3&3&6&4&10&5&13&7&16&8&20&9&23&11&26\\
  \midrule
   \Gamma_0'(2)&0&0&0&0&0&0&0&1&0&2&1&3&0&3&1&5&1&5&1&6\\
  \midrule
   \Gamma_0'(4)&0&0&0&0&0&0&1&2&1&4&3&6&2&6&4&10&4&10&5&12\\
  \midrule
   M(4)&0&0&0&0&0&0&1&2&1&3&3&5&2&5&4&8&4&8&5&10\\
  \midrule
   B(2)&0&0&0&0&0&1&0&3&0&5&1&7&0&8&1&11&1&12&1&14\\
  \bottomrule
 \end{array}
\end{equation*}
\end{table}

\begin{table}[hbt!]
 \caption{Dimensions for low weights: Cusp forms of general type}\vspace{-2ex}
 \label{SkGlowweightstable}
\begin{equation*}\setlength{\arraycolsep}{1ex}
 \begin{array}{ccccccccccccccccccccc}
  \toprule
   &\multicolumn{20}{c}{\dim S_k^{\text{\bf(G)}}(\Gamma)\text{ for }k=\ldots}\\
   \Gamma&1&2&3&4&5&6&7&8&9&10&11&12&13&14&15&16&17&18&19&20\\
  \toprule
   \Gamma(2)&0&0&0&0&0&0&0&10&9&41&24&91&58&171&105&276&178&420&274&598\\
  \midrule
   \!\!\!\Sp(4,\Z)\!\!\!&0&0&0&0&0&0&0&0&0&0&0&0&0&0&0&0&0&0&0&1\\
  \midrule
   K(2)&0&0&0&0&0&0&0&0&0&0&0&0&0&0&0&1&0&1&1&3\\
  \midrule
   K(4)&0&0&0&0&0&0&0&0&0&0&1&1&1&2&3&5&4&7&8&13\\
  \midrule
   \Gamma_0(2)&0&0&0&0&0&0&0&0&0&0&0&2&0&4&0&7&0&11&1&17\\
  \midrule
   \Gamma_0(4)&0&0&0&0&0&0&0&3&0&9&1&21&3&38&7&60&14&90&24&127\\
  \midrule
   \Gamma_0^*(4)&0&0&0&0&0&0&0&4&3&13&10&31&20&57&37&92&62&140&94&200\\
  \midrule
   \Gamma_0'(2)&0&0&0&0&0&0&0&0&0&0&0&1&0&2&1&5&1&7&3&12\\
  \midrule
   \Gamma_0'(4)&0&0&0&0&0&0&0&1&1&5&4&13&8&24&18&43&28&64&45&94\\
  \midrule
   M(4)&0&0&0&0&0&0&0&0&0&1&2&5&3&9&9&19&12&27&23&44\\
  \midrule
   B(2)&0&0&0&0&0&0&0&0&0&1&0&5&1&10&3&18&4&27&10&42\\
  \bottomrule
 \end{array}
\end{equation*}
\end{table}
\clearpage
\subsection{Number of automorphic representations}

\vspace{-1ex}
\begin{table}[hbt!]
 \caption{Number of automorphic representations: Saito-Kurokawa type}\label{skPOmegatable}\vspace{-1ex}
 \begin{equation*}
  \setlength{\arraycolsep}{2.0ex}\renewcommand{\arraystretch}{1.9}
  \begin{array}{cc|cc|cc} 
   \toprule
    \Omega&\displaystyle\sum_{k=0}^\infty s_k^{\text{\bf(P)}}(\Omega)t^k&\Omega&\displaystyle\sum_{k=0}^\infty s_k^{\text{\bf(P)}}(\Omega)t^k&\Omega&\displaystyle\sum_{k=0}^\infty s_k^{\text{\bf(P)}}(\Omega)t^k\\
   \toprule
    \text{IIb}&\dfrac{t^{10}}{(1-t^2)(1-t^{6})}&\text{VIb}&\dfrac{t^6+t^8-t^{12}}{(1-t^4)(1-t^6)}&\text{XIb}&\dfrac{t^7}{(1-t^2)(1-t^6)}\\
   \midrule
    \text{Vb}&\dfrac{t^8}{(1-t^4)(1-t^6)}&\text{VIc}&\dfrac{t^{11}}{(1-t^4)(1-t^6)}&\text{Va}^*&\dfrac{t^5}{(1-t^4)(1-t^6)}\\
   \bottomrule
  \end{array}
 \end{equation*}
\end{table}

\vspace{-2ex}
\begin{table}[hbt!]
 \caption{Number of automorphic representations: general type}\label{skGOmegatable}\vspace{-3ex}
 \begin{equation*}
  \renewcommand{\arraystretch}{1.9}
  \begin{array}{cc} 
   \toprule
    \Omega&\displaystyle\sum_{k=0}^\infty s_k^{\text{\bf(G)}}(\Omega)t^k\\
   \toprule
    \text{I}&\dfrac{t^{20}(1+t^2+t^4-t^{12}-t^{14}+t^{15})}{(1-t^4)(1-t^6)(1-t^{10})(1-t^{12})}\\
   \midrule
    \text{IIa}&\dfrac{t^{16}(1+t^2+t^3-t^4-t^6)}{(1-t^4)^2(1-t^5)(1-t^6)}\\
   \midrule
    \!\!\text{IIIa+VIa/b}\!\!&\dfrac{t^{12}(1 + 2 t^2 + 2 t^4 - t^5 + 2 t^6 - 2 t^7 + t^8 - 2 t^9 + t^{10} - 2 t^{11} + t^{14} + t^{16} + t^{17} - t^{18})}{(1-t^4)(1-t^5)(1-t^6)(1-t^{12})}\\
   \midrule
    \text{IVa}&\dfrac{t^{10}(1 + t^2 + t^3 + t^4 - t^8 + t^9 + 2 t^{10} + 
 2 t^{11} + t^{12} + t^{13} - t^{14} - t^{15} - t^{16} - t^{17} + t^{20})}{(1-t^4)(1-t^5)(1-t^6)(1-t^{12})}\\
   \midrule
    \text{Va/a}^*&\dfrac{t^{15}(1 + t^2 - t^5 - t^7 + t^{10})}{(1-t^4)(1-t^5)(1-t^6)(1-t^{12})}\\
   \midrule
    \!\!\text{VII+VIIIa/b}\!\!&\dfrac{t^{10}(1+t^2-t^6+t^7)}{(1-t^4)^2(1-t^6)^2}\\
   \midrule
    \text{IXa}&\dfrac{t^8(1+t^{11})}{(1-t^2)(1-t^4)(1-t^6)(1-t^{12})}\\
   \midrule
    \text{X}&\dfrac{t^{11}(1+t^8+t^9-t^{12})}{(1-t^2)(1-t^4)(1-t^6)(1-t^{12})}\\
   \midrule
    \text{XIa}&\dfrac{t^{12}(1 + t^3 + t^4 - t^7 - t^8 + t^{11})}{(1-t^2)(1-t^4)(1-t^6)(1-t^{12})}\\
   \midrule
    \text{sc(16)}&\dfrac{t^9}{(1-t^2)(1-t^4)^2(1-t^5)}\\
   \bottomrule
  \end{array}
 \end{equation*}
\end{table}

\begin{table}[hbt!]
 \caption{Number of automorphic representations for low weights}\vspace{-2ex}
 \label{SkOmegalowweightstable}
\begin{equation*}\setlength{\arraycolsep}{1ex}
 \begin{array}{cccccccccccccccccccccc}
  \toprule
   &&\multicolumn{20}{c}{s_k^{\text{\bf(P)}}(\Omega)\text{ or }s_k^{\text{\bf(G)}}(\Omega)\text{ for }k=\ldots}\\
   &\Omega&1&2&3&4&5&6&7&8&9&10&11&12&13&14&15&16&17&18&19&20\\
  \toprule
   \text{\bf(P)}&\text{IIb}&0&0&0&0&0&0&0&0&0&1&0&1&0&1&0&2&0&2&0&2\\
  \cmidrule{2-22}
   &\text{Vb}&0&0&0&0&0&0&0&1&0&0&0&1&0&1&0&1&0&1&0&2\\
  \cmidrule{2-22}
   &\text{VIb}&0&0&0&0&0&1&0&1&0&1&0&1&0&2&0&1&0&2&0&2\\
  \cmidrule{2-22}
   &\text{VIc}&0&0&0&0&0&0&0&0&0&0&1&0&0&0&1&0&1&0&1&0\\
  \cmidrule{2-22}
   &\text{XIb}&0&0&0&0&0&0&1&0&1&0&1&0&2&0&2&0&2&0&3&0\\
  \cmidrule{2-22}
   &\text{Va}^*&0&0&0&0&1&0&0&0&1&0&1&0&1&0&1&0&2&0&1&0\\
  \midrule
   \text{\bf(G)}&\text{I}&0&0&0&0&0&0&0&0&0&0&0&0&0&0&0&0&0&0&0&1\\
  \cmidrule{2-22}
   &\text{IIa}&0&0&0&0&0&0&0&0&0&0&0&0&0&0&0&1&0&1&1&1\\
  \cmidrule{2-22}
   &\text{IIIa+VIa/b}&0&0&0&0&0&0&0&0&0&0&0&1&0&2&0&3&0&5&0&6\\
  \cmidrule{2-22}
   &\text{IVa}&0&0&0&0&0&0&0&0&0&1&0&1&1&2&1&2&2&3&4&6\\
  \cmidrule{2-22}
   &\text{Va/a}^*&0&0&0&0&0&0&0&0&0&0&0&0&0&0&1&0&1&0&1&0\\
  \cmidrule{2-22}
   &\text{VII+VIIIa/b}&0&0&0&0&0&0&0&0&0&1&0&1&0&2&0&3&1&5&0&5\\
  \cmidrule{2-22}
   &\text{IXa}&0&0&0&0&0&0&0&1&0&1&0&2&0&3&0&4&0&5&1&8\\
  \cmidrule{2-22}
   &\text{X}&0&0&0&0&0&0&0&0&0&0&1&0&1&0&2&0&3&0&5&1\\
  \cmidrule{2-22}
   &\text{XIa}&0&0&0&0&0&0&0&0&0&0&0&1&0&1&1&3&1&4&1&5\\
  \cmidrule{2-22}
   &\text{sc(16)}&0&0&0&0&0&0&0&0&1&0&1&0&3&1&3&1&6&3&7&3\\
  \bottomrule
 \end{array}
\end{equation*}
\end{table}

\end{appendix}

	\bibliographystyle{plain}
	\bibliography{dimKl4.bib}
	
	\vspace{5ex}
	\noindent Department of Mathematics, Lafayette College, Easton, Pennsylvania, USA.

\noindent E-mail address: {\tt royma@lafayette.edu}	
	
	\vspace{2ex}
	\noindent Department of Mathematics, University of North Texas, Denton, TX 76203-5017, USA.
	
	\noindent E-mail address: {\tt ralf.schmidt@unt.edu}
	
	\vspace{2ex}
	\noindent School of Mathematical Sciences, Xiamen University, Xiamen, Fujian 361005, China.
	
	\noindent E-mail address: {\tt yishaoyun926@xmu.edu.cn}
\end{document}